\documentclass[12pt]{amsart}
\usepackage{txfonts}      
\usepackage{amssymb}
\usepackage{eucal}
\usepackage{amsmath}
\usepackage{amscd}
\usepackage{xcolor}
\usepackage{multicol}
\usepackage[all]{xy}           
\usepackage{graphicx}
\usepackage{color}
\usepackage{colordvi}
\usepackage{xspace}
\usepackage{tikz}
\usepackage{makecell}
\usepackage{appendix}
\usepackage{enumerate,enumitem}
\usepackage{amsthm}
\usepackage[thicklines]{cancel}
\usepackage[compress]{cite}
\usepackage{ifpdf}
\ifpdf
\usepackage[colorlinks,final,backref=page,hyperindex]{hyperref}
\else
\usepackage[colorlinks,final,backref=page,hyperindex,hypertex]{hyperref}
\fi

\usepackage[active]{srcltx} 
\topmargin -.8cm \textheight 21.6cm \oddsidemargin 0cm
\evensidemargin -0cm \textwidth 16cm

\newtheorem{thm}{Theorem}[section]
\newtheorem{lem}[thm]{Lemma}
\newtheorem{cor}[thm]{Corollary}
\newtheorem{pro}[thm]{Proposition}
\theoremstyle{definition}
\newtheorem{defi}[thm]{Definition}

\newtheorem{rmk}[thm]{Remark}


\title[Quasi-triangular, factorizable anti-dendriform bialgebras and relative Rota-Baxter operators]
 {Quasi-triangular, factorizable anti-dendriform bialgebras and relative Rota-Baxter operators }

\author{Qinxiu Sun}
\address{Department of Mathematics, Zhejiang University of Science and Technology, Hangzhou, 310023} \email{qxsun@126.com}

\author{Min Wu}
\address{Department of Mathematics, Zhejiang University of Science and Technology, Hangzhou, 310023}
         \email{wuminzju@163.com}
         
\subjclass[2020]{17A30, 17A36, 17B38, 17B40, 16T10} 

\keywords{anti-dendriform algebra, quasi-triangular anti-dendriform
 bialgebra, factorizable anti-dendriform bialgebra, relative Rota-Baxter operator}
\begin{document}
\begin{abstract}
We introduce the notion of quasi-triangular anti-dendriform bialgebras, which can be induced by
the solutions of the AD-YBE whose symmetric parts are invariant. A factorizable anti-dendriform bialgebra 
leads to a factorization
of the underlying anti-dendriform algebra. Moreover, relative Rota-Baxter operators with
 weights are introduced to characterize the solutions of the AD-YBE whose symmetric 
 parts are invariant. Finally, we interpret factorizable 
anti-dendriform bialgebras in terms of quadratic Rota-Baxter anti-dendriform algebras.

\end{abstract}

\maketitle

\vspace{-1.2cm}

\tableofcontents

\vspace{-1.2cm}

\allowdisplaybreaks

\section{Introduction}  
Anti-dendriform algebras were first studied by Gao, Liu and Bai in \cite{15}, which provide a decomposition of associativity.
However, unlike dendriform algebras where the left and right multiplication operators form the bimodules 
for the underlying associative algebra, 
it is the negative left and right multiplication operators that constitute the bimodules 
for the underlying sum-associative algebra in the anti-dendriform case.
 Moreover, anti-dendriform algebras exhibit a close relationship 
with anti-pre-Lie algebras. Investigations into anti-dendriform algebras and their 
connections to Novikov-type algebras establish a foundational framework for 
understanding anti-pre-Lie algebras \cite{15}. Explicitly, these relationships are captured by the following commutative diagram:
\begin{displaymath}
\xymatrix{
 \text{anti-dendriform algebras} \ar[d] \ar[r] & \text{anti-pre-Lie algebras}  \ar[d] \\
\text{associative algebras} \ar[r] & \text{Lie algebras.}}
\end{displaymath}

A bialgebra structure on an algebraic structure is defined by equipping it with a comultiplication
 that satisfies compatibility conditions with the multiplication. 
 Lie bialgebras \cite{8} introduced by Drinfeld in the early 1980s, which
 are closely related to the classical Yang-Baxter equation and play an important role 
 in the infinitesimal description of quantum groups. 
 Extending Drinfeld’s ideas, V. Zhelyabin introduced the notion of associative D-bialgebras \cite{32,33}.
  The associative analog of the Lie bialgebra
is antisymmetric infinitesimal bialgebras \cite{1}, which were interpreted in terms of 
 double constructions of Frobenius algebras \cite{4}.  
Analogous methods have been extended in recent years to construct bialgebra theories 
for numerous other algebraic structures, such as
pre-Lie algebras \cite{3}, dendriform algebras\cite{4}, 3-Lie algebras \cite{5}, Leibniz
algebras \cite{28}, Novikov algebra \cite{16}, perm algebras \cite{19}, 
pre-Novikov algebra \cite{22} and anti-pre-Lie algebras \cite{24}. 

Within the theory of Lie bialgebras, coboundary Lie bialgebras especially quasi-triangular Lie bialgebras play important roles
in mathematical physics. Factorizable Lie bialgebras \cite{022} constitute a specialized subclass of 
quasi-triangular Lie bialgebras that bridge classical $r$-matrices with certain factorization problems. 
 These structures find diverse applications in integrable systems \cite{102,103}. 
 Recently, these foundational results have been extended to 
 antisymmetric infinitesimal bialgebras \cite{029}, pre-Lie bialgebras \cite{033} and Leibniz bialgebras \cite{07}.

We studied the bialgebra theory for
anti-dendriform algebras in \cite{030}, characterized by double constructions of associative algebras via commutative Connes cocycles 
and certain matched pairs of 
associative algebras as well as anti-dendriform algebras. 
While a skew-symmetric solution of the anti-dendriform Yang-Baxter
equation (AD-YBE) yields a coboundary anti-dendriform bialgebra \cite{030}, this paper investigates
how solutions without skew-symmetry induce anti-dendriform bialgebras. We verify that the solutions of the AD-YBE
whose symmetric parts are invariant gives rise to quasi-triangular anti-dendriform bialgebras. Furthermore, 
Factorizable anti-dendriform bialgebras as a special class of quasi-triangular anti-dendriform bialgebras 
 are considered. The double space of any anti-dendriform bialgebra has a factorizable anti-dendriform bialgebra
structure. We demonstrate that the solutions of the AD-YBE
whose symmetric parts are invariant in terms of relative Rota-Baxter operators on anti-dendriform algebras.

The paper is organized as follows. In Section 2, we recall some known facts about anti-dendriform 
algebras and anti-dendriform bialgebras. In Section 3, we introduce the notion of quasi-triangular 
anti-dendriform bialgebras, which can be induced by solutions of the AD-YBE whose symmetric parts are invariant. 
In Section 4, we investigate factorizable anti-dendriform bialgebras, a special class of quasi-triangular anti-dendriform bialgebras.
 A factorizable anti-dendriform bialgebra admits a factorization of the underlying anti-dendriform algebra, 
 and the double of an anti-dendriform bialgebra naturally forms a factorizable anti-dendriform bialgebra. 
 In Section 5, we introduce relative Rota-Baxter operators with weights to interpret solutions of 
 the AD-YBE whose symmetric parts are invariant. Moreover, 
 we show that there exists a one-to-one correspondence between 
 factorizable anti-dendriform bialgebras and quadratic Rota-Baxter anti-dendriform algebras.

Throughout the paper, $k$ is a field.  All vector spaces and algebras are over $k$. 
 All algebras are finite-dimensional, although many results still hold in the infinite-dimensional case.

\section{Anti-dendriform algebras and bialgebras}

Let us begin to recall some basic knowledge on anti-dendriform algebras and bialgebras.

An {\bf anti-dendriform algebra}  is a vector space $A$ together with two 
bilinear maps $\succ,\prec:A\otimes A\longrightarrow A$ such that
 \begin{align}&\label{A1}x\succ (y\succ z)=-(x\cdot y)\succ z=-x\prec (y\cdot z)=(x\prec y)\prec z,\\&
 \label{A2}(x\succ y)\prec z=x\succ (y\prec z),\end{align} 
  for all $x,y,z\in A$, where $x\cdot y= x\succ y+x\prec y$. Note that $(A,\cdot)$ is 
  an associative algebra, which is called the \textbf{associated associative 
  algebra} of the anti-dendriform algebra $(A,\succ,\prec)$.
  While $(A,\succ,\prec)$ is called the \textbf{compatible 
  anti-dendriform algebra} structure on $(A,\cdot)$.
   Further details on anti-dendriform algebras can be found in \cite{15}.
 
\begin{defi} \cite{030} A {\bf representation (bimodule)} of an anti-dendriform algebra $(A,\succ,\prec)$ is a quintuple
  $(V,l_{\succ}, r_{\succ},l_{\prec},r_{\prec})$, 
 where $V$ is a
vector space and $l_{\succ}, r_{\succ},l_{\prec},r_{\prec} : A \longrightarrow \hbox{End}
(V)$ are linear maps satisfying the following conditions:
\begin{align}&\label{R1}l_{\succ}(x)l_{\succ}(y)=-l_{\succ}(x\cdot y)=-l_{\prec}(x)l_{\cdot}(y)=l_{\prec}(x\prec y),\\&
\label{R2}r_{\succ}(x\succ y)=-r_{\succ}(y)r_{\cdot}(x)=-r_{\prec}(x\cdot y)=r_{\prec}(y)r_{\prec}(x),\\&
\label{R3}l_{\succ}(x)r_{\succ}(y)=-r_{\succ}(y)l_{\cdot}(x)=-l_{\prec}(x)r_{\cdot}(y)=r_{\prec}(y)l_{\prec} (x),\\&
\label{R4}l_{\prec}(x \succ y)=l_{\succ}(x)l_{\prec}(y),\ \ \
r_{\prec}(y)r_{\succ}(x)=r_{\succ}(x\prec y),\ \ \
r_{\prec}(y)l_{\succ}(x)=l_{\succ}(x)r_{\prec} (y),\end{align}
where $l_{\cdot}(x)=l_{\succ}(x)+l_{\prec}(x),~r_{\cdot}(x)=r_{\succ}(x)+r_{\prec}(x)$ and $x\cdot y=x\succ y+x\prec y$ for all
$x,y\in A$.
\end{defi}

Let $A$ and $V$ be vector spaces. For
a linear map $f: A \longrightarrow \hbox{End} (V)$, define a linear
map $f^{*}: A \longrightarrow \hbox{End} (V^{*})$ by $\langle
f^{*}(x)u^{*},v\rangle=\langle u^{*},f(x)v\rangle$ for all $x\in A,
u^{*}\in V^{*}, v\in V$, where $\langle \ , \ \rangle$ is the usual
pairing between $V$ and $V^{*}$.

\begin{pro} \cite{030} \label{zr} Let $(A,\succ,\prec)$ be an anti-dendriform algebra
and $(A,\cdot)$ be its associated associative algebra. Assume that
$(V,l_{\succ}, r_{\succ},l_{\prec},r_{\prec})$ is a
representation of $(A,\succ,\prec)$. Then
\begin{enumerate}
	\item $(V,-l_{\succ},-r_{\prec})$ is a representation of $(A,\cdot)$.
 \item $(V,l_{\succ}+l_{\prec},r_{\succ}+r_{\prec})$ is a representation of $(A,\cdot)$.
	\item $(V^{*},-(r_{\prec}^{*}+r_{\succ}^{*}),l_{\prec}^{*},r_{\succ}^{*},-(l_{\prec}^{*}+l_{\succ}^{*}))$ is also a
	representation of $(A,\succ,\prec)$. We call it the {\bf dual representation}.
	\item $(V^{*},-r_{\prec}^{*},-l_{\succ}^{*})$ is a
	representation of $(A,\cdot)$.
\item $(V^{*},-(r_{\prec}^{*}+r_{\succ}^{*}),-(l_{\prec}^{*}+l_{\succ}^{*}))$ is a
	representation of $(A,\cdot)$. 
\end{enumerate}
\end{pro}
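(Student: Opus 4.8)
The plan is to verify clauses (a), (b), (c) by direct computation from the bimodule axioms \eqref{R1}--\eqref{R4}, and then to obtain (d) and (e) by applying (a) and (b) to the dual representation constructed in (c). As setup I would record that a representation $(W,\lambda,\rho)$ of an associative algebra $(B,\cdot)$ is precisely a triple satisfying $\lambda(x)\lambda(y)=\lambda(x\cdot y)$, $\rho(x\cdot y)=\rho(y)\rho(x)$, $\lambda(x)\rho(y)=\rho(y)\lambda(x)$; that transposition reverses composition, $(fg)^{*}=g^{*}f^{*}$, so that $(W^{*},\rho^{*},\lambda^{*})$ is again a representation of $(B,\cdot)$; and I would read off from \eqref{R1}--\eqref{R2} the consequences $l_{\succ}(x\cdot y)=-l_{\prec}(x\prec y)$, $l_{\prec}(x)l_{\cdot}(y)=-l_{\prec}(x\prec y)$, $r_{\succ}(x\succ y)=-r_{\prec}(x\cdot y)$, $r_{\succ}(y)r_{\cdot}(x)=-r_{\succ}(x\succ y)$, $r_{\cdot}(x\cdot y)=r_{\succ}(x\prec y)$ and their left--right mirror images, each obtained either by comparing two members of the relevant four--term equality or by inserting $x\cdot y=x\succ y+x\prec y$.

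Clause (a) is then immediate: the first equality of \eqref{R1} is exactly $(-l_{\succ}(x))(-l_{\succ}(y))=(-l_{\succ})(x\cdot y)$, the last equality of \eqref{R2} is $(-r_{\prec})(x\cdot y)=(-r_{\prec}(y))(-r_{\prec}(x))$, and the last identity of \eqref{R4} is the compatibility $(-l_{\succ}(x))(-r_{\prec}(y))=(-r_{\prec}(y))(-l_{\succ}(x))$. For (b), expanding $l_{\cdot}(x)l_{\cdot}(y)=(l_{\succ}(x)+l_{\prec}(x))(l_{\succ}(y)+l_{\prec}(y))$ into four summands and applying \eqref{R1} and \eqref{R4} collapses it to $-l_{\succ}(x\cdot y)+l_{\prec}(x\succ y)-l_{\prec}(x\prec y)$, which equals $l_{\cdot}(x\cdot y)$ once $l_{\succ}(x\cdot y)=-l_{\prec}(x\prec y)$ is used; the right--action identity is the mirror computation using \eqref{R2} and \eqref{R4}, and the compatibility follows from \eqref{R3} in the same way, with both $l_{\cdot}(x)r_{\cdot}(y)$ and $r_{\cdot}(y)l_{\cdot}(x)$ reducing to $r_{\prec}(y)l_{\succ}(x)$.

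Clause (c) is where the work concentrates. Writing the candidate as $(V^{*},\tilde l_{\succ},\tilde r_{\succ},\tilde l_{\prec},\tilde r_{\prec})$ with $\tilde l_{\succ}=-(r_{\prec}^{*}+r_{\succ}^{*})$, $\tilde r_{\succ}=l_{\prec}^{*}$, $\tilde l_{\prec}=r_{\succ}^{*}$, $\tilde r_{\prec}=-(l_{\prec}^{*}+l_{\succ}^{*})$, one has $\tilde l_{\cdot}=-r_{\prec}^{*}$ and $\tilde r_{\cdot}=-l_{\succ}^{*}$. Each scalar equality making up \eqref{R1}--\eqref{R4} for this quintuple, after using $(fg)^{*}=g^{*}f^{*}$ and cancelling the matching signs, becomes the transpose of an equality among the original operators that is either one of \eqref{R1}--\eqref{R4} or one of the derived identities above; for example, one member of \eqref{R1} reduces to $r_{\cdot}(x\cdot y)=r_{\succ}(x\prec y)$, and the last member of \eqref{R4} reduces to the compatibility $r_{\cdot}(x)l_{\cdot}(y)=l_{\cdot}(y)r_{\cdot}(x)$ proved in (b). I expect this to be the main obstacle: it is the one genuinely long verification, and the bookkeeping of the four operators, the order reversals produced by transposition, and the minus signs must be organised carefully, one displayed relation at a time.

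Finally, (d) and (e) require no further computation. Since $(V^{*},\tilde l_{\succ},\tilde r_{\succ},\tilde l_{\prec},\tilde r_{\prec})$ is a representation of $(A,\succ,\prec)$ by (c), clause (b) applied to it gives that $(V^{*},\tilde l_{\cdot},\tilde r_{\cdot})=(V^{*},-r_{\prec}^{*},-l_{\succ}^{*})$ is a representation of $(A,\cdot)$, which is (d), and clause (a) applied to it gives that $(V^{*},-\tilde l_{\succ},-\tilde r_{\prec})=(V^{*},r_{\prec}^{*}+r_{\succ}^{*},l_{\prec}^{*}+l_{\succ}^{*})$ is a representation of $(A,\cdot)$, which is (e); equivalently, (d) and (e) are the transpose duals of the representations in (a) and (b).
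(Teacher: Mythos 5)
The paper itself offers no proof of Proposition \ref{zr} (it is quoted from \cite{030}), so your argument can only be judged on its own terms. Your overall strategy is the natural one and is essentially correct for items (a)--(d): (a) is a literal re-reading of the first equality of Eq.~(\ref{R1}), the last equality of Eq.~(\ref{R2}) and the last identity of Eq.~(\ref{R4}); your expansion in (b) and the derived identities such as $l_{\succ}(x\cdot y)=-l_{\prec}(x\prec y)$ and $r_{\cdot}(x\cdot y)=r_{\succ}(x\prec y)$ all check out; the reduction of (c) to transposes of Eqs.~(\ref{R1})--(\ref{R4}) and of the identities from (b) is the right bookkeeping; and (d) is exactly (b) applied to the dual quintuple, since $\tilde l_{\cdot}=-r_{\prec}^{*}$ and $\tilde r_{\cdot}=-l_{\succ}^{*}$.

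The gap is in item (e). Applying (a) to the dual representation gives, as you yourself compute, that $(V^{*},\,r_{\prec}^{*}+r_{\succ}^{*},\,l_{\prec}^{*}+l_{\succ}^{*})$ is a representation of $(A,\cdot)$ --- but the statement of (e) asserts this for $(V^{*},\,-(r_{\prec}^{*}+r_{\succ}^{*}),\,-(l_{\prec}^{*}+l_{\succ}^{*}))$, and you simply declare your result to be (e). These two claims are not interchangeable: for an associative algebra, if $(W,\lambda,\rho)$ satisfies $\lambda(x\cdot y)=\lambda(x)\lambda(y)$ then $(W,-\lambda,-\rho)$ satisfies $-\lambda(x\cdot y)=\lambda(x)\lambda(y)$, so both can hold only when $\lambda(x\cdot y)=0$ for all $x,y$, which fails in general (e.g.\ for the regular representation, since $x\cdot(y\cdot z)=x\succ(y\prec z)$ need not vanish). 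Concretely, $(-r_{\cdot}^{*})(x\cdot y)=-(r_{\cdot}(y)r_{\cdot}(x))^{*}=-r_{\cdot}^{*}(x)r_{\cdot}^{*}(y)=-(-r_{\cdot}^{*}(x))(-r_{\cdot}^{*}(y))$, which is the wrong sign for the left-module axiom. So either item (e) as printed carries a sign error (the correct statement being the plus version, i.e.\ the standard dual of (b), matching the pattern by which (d) is the standard dual of (a)), or your proof does not establish what is claimed. You need to either prove the minus version directly (which, by the above, cannot be done in general) or explicitly record that your argument yields the statement with the opposite sign and that the two are not equivalent.
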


\begin{defi} \cite{030} \label{DB1} An anti-dendriform bialgebra is a quintuple $(A,\succ,\prec,\Delta_{\succ},\Delta_{\prec})$
such that $(A,\succ,\prec)$ is an anti-dendriform algebra, $(A,\Delta_{\succ},\Delta_{\prec})$ is 
an anti-dendriform coalgebra and the following compatible conditions hold:
\begin{equation*}\Delta_{\prec}(x\cdot y)=(R_{\cdot}(y)\otimes I)\Delta_{\prec}(x)-(I\otimes L_{\succ} (x))\Delta_{\prec}(y),\end{equation*}
\begin{equation*}\Delta_{\succ}(x\cdot y)=(I\otimes L_{\cdot}(x))\Delta_{\succ}(y)-(R_{\prec} (y)\otimes I)\Delta_{\succ}(x),\end{equation*}
\begin{equation*}\Delta(x\prec y)=(R_{\prec} (y)\otimes I)\Delta(x)-(I\otimes L_{\prec} (x))\Delta_{\succ}(y),\end{equation*}
\begin{equation*}\Delta(x\succ y)=(I\otimes L_{\succ} (x))\Delta(y)-(R_{\succ} (y)\otimes I)\Delta_{\prec}(x),\end{equation*}
\begin{equation*} (L_{\succ} (x)\otimes I)\Delta(y)+(R_{\succ} (y)\otimes I)\tau\Delta_{\succ}(x)
-(I\otimes L_{\prec} (y))\tau\Delta_{\prec}(x)-(I\otimes R_{\prec} (x))\Delta(y)=0,\end{equation*}
\begin{equation*}(I\otimes R_{\cdot}(y))\Delta_{\succ}(x)+(L_{\succ} (y)\otimes I)\Delta_{\succ}(x)
-\tau(I\otimes R_{\prec} (x))\Delta_{\prec}(y)-\tau
(L_{\cdot}(x)\otimes I)\Delta_{\prec}(y)=0,\end{equation*}
where $\cdot=\succ+\prec,~\Delta=\Delta_{\succ}+\Delta_{\prec}$ and $R_{\cdot}=R_{\prec}+R_{\succ},~L_{\cdot}=L_{\prec}+L_{\succ}$.
\end{defi}

\begin{rmk} $(A,\Delta_{\succ},\Delta_{\prec})$ is 
an anti-dendriform coalgebra 
 if and only if $(A^{*},\succ_{A^{*}},\prec_{A^{*}})$
 is an anti-dendriform algebra, where $\succ_{A^{*}},\prec_{A^{*}}$ are the linear dual of $\Delta_{\succ},\Delta_{\prec}$
 respectively, that is,
 \begin{align}&\label{Dc1}\langle \Delta_{\succ}(x),\zeta\otimes \eta\rangle=\langle x,\zeta\succ_{A^{*}} \eta\rangle
\\&\label{Dc2}\langle \Delta_{\prec}(x),\zeta\otimes \eta\rangle=\langle x,\zeta\prec_{A^{*}} \eta\rangle,~\forall~x\in A,\zeta,\eta\in A^{*}.
\end{align}
 Therefore, an anti-dendriform bialgebra $(A,\succ,\prec,\Delta_{\succ},\Delta_{\prec})$ is sometimes
denoted by $(A,\succ,\prec,A^{*},$ \ \ \ $\succ_{A^{*}},\prec_{A^{*}})$, where the anti-dendriform
 algebra structure $(A^{*},\succ_{A^{*}},\prec_{A^{*}})$
 on the dual space $A^{*}$
corresponds to the anti-dendriform coalgebra $(A,\Delta_{\succ},\Delta_{\prec})$
through Eqs.~(\ref{Dc1})-(\ref{Dc2}).
\end{rmk}

Let $(A, \succ_{A},\prec_{A},\Delta_{\prec}, \Delta_{\succ})$ be
 an anti-dendriform
bialgebra. Then $(D=A\oplus A^{*},\succ_{D},\prec_{D})$
 is an anti-dendriform algebra, where
 \begin{align}\label{Db1}(x+a)\succ_{D}(y+b)=&x\succ_A y-(R_{\prec_{A^{*}}}^{*}+R_{\succ_{A^{*}}}^{*})(a)y
 +L_{\prec_{A^{*}}}^{*}(b)x
 \\&+a\succ_{A^{*}}b-(R_{\prec_A}^{*}+R_{\succ_A}^{*})(x)b+L_{\prec_A}^{*}(y)a\nonumber
,\end{align}
\begin{align}\label{Db2}
(x+a)\prec_{D}(y+b)=&x\prec_A y+R_{\succ_{A^*}}^{*}
(a)y-(L_{\prec_{A^{*}}}^{*}+L_{\succ_{A^{*}}}^{*}(b)x\\&+a\prec_{A^{*}}b+R_{\succ_A}^{*}
(x)b-(L_{\prec_A}^{*}+L_{\succ_A}^{*})(y)a\nonumber,
\end{align}
for all $x,y\in A,a,b\in A^{*}$.
$(D=A\oplus A^{*},\succ_{D},\prec_{D})$ is called the double anti-dendriform algebra. 

Let $W$ be a vector space with a binary operation $\ast$. Suppose that
 $r=\sum\limits_{i}a_i\otimes b_i \in W\otimes W$. Put
\begin{eqnarray*}
r_{12}\ast r_{13}:=\sum_{i,j}a_i\ast a_j\otimes b_i\otimes b_j,\;r_{23}\ast r_{12}:=\sum_{i,j}a_j\otimes a_i\ast b_j\otimes b_i,\;
r_{31}\ast r_{23}:=\sum_{i,j}b_i\otimes a_j\otimes a_i\ast b_j,\\
r_{21}\ast r_{13}:=\sum_{i,j}b_i\ast a_j\otimes a_i\otimes b_j,\;
r_{32}\ast r_{21}:=\sum_{i,j}b_j\otimes b_i\ast a_j\otimes a_i,\;
r_{31}\ast r_{32}:=\sum_{i,j}b_i\otimes b_j\otimes a_i\ast a_j,\\
r_{13}\ast r_{32}:=\sum_{i,j}a_i\otimes b_j\otimes b_i\ast a_j,\;
r_{23}\ast r_{21}:=\sum_{i,j}b_j\otimes a_i\ast a_j\otimes b_i,\;
r_{21}\ast r_{31}:=\sum_{i,j}b_i\ast b_j\otimes a_i\otimes a_j,\\
r_{23}\ast r_{13}:=\sum_{i,j}a_i \otimes a_j \otimes b_i\ast b_j,\;
r_{12}\ast r_{31}:=\sum_{i,j}a_i\ast b_j\otimes b_i\otimes a_j.
\end{eqnarray*}

\begin{thm} \cite{030} \label{YE3} Let $(A,\succ,\prec)$ be an anti-dendriform algebra and 
$r=\sum_{i}a_i\otimes b_i\in A\otimes A$. Assume that
$\Delta_{\succ,r},\Delta_{\prec,r}$ defined by 
\begin{align}&\label{CD1}
\Delta_{\succ,r}(x)=-(R_{\prec}(x)\otimes I+I\otimes L_{\cdot}(x))r,
\\&\label{CD2}\Delta_{\prec,r}(x)=(R_{\cdot}(x)\otimes I+I\otimes L_{\succ}(x))r.
\end{align}
Then $(A,\succ,\prec,\Delta_{\succ,r},\Delta_{\prec,r})$ is an anti-dendriform bialgebra 
if and only if the following equations hold:
\begin{align}\label{CD3}[L_{\succ}( x)R_{\succ}(y)\otimes I-R_{\succ}(y)\otimes R_{\prec}( x)+I\otimes R_{\prec}(x)L_{\prec}(y)
-L_{\succ}(x)\otimes L_{\prec}( y)](r+\tau (r))=0,\end{align}
\begin{equation}\label{CD4}(R_{\prec}(x)\otimes I+I\otimes L_{\cdot}(x))(L_{\succ}(y)\otimes I+I\otimes R_{\cdot}(y))(r+\tau (r))=0,\end{equation}
\begin{align}\label{CD5}(R_{\prec}(x)\otimes I \otimes I-I\otimes I\otimes L_{\succ}(x))(r_{12}\prec r_{13}+r_{23}\cdot r_{12}+r_{13}\succ r_{23})
=0,\end{align}
\begin{align}\label{CD6}
(I\otimes I\otimes L_{\cdot}(x)+R_{\prec}(x)\otimes I \otimes I)(r_{13}\cdot r_{23}-r_{12}\succ r_{13}+r_{23}\prec r_{12})=0,\end{align}
\begin{align}\label{CD7}
(R_{\cdot}(x)\otimes I \otimes I+(I\otimes I\otimes L_{\succ}(x))(r_{12}\cdot r_{13}+
r_{23}\succ r_{12}-r_{13}\prec r_{23})=0,\end{align}
\begin{align}\label{CD8}&
 (R_{\prec}(x)\otimes I \otimes I)(r_{23}\prec r_{12}+r_{13}\cdot r_{23}-r_{12}\succ r_{13})
\\=&(I\otimes I\otimes L_{\succ}(x))(r_{12}\cdot r_{13}+r_{23}\succ r_{12}-r_{13}\prec r_{23})\nonumber
.\end{align}
\end{thm}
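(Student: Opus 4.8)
The plan is to substitute the coboundary formulas \eqref{CD1}--\eqref{CD2} into the axioms defining an anti-dendriform bialgebra and then reorganize the resulting operator identities on $A\otimes A$ and $A^{\otimes 3}$. By the Remark preceding this theorem, $(A,\succ,\prec,\Delta_{\succ,r},\Delta_{\prec,r})$ is an anti-dendriform bialgebra exactly when (i) the dual operations $\succ_{A^*},\prec_{A^*}$ attached to $\Delta_{\succ,r},\Delta_{\prec,r}$ through \eqref{Dc1}--\eqref{Dc2} make $A^*$ an anti-dendriform algebra, and (ii) the six compatibility conditions of Definition~\ref{DB1} hold. Throughout I would fix $r=\sum_i a_i\otimes b_i$, work with the regular representation $(A,L_{\succ},R_{\succ},L_{\prec},R_{\prec})$ of $(A,\succ,\prec)$, and use Proposition~\ref{zr} together with \eqref{R1}--\eqref{R4}; in particular, since $(A,\cdot)$ is associative, these let one collapse every expression of the shape $L_{\cdot}(x\cdot y)-L_{\cdot}(x)L_{\cdot}(y)$, $R_{\cdot}(x\cdot y)-R_{\cdot}(y)R_{\cdot}(x)$, $L_{\succ}(x\cdot y)+L_{\succ}(x)L_{\succ}(y)$, and their mixed cousins when they act on a single tensor leg of $r$.

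For the coalgebra part (i), I would transpose \eqref{CD1}--\eqref{CD2} and strip off the pairing with $x$ to obtain closed formulas for $\zeta\succ_{A^*}\eta$ and $\zeta\prec_{A^*}\eta$ as linear combinations of terms $\langle b_i,\eta\rangle\,L_{\prec}^*(a_i)\zeta$, $\langle a_i,\zeta\rangle\,R_{\cdot}^*(b_i)\eta$ and the analogous ones coming from $\Delta_{\prec,r}$. Substituting these into the anti-dendriform axioms \eqref{A1}--\eqref{A2} for $A^*$ and pairing against an arbitrary $\zeta\otimes\eta\otimes\xi$ turns each axiom, after normalizing operators with \eqref{R1}--\eqref{R4}, into the requirement that a suitable operator --- built from one element of $A$ sitting in one of the three legs of $A^{\otimes 3}$ --- annihilate one of the triple products $r_{12}\prec r_{13}+r_{23}\cdot r_{12}+r_{13}\succ r_{23}$, $r_{13}\cdot r_{23}-r_{12}\succ r_{13}+r_{23}\prec r_{12}$, $r_{12}\cdot r_{13}+r_{23}\succ r_{12}-r_{13}\prec r_{23}$ and their $\tau$-images. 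This is precisely the content of \eqref{CD5}--\eqref{CD8}, any surviving ``symmetric'' term being of the form handled by \eqref{CD3}--\eqref{CD4}.

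For the compatibility part (ii), I would first record the useful simplification $\Delta_{\succ,r}(x)+\Delta_{\prec,r}(x)=(R_{\succ}(x)\otimes I-I\otimes L_{\prec}(x))r$ and then treat the six conditions one at a time. The first four conditions of Definition~\ref{DB1} turn out to hold identically: substituting \eqref{CD1}--\eqref{CD2}, the two sides differ only by terms of the form $([R_{\cdot}(x\cdot y)-R_{\cdot}(y)R_{\cdot}(x)]\otimes I)r$ and $(I\otimes[L_{\succ}(x\cdot y)+L_{\succ}(x)L_{\succ}(y)])r$ (and their $\prec$-analogues), each of which vanishes by Proposition~\ref{zr} and \eqref{R1}--\eqref{R4}. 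The last two conditions of Definition~\ref{DB1} are genuine constraints: after substituting, I would split the result into the part acting on $r$ and the part acting on $\tau(r)$; using \eqref{R3} --- e.g.\ $L_{\succ}(x)R_{\succ}(y)=-R_{\succ}(y)L_{\cdot}(x)$ and $R_{\prec}(x)L_{\prec}(y)=-L_{\prec}(y)R_{\cdot}(x)$ --- to rewrite the $\tau(r)$-part, both parts are carried by the \emph{same} operator, so the condition becomes that operator applied to $r+\tau(r)$, which is exactly \eqref{CD3} and \eqref{CD4}. Reversing these reductions shows \eqref{CD3}--\eqref{CD8} are also necessary, giving the equivalence.

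The argument carries no conceptual surprise --- the defining axioms expand into several dozen operator terms --- so the real difficulty is organizational. The two places where care is essential are (a) keeping the $\tau$-flip bookkeeping consistent, so that leftover linear terms genuinely close up as an operator applied to $r+\tau(r)$ rather than to $r$ alone, and (b) recognizing the four associator-type combinations of the $r_{ij}$'s hidden inside the quadratic-in-$r$ debris produced by the coalgebra axioms. In practice I would carry out one representative compatibility condition (the fifth of Definition~\ref{DB1}) and one representative coalgebra axiom in full, then note that the rest go through by the identical mechanism, much as in the parallel computations for antisymmetric infinitesimal and dendriform bialgebras.
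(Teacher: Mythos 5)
The paper states this theorem as a result recalled from \cite{030} and gives no proof in the text, so there is nothing internal to compare against; your direct-substitution strategy is the standard route for coboundary bialgebra theorems and is surely the one taken in the cited source. The specific reductions you describe check out --- the first four compatibility conditions of Definition~\ref{DB1} do collapse to identities via \eqref{R1}--\eqref{R4} and associativity of $\cdot$, and the fifth and sixth conditions do close up, via \eqref{R3}, as the operators of \eqref{CD3} and \eqref{CD4} applied to $r+\tau(r)$ --- though the coalgebra side is only sketched, and your assertion that the surviving non-quadratic terms there are ``handled by'' \eqref{CD3}--\eqref{CD4} is precisely the bookkeeping that would still have to be carried out to make the equivalence with \eqref{CD5}--\eqref{CD8} airtight.
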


\begin{defi} \cite{030} Let $(A,\succ,\prec)$ be an anti-dendriform algebra and 
$r\in A\otimes A$. The equation
\begin{equation} \label{YE} r_{12}\cdot r_{13}+r_{23}\succ r_{12}-r_{13}\prec r_{23}=0.\end{equation}
 is called the {\bf anti-dendriform Yang-Baxter equation} in $(A,\succ,\prec)$  or {\bf AD-YBE} in short. 
\end{defi}

\begin{thm} \cite{030} \label{BY} Let $r\in A\otimes A$ be skew-symmetric.
Assume that
$\Delta_{\succ,r},\Delta_{\prec,r}$ defined by Eqs.~\eqref{CD1}-\eqref{CD2}.
Then $(A,\succ,\prec,\Delta_{\succ,r},\Delta_{\prec,r})$ is an anti-dendriform bialgebra if
$r$ is a solution of the AD-YBE in $(A,\succ,\prec)$.
\end{thm}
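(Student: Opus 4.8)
The plan is to deduce Theorem~\ref{BY} from Theorem~\ref{YE3}. Write $Q:=r_{12}\cdot r_{13}+r_{23}\succ r_{12}-r_{13}\prec r_{23}\in A\otimes A\otimes A$ for the left-hand side of~\eqref{YE}. Since $r$ is skew-symmetric, $r+\tau(r)=0$, so conditions~\eqref{CD3} and~\eqref{CD4} are satisfied automatically. Hence the whole statement reduces to showing that, for skew-symmetric $r$, the four conditions~\eqref{CD5}--\eqref{CD8} hold simultaneously if and only if $Q=0$.

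The heart of the matter is a tensor-identity lemma: \emph{for skew-symmetric $r$, each of the three $3$-fold tensors occurring (inside the operators) in~\eqref{CD5}, in~\eqref{CD6} (equivalently the first block of~\eqref{CD8}), and in~\eqref{CD7} (equivalently the second block of~\eqref{CD8}, which is literally $Q$) is obtained from $Q$ by a permutation of the three tensor legs of $A\otimes A\otimes A$, up to sign.} For~\eqref{CD7} there is nothing to check, and~\eqref{CD8} only involves the blocks already appearing in~\eqref{CD6} and~\eqref{CD7}. For~\eqref{CD5} and~\eqref{CD6} I would expand every summand via the formulas defining $r_{ab}\ast r_{cd}$, then use skew-symmetry repeatedly in the form $\sum_i a_i\otimes b_i=-\sum_i b_i\otimes a_i$ (with the attendant relabelling of the summation index), together with the anti-dendriform identities~\eqref{A1}--\eqref{A2}, which are precisely what is needed to rewrite the summands in which two of the three factors have already been multiplied (terms such as $x\succ(y\succ z)$, $(x\cdot y)\succ z$, $(x\prec y)\prec z$). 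Matching the resulting nine summands against the corresponding permutation of $Q$ gives the claimed identity.

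Granting the lemma, the ``if'' direction is immediate: if $Q=0$ then every $3$-tensor block in~\eqref{CD5}--\eqref{CD8} vanishes, so all four conditions hold and Theorem~\ref{YE3} produces the anti-dendriform bialgebra. For the ``only if'' direction one assumes the bialgebra, hence~\eqref{CD5}--\eqref{CD8}; using the lemma and the invertibility of the leg permutations these become a closed system of relations asserting that certain first-order-in-$x$ operators annihilate $Q$ for every $x\in A$, and a short further argument---again exploiting the symmetry that skew-symmetry of $r$ confers on $Q$---isolates $Q$ itself and forces $Q=0$, i.e.\ that $r$ solves the AD-YBE.

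The step I expect to be the real obstacle is the bookkeeping in the lemma: tracking how skew-symmetry reshuffles the summands of the~\eqref{CD5}- and~\eqref{CD6}-blocks and applying~\eqref{A1}--\eqref{A2} correctly to the summands that already carry a product, so that everything lines up with a single permuted copy of $Q$. Once those identities are in hand, both implications are purely formal.
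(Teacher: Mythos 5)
First, a point of reference: the paper does not actually prove Theorem~\ref{BY} here --- it is imported from \cite{030} --- so there is no in-paper proof to match your argument against. The closest internal analogue is the unnamed theorem in Section~3 (the quasi-triangular case), whose proof shows that for invariant $r+\tau(r)$ the tensor blocks $D_1(r)$, $D_2(r)$ appearing in \eqref{CD5}--\eqref{CD6} and \eqref{CD8} differ from (permutations of) $D(r)$ by terms built from $r+\tau(r)$; specializing to $r+\tau(r)=0$ is exactly your ``tensor-identity lemma,'' and your identification of the blocks ($D_1(r)$ in \eqref{CD5}, $D_2(r)$ in \eqref{CD6} and the left side of \eqref{CD8}, $D(r)$ itself in \eqref{CD7} and the right side of \eqref{CD8}) is correct. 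So your ``if'' direction is sound and is essentially the paper's own route (Theorem~\ref{YE3} plus the vanishing of \eqref{CD3}--\eqref{CD4} for skew-symmetric $r$, plus $D(r)=0\Rightarrow D_1(r)=D_2(r)=0$ as in Theorem~\ref{Ya1}).

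The genuine gap is in your ``only if'' direction. Conditions \eqref{CD5}--\eqref{CD8} have the form ``$(\text{operator depending on }x)\,(\text{3-tensor})=0$ for all $x$,'' which is strictly weaker than the vanishing of the 3-tensor itself: for instance \eqref{CD7} asserts only that $(R_{\cdot}(x)\otimes I\otimes I+I\otimes I\otimes L_{\succ}(x))\,Q=0$ for every $x$, and if the multiplication is degenerate (e.g.\ has a large annihilator) this says nothing about $Q$. Replacing $Q$ by a signed permutation $\pm\sigma(Q)$ of its legs, as your lemma does, changes nothing about this logical shape --- the permutations are invertible, but they are being applied to the argument, not to the annihilating operators, so you still only ever learn that $Q$ lies in the common kernel of a family of operators. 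Your proposal defers the entire difficulty to ``a short further argument\dots forces $Q=0$,'' but no such formal argument exists: this implication requires either a nondegeneracy hypothesis or a genuinely different computation (going back to the bialgebra axioms of Definition~\ref{DB1} directly), and it is precisely where the content of the ``only if'' half of the theorem lives. As written, your proof establishes sufficiency of the AD-YBE but not necessity.
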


Additional details regarding bialgebras are available in \cite{030}.
\section{Quasi-triangular anti-dendriform bialgebras }
A skew-symmetric solution of the AD-YBE generates 
an (coboundary) anti-dendriform bialgebra. In this section, we examine how solutions of the AD-YBE 
produce anti-dendriform bialgebras without imposing skew-symmetry constraints. 

Let $A$ be a vector space and $r\in A\otimes A$. Define $T_{r}:A^{*}\longrightarrow A$ by
 \begin{equation}\label{To}
\langle T_{r}(\zeta),\eta\rangle=\langle r,\zeta\otimes \eta\rangle,~~\forall~
\zeta,\eta\in A^{*}.
\end{equation}
It is apparent that $T_{r}^{*}=T_{\tau(r)},~T_{r+\tau(r)}^{*}=T_{r+\tau(r)}$.

\begin{defi} \label{In}
 Let $(A,\succ,\prec)$ be an anti-dendriform algebra and $r\in A\otimes A$. We say that $r$ is {\bf invariant} if 
 \begin{align}&\label{IE3}
(R_{\prec}(x)\otimes I+I\otimes L_{\cdot}(x))r=0,
\\&\label{IE4}(R_{\cdot}(x)\otimes I+I\otimes L_{\succ}(x))r=0,~\forall~x\in A.
\end{align}
 
\end{defi}
 
 \begin{lem} \label{In1}
 Let $(A,\succ,\prec)$ be an anti-dendriform algebra and $r\in A\otimes A$. Then 
$r$ is invariant if and only if the following equations hold:
  \begin{align}&\label{IE5} R_{\cdot}^{*}(T_{r}(\zeta))\eta=-L_{\prec}^{*}(T_{\tau(r)}(\eta) \zeta,
\\&\label{IE6}R_{\succ}^{*}(T_{r}(\zeta))\eta=-L_{\cdot}^{*}(T_{\tau(r)}(\eta) \zeta,~~\forall~\zeta,\eta\in A^{*}.\end{align}
Moreover, Eqs.~(\ref{IE5})-(\ref{IE6}) are equivalent to the following equations: 
 \begin{align}&\label{IE7}
L_{\cdot}(x) T_{r}(\zeta)=-T_{r}(R_{\prec}^{*}(x)\zeta),\ \ \
 L_{\succ}(x) T_{r}(\zeta) =-T_{r}(R_{\cdot}^{*}(x)\zeta),~~\forall~x\in A,\zeta\in A^{*}.
\end{align}

\end{lem}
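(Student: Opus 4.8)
The plan is to obtain the three blocks of identities as pairwise equivalent by a short sequence of dual-pairing manipulations; only the definition of $T_r$ and the transpose conventions enter, and no anti-dendriform axiom is used. First I would fix a presentation $r=\sum_i a_i\otimes b_i$, so that $T_r(\zeta)=\sum_i\langle a_i,\zeta\rangle b_i$ and $T_{\tau(r)}(\eta)=\sum_i\langle b_i,\eta\rangle a_i$, and I would record the transport identities $\langle a\prec x,\zeta\rangle=\langle a,R_\prec^{*}(x)\zeta\rangle$, $\langle a\cdot x,\zeta\rangle=\langle a,R_\cdot^{*}(x)\zeta\rangle$, $\langle x\succ b,\eta\rangle=\langle b,L_\succ^{*}(x)\eta\rangle$, $\langle x\cdot b,\eta\rangle=\langle b,L_\cdot^{*}(x)\eta\rangle$, which are just the definitions of the transposes together with $\langle R_\cdot^{*}(z)\eta,y\rangle=\langle\eta,y\cdot z\rangle$, etc.

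For the equivalence of \eqref{IE3}--\eqref{IE4} with \eqref{IE5}--\eqref{IE6}: both sides of \eqref{IE5} lie in $A^{*}$, so it suffices to test against an arbitrary $y\in A$. The left side gives $\langle R_\cdot^{*}(T_r(\zeta))\eta,y\rangle=\langle\eta,y\cdot T_r(\zeta)\rangle=\sum_i\langle a_i,\zeta\rangle\langle y\cdot b_i,\eta\rangle$ and the right side gives $\langle L_\prec^{*}(T_{\tau(r)}(\eta))\zeta,y\rangle=\langle\zeta,T_{\tau(r)}(\eta)\prec y\rangle=\sum_i\langle b_i,\eta\rangle\langle a_i\prec y,\zeta\rangle$; hence \eqref{IE5} holds for all $\zeta,\eta$ exactly when $\langle (R_\prec(y)\otimes I+I\otimes L_\cdot(y))r,\zeta\otimes\eta\rangle=0$ for all $y,\zeta,\eta$, that is, exactly when \eqref{IE3} holds. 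Running the identical computation with $R_\succ^{*}(T_r(\zeta))\eta$ and $L_\cdot^{*}(T_{\tau(r)}(\eta))\zeta$ collapses \eqref{IE6} to $(R_\cdot(y)\otimes I+I\otimes L_\succ(y))r=0$, which is \eqref{IE4}.

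For the equivalence of \eqref{IE3}--\eqref{IE4} with \eqref{IE7}: I would instead pair the tensor equations directly against $\zeta\otimes\eta$ and transport both multiplication operators onto the $A^{*}$-legs. Thus \eqref{IE3}, written $\sum_i (a_i\prec x)\otimes b_i+\sum_i a_i\otimes (x\cdot b_i)=0$, becomes $\langle r,(R_\prec^{*}(x)\zeta)\otimes\eta\rangle+\langle r,\zeta\otimes (L_\cdot^{*}(x)\eta)\rangle=0$, i.e. $\langle T_r(R_\prec^{*}(x)\zeta)+L_\cdot(x)T_r(\zeta),\eta\rangle=0$ for all $\eta$, which (by nondegeneracy of the pairing) is the first identity of \eqref{IE7}; the same manipulation applied to \eqref{IE4} gives the second, $L_\succ(x)T_r(\zeta)=-T_r(R_\cdot^{*}(x)\zeta)$. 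Collecting the two chains of equivalences shows that \eqref{IE3}--\eqref{IE4}, \eqref{IE5}--\eqref{IE6} and \eqref{IE7} are pairwise equivalent.

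I do not expect any genuine obstacle here: the computation is routine linear algebra. The only point requiring care is the bookkeeping -- in particular reading $R_\cdot^{*}(z)\eta$ correctly (so that $\langle R_\cdot^{*}(T_r(\zeta))\eta,y\rangle=\langle\eta,y\cdot T_r(\zeta)\rangle$, with $T_r(\zeta)$ entering as the right factor of the product) and handling the $\tau$-flip hidden in $T_{\tau(r)}(\eta)=\sum_i\langle b_i,\eta\rangle a_i$, which exchanges the two tensor legs of $r$ and is exactly what makes the subscripts of \eqref{IE5}--\eqref{IE6} match those of \eqref{IE3}--\eqref{IE4}. Since $T_r^{*}=T_{\tau(r)}$ is already recorded, nothing deeper is involved.
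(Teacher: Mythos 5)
Your proposal is correct and follows essentially the same route as the paper: both arguments are the same dual-pairing computation, testing the invariance identities \eqref{IE3}--\eqref{IE4} against $\zeta\otimes\eta$ and transporting the multiplication operators onto either the $A$-slot (yielding \eqref{IE5}--\eqref{IE6}) or the $A^{*}$-slot (yielding \eqref{IE7}), with the $\tau$-flip accounting for the $T_{\tau(r)}$ terms. The only difference is presentational: the paper writes out one chain and dismisses the rest with ``similarly,'' whereas you make both chains explicit.
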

\begin{proof} 
For all $~x\in A,\zeta,\eta\in A^{*}$, we have
\begin{align*}
\langle \zeta\otimes\eta,(I\otimes L_{\cdot}(x)+R_{\prec}(x)\otimes I)r
\rangle
&=\langle \zeta\otimes L_{\cdot}^{*}(x)\eta,r\rangle
+\langle \eta\otimes R_{\prec}^{*}(x)\zeta,\tau(r)\rangle
\\&=\langle  T_{r}(\zeta),L_{\cdot}^{*}(x)\eta\rangle
+\langle T_{\tau(r)}(\eta), R_{\prec}^{*}(x)\zeta\rangle
\\&=\langle  x\cdot T_{r}(\zeta),\eta\rangle
+\langle T_{\tau(r)}(\eta)\prec x, \zeta\rangle
\\&=\langle  x,R_{\cdot}^{*}(T_{r}(\zeta))\eta\rangle
+\langle x, L_{\prec}^{*}(T_{\tau(r)}(\eta)) \zeta\rangle,
\end{align*}
Thus, Eq.~(\ref{IE3}) $\Longleftrightarrow$ Eq.~(\ref{IE5}). Similarly,
 Eq.~(\ref{IE4}) $\Longleftrightarrow$ Eq.~(\ref{IE6}), Eqs.~(\ref{IE5})-(\ref{IE6}) 
 hold if and only if Eq.~(\ref{IE7}) holds. 
\end{proof}

\begin{pro} \label{Si}
 Let $(A,\succ,\prec)$ be an anti-dendriform algebra and $r\in A\otimes A$. Then the following
 conditions are equivalent:
  \begin{enumerate}
\item $r+\tau(r)$ is invariant.
\item The following equations hold:
\begin{align}&\label{IE8}
L_{\cdot}(x) T_{r+\tau(r)}(\zeta)=-T_{r+\tau(r)}(R_{\prec}^{*}(x)\zeta),\ \ \ 
L_{\succ}(x) T_{r+\tau(r)}(\zeta) =-T_{r+\tau(r)}(R_{\cdot}^{*}(x)\zeta),~~\forall~x\in A,\zeta\in A^{*}.\end{align}
\item The following equations hold:
\begin{align}&\label{IE9}
T_{r+\tau(r)}(L_{\cdot}^{*}(x)\zeta) =-R_{\prec}(x)T_{r+\tau(r)}(\zeta),\ \ \ 
 T_{r+\tau(r)}L_{\succ}^{*}(x)(\zeta) =-R_{\cdot}(x)T_{r+\tau(r)}(\zeta),~~\forall~x\in A,\zeta\in A^{*}.
\end{align}
\item The following equations hold:
\begin{align}&\label{IE10} R_{\cdot}^{*}(T_{r+\tau(r)}(\zeta))\eta=-L_{\prec}^{*}(T_{r+\tau(r)}(\eta) \zeta, \ \ \
R_{\succ}^{*}(T_{r+\tau(r)}(\zeta))\eta=-L_{\cdot}^{*}(T_{r+\tau(r)}(\eta) \zeta,~~\forall~\zeta,\eta\in A^{*}.\end{align}
\end{enumerate}
 \end{pro}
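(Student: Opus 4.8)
The plan is to obtain the equivalences $(1)\Leftrightarrow(2)\Leftrightarrow(4)$ essentially for free from Lemma~\ref{In1} applied to the symmetrized tensor $s:=r+\tau(r)$, and then to deduce $(2)\Leftrightarrow(3)$ by a transposition argument that exploits the self-adjointness $T_{s}^{*}=T_{s}$ recorded just after \eqref{To}.

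First I would note that $\tau(s)=s$, so $T_{\tau(s)}=T_{s}=T_{r+\tau(r)}$. Substituting $s$ for $r$ in Lemma~\ref{In1}, the statement ``$s$ is invariant'', which is exactly condition $(1)$, is equivalent to \eqref{IE5}--\eqref{IE6} written for $s$; and since $T_{\tau(s)}=T_{s}$ those are verbatim \eqref{IE10}, i.e.\ condition $(4)$. The same substitution turns the second equivalence of Lemma~\ref{In1}, namely ``\eqref{IE5}--\eqref{IE6} $\Leftrightarrow$ \eqref{IE7}'', into ``\eqref{IE10} $\Leftrightarrow$ \eqref{IE8}'', that is, $(4)\Leftrightarrow(2)$. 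Hence $(1)$, $(2)$ and $(4)$ are equivalent with nothing left to compute.

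It remains to prove $(2)\Leftrightarrow(3)$. Fix $x\in A$ and read \eqref{IE8} as a pair of identities of linear maps $A^{*}\to A$:
\[
L_{\cdot}(x)\circ T_{s}=-\,T_{s}\circ R_{\prec}^{*}(x),\qquad
L_{\succ}(x)\circ T_{s}=-\,T_{s}\circ R_{\cdot}^{*}(x).
\]
Since $A$ is finite-dimensional we identify $A^{**}$ with $A$, so transposition is an involution on linear maps and $(f^{*})^{*}=f$. Taking transposes of both sides and using $T_{s}^{*}=T_{s}$, $L_{\cdot}(x)^{*}=L_{\cdot}^{*}(x)$, $L_{\succ}(x)^{*}=L_{\succ}^{*}(x)$, $(R_{\prec}^{*}(x))^{*}=R_{\prec}(x)$ and $(R_{\cdot}^{*}(x))^{*}=R_{\cdot}(x)$, these identities become
\[
T_{s}\circ L_{\cdot}^{*}(x)=-\,R_{\prec}(x)\circ T_{s},\qquad
T_{s}\circ L_{\succ}^{*}(x)=-\,R_{\cdot}(x)\circ T_{s},
\]
which is precisely \eqref{IE9}. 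As transposition is reversible, the equivalence holds in both directions, giving $(2)\Leftrightarrow(3)$ and completing the proof.

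The only delicate point is the bookkeeping in this last step: one must use both the self-adjointness $T_{s}^{*}=T_{s}$ of the symmetrization and the canonical identification $A^{**}=A$ in order to collapse $(R_{\prec}^{*}(x))^{*}$ back to $R_{\prec}(x)$ and $T_{s}^{*}$ back to $T_{s}$; once the transposes are tracked correctly everything else is a direct appeal to Lemma~\ref{In1}. One could alternatively dualize \eqref{IE10} to reach $(3)\Leftrightarrow(4)$ directly, but routing through $(2)$ keeps the transposes most transparent.
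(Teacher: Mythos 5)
Your proposal is correct and follows essentially the same route as the paper: both derive the equivalences with condition (a) by specializing Lemma~\ref{In1} to the symmetric tensor $r+\tau(r)$, and both obtain (b)$\Leftrightarrow$(c) by transposing and using $T_{r+\tau(r)}^{*}=T_{r+\tau(r)}$. Your write-up merely makes the transposition bookkeeping more explicit than the paper's one-line appeal to self-adjointness.
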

  
 \begin{proof} In view of Lemma \ref{In1}, $r+\tau(r)$ is invariant if and only if Eq.~(\ref{IE8}) holds.
 Since $T_{r+\tau(r)}=T_{r+\tau(r)}^{*}$, Eq.~(\ref{IE8}) holds if and only if Eq.~ (\ref{IE9}) holds.
 By Eq.~(\ref{IE5})-(\ref{IE6}), $r+\tau(r)$ is invariant if and only if Eq.~(\ref{IE10}) holds. The proof is completed.
\end{proof}
 Combining Eqs.~(\ref{IE8})-(\ref{IE10}), we obtain for all~$x\in A,~\zeta,\eta\in A^{*}$,
 \begin{align}&\label{IE11}
L_{\prec}(x) T_{r+\tau(r)}(\zeta)=T_{r+\tau(r)}(R_{\succ}^{*}(x)\zeta), 
\\&\label{IE12}T_{r+\tau(r)}(L_{\prec}^{*}(x)\zeta) =R_{\succ}(x)T_{r+\tau(r)}(\zeta),
\\&\label{IE13} R_{\prec}^{*}(T_{r+\tau(r)}(\zeta))\eta=L_{\succ}^{*}(T_{r+\tau(r)}(\eta) \zeta.\end{align}
 
\begin{pro} \label{Da1}
Let $(A,\succ,\prec,\Delta_{\succ,r},\Delta_{\prec,r})$ be a anti-dendriform bialgebra and $r\in A\otimes A$, 
where the comultiplications
$\Delta_{\succ,r},\Delta_{\prec,r}$ are defined by Eqs.~(\ref{CD1})-(\ref{CD2}).
 Then the anti-dendriform algebra structure $(\succ_r,\prec_r)$ on $A^{*}$ is
given by 
\begin{align}&\label{IE1}\zeta\succ_{r}\eta=-R_{\cdot}^{*}(T_{r}(\zeta))\eta-L_{\prec}^{*}(T_{\tau(r)}(\eta))\zeta,
\\&\label{IE2}\zeta\prec_{r}\eta=
R_{\succ}^{*}(T_{r}(\zeta))\eta+L_{\cdot}^{*}(T_{\tau(r)}(\eta))\zeta.
\end{align}
And the associative algebra structure on  $\cdot_r$ on $A^{*}$ is
given by 
\begin{align}\label{IE0}\zeta\cdot_{r}\eta=L_{\succ}^{*}(T_{\tau(r)}(\eta))\zeta-R_{\prec}^{*}(T_{r}(\zeta))\eta,
\end{align}
where $\cdot=\succ+\prec,~
 L_{\cdot}=L_{\succ}+L_{\prec},~R_{\cdot}=R_{\succ}+R_{\prec}$.
\end{pro}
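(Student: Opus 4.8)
The plan is a straightforward dualization. The hypothesis that $(A,\succ,\prec,\Delta_{\succ,r},\Delta_{\prec,r})$ is an anti-dendriform bialgebra already guarantees that $(A,\Delta_{\succ,r},\Delta_{\prec,r})$ is an anti-dendriform coalgebra, hence that $A^{*}$ carries a well-defined anti-dendriform algebra structure $(\succ_{r},\prec_{r})$, and by the remark following Definition \ref{DB1} this structure is characterized by $\langle\Delta_{\succ,r}(x),\zeta\otimes\eta\rangle=\langle x,\zeta\succ_{r}\eta\rangle$ and $\langle\Delta_{\prec,r}(x),\zeta\otimes\eta\rangle=\langle x,\zeta\prec_{r}\eta\rangle$. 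So the entire task is to compute the left-hand sides and rewrite them in the form $\langle x,-\rangle$. I would set up notation by writing $r=\sum_i a_i\otimes b_i$, so that $T_r(\zeta)=\sum_i\langle a_i,\zeta\rangle b_i$ and $T_{\tau(r)}(\eta)=\sum_i\langle b_i,\eta\rangle a_i$, and record the identity $\langle T_r(\zeta),\eta\rangle=\langle r,\zeta\otimes\eta\rangle=\langle T_{\tau(r)}(\eta),\zeta\rangle$, which is the device for passing between the two tensor legs.

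For $\succ_{r}$, I would expand $\Delta_{\succ,r}(x)=-(R_{\prec}(x)\otimes I+I\otimes L_{\cdot}(x))r=-\sum_i(a_i\prec x)\otimes b_i-\sum_i a_i\otimes(x\cdot b_i)$ and pair against $\zeta\otimes\eta$. The second sum becomes $-\langle T_r(\zeta),L_{\cdot}^{*}(x)\eta\rangle=-\langle x\cdot T_r(\zeta),\eta\rangle=-\langle x,R_{\cdot}^{*}(T_r(\zeta))\eta\rangle$, using $x\cdot T_r(\zeta)=R_{\cdot}(T_r(\zeta))x$. For the first sum, I move $R_{\prec}^{*}(x)$ onto $\zeta$ and then swap legs: $-\langle T_r(R_{\prec}^{*}(x)\zeta),\eta\rangle=-\langle T_{\tau(r)}(\eta),R_{\prec}^{*}(x)\zeta\rangle=-\langle T_{\tau(r)}(\eta)\prec x,\zeta\rangle=-\langle x,L_{\prec}^{*}(T_{\tau(r)}(\eta))\zeta\rangle$. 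Adding the two contributions gives $\langle\Delta_{\succ,r}(x),\zeta\otimes\eta\rangle=\langle x,-R_{\cdot}^{*}(T_r(\zeta))\eta-L_{\prec}^{*}(T_{\tau(r)}(\eta))\zeta\rangle$, which is Eq.~\eqref{IE1}. The computation for $\prec_{r}$ is entirely parallel: starting from $\Delta_{\prec,r}(x)=\sum_i(a_i\cdot x)\otimes b_i+\sum_i a_i\otimes(x\succ b_i)$, the same manipulations produce Eq.~\eqref{IE2}.

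Finally, $\cdot_{r}=\succ_{r}+\prec_{r}$ is obtained by adding Eqs.~\eqref{IE1}--\eqref{IE2}: the terms $-R_{\cdot}^{*}(T_r(\zeta))\eta+R_{\succ}^{*}(T_r(\zeta))\eta$ collapse to $-R_{\prec}^{*}(T_r(\zeta))\eta$ and $-L_{\prec}^{*}(T_{\tau(r)}(\eta))\zeta+L_{\cdot}^{*}(T_{\tau(r)}(\eta))\zeta$ to $L_{\succ}^{*}(T_{\tau(r)}(\eta))\zeta$, yielding Eq.~\eqref{IE0}. I do not expect any genuine obstacle here; the only points requiring care are tracking which tensor leg each multiplication operator acts on, applying $\langle T_r(\zeta),\eta\rangle=\langle T_{\tau(r)}(\eta),\zeta\rangle$ in the correct direction when swapping legs, and carrying the sign in the definition of $\Delta_{\succ,r}$ faithfully throughout.
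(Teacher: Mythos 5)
Your proposal is correct and follows essentially the same route as the paper: dualize $\Delta_{\succ,r}$ and $\Delta_{\prec,r}$ against $\zeta\otimes\eta$, use $\langle T_r(\zeta),\eta\rangle=\langle T_{\tau(r)}(\eta),\zeta\rangle$ to move between tensor legs, rewrite each term as $\langle x,-\rangle$, and obtain \eqref{IE0} by summing \eqref{IE1} and \eqref{IE2}. The only cosmetic difference is that you expand $r=\sum_i a_i\otimes b_i$ explicitly while the paper stays in operator notation.
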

\begin{proof} For all $\zeta,\eta\in A^{*}$ and $x\in A$,
\begin{align*}\langle \zeta\succ_{r}\eta,x\rangle
&=\langle \zeta\otimes\eta,\Delta_{\succ,r}(x)\rangle
=-\langle \zeta\otimes\eta,(I\otimes L_{\cdot}(x)+R_{\prec}(x)\otimes I)r
\rangle
\\&=-\langle \zeta\otimes L_{\cdot}^{*}(x)\eta,r\rangle
-\langle \eta\otimes R_{\prec}^{*}(x)\zeta,\tau(r)\rangle
=-\langle T_{r}(\zeta),L_{\cdot}^{*}(x)\eta\rangle
-\langle T_{\tau(r)}(\eta),R_{\prec}^{*}(x)\zeta\rangle
\\&=-\langle x\cdot T_{r}(\zeta),\eta\rangle-
\langle T_{\tau(r)}(\eta)\prec x,\zeta\rangle
=-\langle R_{\cdot}^{*}(T_{r}(\zeta))\eta+L_{\prec}^{*}(T_{\tau(r)}(\eta))\zeta
,x\rangle,
\end{align*}
which indicates that Eq. (\ref{IE1}) holds.
By the same token, Eq. (\ref{IE2}) holds. Using Eqs. (\ref{IE1})-(\ref{IE2}), we get Eq. (\ref{IE0}).
\end{proof}

\begin{thm} Let $(A,\succ,\prec)$ be an anti-dendriform algebra and 
$r=\sum_{i}a_i\otimes b_i\in A\otimes A$. Assume that
$\Delta_{\succ,r},\Delta_{\prec,r}$ given by Eqs.~(\ref{CD1})-(\ref{CD2}). If $r$ is a solution of the 
AD-YBE in $(A,\succ,\prec)$ and $r+\tau(r)$ is invariant.
Then $(A,\succ,\prec,\Delta_{\succ,r},\Delta_{\prec,r})$ is an anti-dendriform bialgebra.
\end{thm}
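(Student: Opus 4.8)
The plan is to apply Theorem~\ref{YE3}, which gives six equations~\eqref{CD3}--\eqref{CD8} that are necessary and sufficient for $(A,\succ,\prec,\Delta_{\succ,r},\Delta_{\prec,r})$ to be an anti-dendriform bialgebra. So the task reduces to verifying these six equations from the two hypotheses: that $r$ solves the AD-YBE~\eqref{YE}, i.e.\ $r_{12}\cdot r_{13}+r_{23}\succ r_{12}-r_{13}\prec r_{23}=0$, and that $s:=r+\tau(r)$ is invariant in the sense of Definition~\ref{In}, i.e.\ $(R_{\prec}(x)\otimes I+I\otimes L_{\cdot}(x))s=0$ and $(R_{\cdot}(x)\otimes I+I\otimes L_{\succ}(x))s=0$ for all $x\in A$.

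First I would dispose of~\eqref{CD3} and~\eqref{CD4}. Each of these is a sum of operators applied to $r+\tau(r)=s$. Equation~\eqref{CD4} is literally $(R_{\prec}(x)\otimes I+I\otimes L_{\cdot}(x))(L_{\succ}(y)\otimes I+I\otimes R_{\cdot}(y))s=0$; since $s$ is invariant, the inner factor already annihilates $s$ once we note $(L_{\succ}(y)\otimes I+I\otimes R_{\cdot}(y))$ is, up to the symmetry $T_{s}^{*}=T_{s}$, the ``transpose'' of the second invariance relation~\eqref{IE4} — more directly, invariance of $s$ gives both $(R_{\prec}(x)\otimes I+I\otimes L_{\cdot}(x))s=0$ and (by applying $\tau$ and using $\tau(s)=s$) $(I\otimes R_{\prec}(x)+L_{\cdot}(x)\otimes I)s=0$, and similarly from~\eqref{IE4} one gets $(I\otimes R_{\cdot}(x)+L_{\succ}(x)\otimes I)s=0$; this last is exactly the inner factor of~\eqref{CD4}, so~\eqref{CD4} holds. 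For~\eqref{CD3}, I would expand the bracket and regroup its four terms as a combination of $(R_{\prec}(x)\otimes I+I\otimes L_{\cdot}(x))$ and $(L_{\succ}(x)\otimes I+I\otimes R_{\cdot}(x))$ type operators acting on $s$; the relations from the axioms~\eqref{A1}--\eqref{A2} (e.g.\ $L_{\succ}(x)R_{\succ}(y)=\ldots$, read off from~\eqref{R1}--\eqref{R4}) let one rewrite $L_{\succ}(x)R_{\succ}(y)\otimes I$ and $I\otimes R_{\prec}(x)L_{\prec}(y)$ so that the whole bracket becomes a left/right composite hitting $s$, which vanishes by invariance. This is bookkeeping but should go through cleanly.

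The substantive part is~\eqref{CD5}--\eqref{CD8}. Here is the key observation: modulo the invariance of $s$, each of the four ``associator-type'' combinations appearing in~\eqref{CD5}--\eqref{CD8} differs from the AD-YBE element $Q:=r_{12}\cdot r_{13}+r_{23}\succ r_{12}-r_{13}\prec r_{23}$ by terms that involve $s$ in one of the three tensor legs in a way that the prefactor $(R_{\prec}(x)\otimes I\otimes I - I\otimes I\otimes L_{\succ}(x))$, $(R_{\prec}(x)\otimes I\otimes I + I\otimes I\otimes L_{\cdot}(x))$, etc., kills. Concretely, I would compute each bracket $r_{12}\prec r_{13}+r_{23}\cdot r_{12}+r_{13}\succ r_{23}$, $r_{13}\cdot r_{23}-r_{12}\succ r_{13}+r_{23}\prec r_{12}$, $r_{12}\cdot r_{13}+r_{23}\succ r_{12}-r_{13}\prec r_{23}$ (this last is $Q$ itself, so~\eqref{CD7} is immediate from the AD-YBE), and the difference in~\eqref{CD8}, and show that modulo $Q=0$ they are each expressible as a sum of terms each of which has $s=r+\tau(r)$ sitting in two adjacent legs so that the given operator prefactor — built precisely from the invariance operators of Definition~\ref{In}, tensored with an identity in the third slot — annihilates it. For instance, \eqref{CD8} is $(R_{\prec}(x)\otimes I\otimes I)(r_{23}\prec r_{12}+r_{13}\cdot r_{23}-r_{12}\succ r_{13}) = (I\otimes I\otimes L_{\succ}(x))(r_{12}\cdot r_{13}+r_{23}\succ r_{12}-r_{13}\prec r_{23})$; the right side's inner factor is $Q$, hence vanishes, and one must then show the left side also vanishes, which follows by rewriting $r_{23}\prec r_{12}+r_{13}\cdot r_{23}-r_{12}\succ r_{13}$ using the AD-YBE and invariance. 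For~\eqref{CD5} and~\eqref{CD6} I would likewise transform the bracket using $\tau(s)=s$ and the AD-YBE until the prefactor acts on a leg-pair carrying $s$. The main obstacle is precisely this set of tensor-component manipulations: correctly tracking which of the three legs carries $a_i$, $b_i$, $a_j$, $b_j$ under the operations $r_{12}\prec r_{13}$ etc., substituting $b_i=$ (first leg of $s$) $-$ (appropriate leg of $r$) where needed, and checking that every residual term either cancels against the AD-YBE expression or lands inside the kernel of the invariance operator $R_{\prec}(x)\otimes I+I\otimes L_{\cdot}(x)$ (resp.\ $R_{\cdot}(x)\otimes I+I\otimes L_{\succ}(x)$) acting on the relevant two legs of $s$. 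Once the correspondence between the six bracket identities and $\{Q=0,\ s\text{ invariant}\}$ is set up carefully, the rest is routine; I would organize the write-up by first recording the consequences of invariance (all four $\tau$-variants of~\eqref{IE3}--\eqref{IE4}), then handling~\eqref{CD3}--\eqref{CD4}, then~\eqref{CD7} (trivial), then~\eqref{CD5}, \eqref{CD6}, \eqref{CD8} in turn.
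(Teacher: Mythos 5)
Your proposal is correct and follows essentially the same route as the paper: reduce to the six conditions of Theorem \ref{YE3}, dispose of \eqref{CD3}--\eqref{CD4} using the invariance of $r+\tau(r)$ together with its $\tau$-transforms, and handle \eqref{CD5}--\eqref{CD8} by recognizing each inner bracket as a permutation of the AD-YBE element $D(r)=0$ modulo terms killed by invariance. The paper carries out the bookkeeping you defer by rewriting, e.g., $r_{12}\prec r_{13}+r_{23}\cdot r_{12}+r_{13}\succ r_{23}$ as $-\sigma_{123}D(r)$ plus terms of the form $(I\otimes L_{\cdot}(a_i)+R_{\prec}(a_i)\otimes I)(r+\tau(r))\otimes b_i$ and $(I\otimes I\otimes R_{\succ}(b_i)-L_{\prec}(b_i)\otimes I\otimes I)(\tau\otimes I)[a_i\otimes(r+\tau(r))]$, which vanish outright by invariance (not merely after applying the outer prefactor).
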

\begin{proof} Given the invariance of $r+\tau(r)$ is invariant, 
Eqs.~(\ref{IE8})-(\ref{IE13}) hold and
\begin{align*}
(R_{\prec}(x)\otimes I+I\otimes L_{\cdot}(x)) (r+\tau(r))=0, \ \ \ (R_{\cdot}(x)\otimes I+I\otimes L_{\succ}(x)) (r+\tau(r))=0,~\forall~x\in A.
\end{align*}
Thus Eq.~(\ref{CD4}) holds and
\begin{align*}&\langle \zeta\otimes\eta, [L_{\succ}( x)R_{\succ}(y)\otimes I-R_{\succ}(y)\otimes R_{\prec}( x)+I\otimes R_{\prec}(x)L_{\prec}(y)-L_{\succ}(x)\otimes L_{\prec}( y)](r+\tau (r))\rangle
\\=&\langle R_{\succ}^{*}(y)L_{\succ}^{*}( x)\zeta\otimes\eta - R_{\succ}^{*}(y)\zeta\otimes R_{\prec}^{*}( x)\eta
+\zeta\otimes L_{\prec}^{*}(y)R_{\prec}^{*}(x)\eta-L_{\succ}^{*}(x)\zeta\otimes L_{\prec}^{*}( y)\eta, 
r+\tau (r)\rangle
\\=&\langle T_{r+\tau (r)}(R_{\succ}^{*}(y)L_{\succ}^{*}( x)\zeta),\eta\rangle-
\langle T_{r+\tau (r)}(R_{\succ}^{*}(y)\zeta),R_{\prec}^{*}( x)\eta\rangle+
\langle T_{r+\tau (r)}(\zeta),L_{\prec}^{*}(y)R_{\prec}^{*}(x)\eta\rangle
\\&-\langle T_{r+\tau (r)}(L_{\succ}^{*}(x)\zeta),L_{\prec}^{*}(y)\eta\rangle
\\=&\langle - L_{\prec}(y)R_{\cdot}(x)T_{r+\tau (r)}(\zeta)-
 R_{\prec}(x)L_{\prec}(y)T_{r+\tau (r)}(\zeta)+  R_{\prec}(x)L_{\prec}(y)T_{r+\tau (r)}(\zeta)
+ L_{\prec}(y)R_{\cdot}(x)T_{r+\tau (r)}(\zeta),\eta  \rangle
\\=&0,\end{align*}
which implies that Eq.~(\ref{CD3}) holds.
Note that
\begin{align*}&(R_{\prec}(x)\otimes I \otimes I-I\otimes I\otimes L_{\succ}(x))(r_{12}\prec r_{13}+r_{23}\cdot r_{12}+r_{13}\succ r_{23})
\\=&(R_{\prec}(x)\otimes I \otimes I-I\otimes I\otimes L_{\succ}(x))[-\sigma_{123}D(r)
+r_{23}\cdot (r_{12}+r_{21})\\&+(r_{13}+(r_{12}+r_{21})\prec r_{13}+r_{31})\succ r_{23}-r_{21}\prec (r_{13}+r_{31})]
\\=&(R_{\prec}(x)\otimes I \otimes I-I\otimes I\otimes L_{\succ}(x))[
\sum_{i,j}(I\otimes L_{\cdot}(a_i)+R_{\prec}(a_i)\otimes I)(r+\tau(r)) \otimes b_i
\\&+(I\otimes I\otimes R_{\succ}(b_i)-L_{\prec}(b_i)\otimes I\otimes I)(\tau\otimes I)[a_i\otimes (r+\tau(r))]
\\=&0,\end{align*}
which indicates that Eq.~(\ref{CD5}) holds. Analogously, we can prove that Eqs.~(\ref{CD6})-(\ref{CD8}) hold.
Combining Theorem \ref{YE3}, we get the statement.
 \end{proof}

\begin{defi} \label{Qt1}
 Let $(A,\succ,\prec)$ be an anti-dendriform algebra and $r\in A\otimes A$. If $r$ is a solution of the AD-YBE and 
 $r+\tau(r)$ is invariant, then the 
anti-dendriform bialgebra $(A, \succ,\prec,\Delta_{\succ,r},\Delta_{\prec,r})$ induced by $r$ is called
  a {\bf quasi-triangular} anti-dendriform bialgebra. 
 In particular, if $r$ is skew-symmetric, $(A, \succ,\prec,\Delta_{\succ,r},\Delta_{\prec,r})$
is called a {\bf triangular} anti-dendriform bialgebra.
\end{defi}

\begin{pro} \label{Ya} Let $(A,\succ,\prec)$ be an anti-dendriform algebra and
$r\in A\otimes A$. Then  \begin{enumerate}
\item
 $r$ is a solution of the
AD-YBE $ D(r)=r_{12}\cdot r_{13}+r_{23}\succ r_{12}-r_{13}\prec r_{23}=0$ in $(A,\succ,\prec)$
if and only if the following equation holds:
 \begin{equation*}T_{\tau(r)}(\eta)\cdot T_{\tau(r)}(\zeta)=T_{\tau(r)}(R_{\prec}^{*}(T_{r}(\eta))\zeta
-L_{\succ}^{*}(T_{\tau(r)}(\zeta))\eta),~~\forall~\eta,\zeta\in A^{*}.
\end{equation*}
\item
 $r$ is a solution of the
AD-YBE $ D(r)=r_{12}\cdot r_{13}+r_{23}\succ r_{12}-r_{13}\prec r_{23}=0$ 
in $(A,\succ,\prec)$
 if and only if the following equation holds:
 \begin{equation*}T_{r}(\eta)\prec T_{r}(\zeta)=T_{r}(R_{\succ}^{*}(T_{r}(\eta))\zeta+
L_{\cdot}^{*}(T_{\tau(r)}(\zeta))\eta),~~\forall~\eta,\zeta\in A^{*}.
\end{equation*}
\item
 $r$ is a solution of the equation $ D_{1}(r)=r_{23}\cdot r_{12}+r_{13}\succ r_{23}+r_{12}\prec r_{13}=0$ 
 in $(A,\succ,\prec)$
 if and only if the following equation holds:
 \begin{equation*}T_{r}(\eta)\succ T_{r}(\zeta)=-T_{r}(L_{\prec}^{*}(T_{\tau(r)}(\zeta))\eta
+R_{\cdot}^{*}(T_{r}(\eta))\zeta),~~\forall~\eta,\zeta\in A^{*}.
\end{equation*}
\item
 $r$ is a solution of the equation $ D_{1}(r)=r_{23}\cdot r_{12}+r_{13}\succ r_{23}+r_{12}\prec r_{13}=0$
 in $(A,\succ,\prec)$
if and only if the following equation holds:
 \begin{equation*}T_{\tau(r)}(\eta)\prec T_{\tau(r)}(\zeta)=-T_{\tau(r)}(L_{\cdot}^{*}(T_{\tau(r)}(\zeta))\eta
+R_{\succ}^{*}(T_{r}(\eta))\zeta),~~\forall~\eta,\zeta\in A^{*}.
\end{equation*}
\item
 $r$ is a solution of the equation $ D_{2}(r)=r_{13}\cdot r_{23}-r_{12}\succ r_{13}+r_{23}\prec r_{12}=0$
 in $(A,\succ,\prec)$
if and only if the following equation holds:
 \begin{equation*}T_{r}(\eta)\cdot T_{r}(\zeta)=T_{r}(L_{\succ}^{*}(T_{\tau(r)}(\zeta))\eta
-R_{\prec}^{*}(T_{r}(\eta))\zeta),~~\forall~\eta,\zeta\in A^{*}.
\end{equation*}
\item
 $r$ is a solution of the equation $ D_{2}(r)=r_{13}\cdot r_{23}-r_{12}\succ r_{13}+r_{23}\prec r_{12}=0$
 in $(A,\succ,\prec)$
if and only if the following equation holds:
 \begin{equation*}T_{\tau(r)}(\eta)\succ T_{\tau(r)}(\zeta)=T_{\tau(r)}(L_{\prec}^{*}(T_{\tau(r)}(\zeta))\eta
+R_{\cdot}^{*}(T_{r}(\eta))\zeta),~~\forall~\eta,\zeta\in A^{*}.
\end{equation*}

 \end{enumerate}

\end{pro}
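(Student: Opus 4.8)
The plan is to turn each of the six equivalences into the statement that a single tensor in $A^{\otimes 3}$ vanishes, by pairing the claimed identity in $A$ with an arbitrary third element of $A^{*}$ and expanding everything in terms of the components of $r$. Write $r=\sum_i a_i\otimes b_i$, so that $T_{r}(\zeta)=\sum_i\langle a_i,\zeta\rangle b_i$ and $T_{\tau(r)}(\zeta)=\sum_i\langle b_i,\zeta\rangle a_i$, and in particular $\langle T_{r}(\mu),\xi\rangle=\langle r,\mu\otimes\xi\rangle$ and $\langle T_{\tau(r)}(\mu),\xi\rangle=\langle r,\xi\otimes\mu\rangle$. I will also use repeatedly the adjunction identities $\langle L_{\succ}(x)y,\xi\rangle=\langle y,L_{\succ}^{*}(x)\xi\rangle$ and $\langle R_{\succ}(y)x,\xi\rangle=\langle x,R_{\succ}^{*}(y)\xi\rangle$, together with their $\prec$- and $\cdot$-analogues.

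I treat part (a) in detail; the other five are the same computation with different relabelings. Pair both sides of the claimed identity with an arbitrary $\xi\in A^{*}$. Expanding $T_{\tau(r)}(\eta)$ and $T_{\tau(r)}(\zeta)$ on the left gives
\[\langle T_{\tau(r)}(\eta)\cdot T_{\tau(r)}(\zeta),\xi\rangle=\sum_{i,j}\langle b_i,\eta\rangle\langle b_j,\zeta\rangle\langle a_i\cdot a_j,\xi\rangle=\langle r_{12}\cdot r_{13},\xi\otimes\eta\otimes\zeta\rangle.\]
On the right, using $\langle T_{\tau(r)}(\mu),\xi\rangle=\langle r,\xi\otimes\mu\rangle$ and the adjunction identities, one computes termwise (after a relabeling $i\leftrightarrow j$ in the first)
\[\langle T_{\tau(r)}(R_{\prec}^{*}(T_{r}(\eta))\zeta),\xi\rangle=\langle r_{13}\prec r_{23},\xi\otimes\eta\otimes\zeta\rangle,\qquad\langle T_{\tau(r)}(L_{\succ}^{*}(T_{\tau(r)}(\zeta))\eta),\xi\rangle=\langle r_{23}\succ r_{12},\xi\otimes\eta\otimes\zeta\rangle.\]
Hence, by linearity of $T_{\tau(r)}$, the claimed equation for all $\eta,\zeta\in A^{*}$ is equivalent to $\langle r_{12}\cdot r_{13}+r_{23}\succ r_{12}-r_{13}\prec r_{23},\,\xi\otimes\eta\otimes\zeta\rangle=0$ for all $\xi,\eta,\zeta\in A^{*}$, that is, to $D(r)=0$ as in Eq.~\eqref{YE}.

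Parts (b)--(f) follow by exactly the same device, pairing with a third covector and matching the three resulting scalars with the three legs of the appropriate tensor equation. In (b) the pieces reassemble again into $D(r)$, now evaluated on the permuted triple $\eta\otimes\zeta\otimes\xi$; in (c) and (d) they reassemble into $D_{1}(r)=r_{23}\cdot r_{12}+r_{13}\succ r_{23}+r_{12}\prec r_{13}$, the expression appearing in Eq.~\eqref{CD5}; and in (e), (f) into $D_{2}(r)=r_{13}\cdot r_{23}-r_{12}\succ r_{13}+r_{23}\prec r_{12}$, the expression in Eq.~\eqref{CD6}. Each of the three equations admits two equivalent forms because one can choose to isolate either the first or the third tensor leg, which forces the operator $T_{r}$ in one version and $T_{\tau(r)}$ in the other (these genuinely differ since $r$ need not be skew-symmetric), and the signs in the stated identities are exactly the signs of the corresponding legs. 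There is no conceptual obstacle here; the only real work is the combinatorial bookkeeping — keeping track of which of $T_{r}$, $T_{\tau(r)}$ sits in each slot, of the index swaps $i\leftrightarrow j$, and of the permutations of $\{\xi,\eta,\zeta\}$ dictated by the conventions for $r_{12}\ast r_{13}$, $r_{23}\ast r_{12}$, etc. — which is precisely the computation already performed, one tensor slot at a time, in the proof of Theorem~\ref{YE3}.
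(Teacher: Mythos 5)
Your proof is correct and follows essentially the same route as the paper: both arguments pair the tensor identity $D(r)=0$ (resp.\ $D_1(r)=0$, $D_2(r)=0$) against a triple of covectors, use the definition of $T_r$, $T_{\tau(r)}$ and the adjunction identities for $L^*$, $R^*$ to identify each of the three terms with the corresponding term of the operator-form equation, and then observe that isolating the first versus the third tensor leg yields the two equivalent versions involving $T_{\tau(r)}$ and $T_r$ respectively. The only differences are cosmetic (direction of the computation and placement of the index relabeling $i\leftrightarrow j$), so no further comment is needed.
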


\begin{proof} According to Eq.~\eqref{To},  we have for all $\zeta,\eta,\theta\in A^{*}$,
\begin{align*}\langle \zeta\otimes \eta\otimes \theta,r_{12}\cdot r_{13}\rangle
&=\sum_{i,j}\langle \zeta\otimes \eta\otimes \theta,a_i\cdot a_j\otimes b_i\otimes b_j\rangle
\\&=\sum_{i,j}\langle \zeta,a_i\cdot a_j\rangle\langle \eta,b_i\rangle \langle \theta,b_j\rangle 
=\langle \zeta,T_{\tau(r)}(\eta)\cdot T_{\tau(r)}(\theta) \rangle
,\end{align*}
\begin{align*}\langle \zeta\otimes \eta\otimes \theta,r_{13}\prec r_{23}\rangle
&=\sum_{i,j}\langle \zeta\otimes \eta\otimes \theta,a_i \otimes a_j\otimes (b_i\prec b_j)\rangle
\\&=\sum_{i,j}\langle \zeta,a_i\rangle\langle \eta,a_j\rangle \langle \theta,b_i\prec b_j\rangle 
=\langle  \theta,T_{r}(\zeta) \prec T_{r}(\eta)\rangle
\\&=\langle  R_{\prec}^{*}(T_{r}(\eta))\theta,T_{r}(\zeta) \rangle
=\langle \zeta, T_{\tau(r)}(R_{\prec}^{*}(T_{r}(\eta))\theta) \rangle,\end{align*}
 \begin{align*}\langle \zeta\otimes \eta\otimes \theta,r_{23}\succ r_{12}
  \rangle
 &=\sum_{i,j}\langle \zeta\otimes \eta\otimes \theta,a_j \otimes (a_i\succ b_j)\otimes b_i\rangle
 \\& =\sum_{i,j}\langle \zeta,a_j\rangle\langle \eta,a_i\succ b_j\rangle \langle \theta,b_i\rangle 
 =\langle \eta,T_{\tau(r)}(\theta)\succ T_{r}(\zeta) \rangle
  \\&=\langle L_{\succ}^{*}(T_{\tau(r)}(\theta))\eta,T_{r}(\zeta)\rangle
 =\langle \zeta,T_{\tau(r)}(L_{\succ}^{*}(T_{\tau(r)}(\theta))\eta)\rangle.\end{align*}
 Thus, $r$ is a solution of the AD-YBE in $(A,\succ,\prec)$ is equivalent to that
\begin{equation*}T_{\tau(r)}(\eta)\cdot T_{\tau(r)}(\theta)=T_{\tau(r)}(R_{\prec}^{*}(T_{r}(\eta))\theta-L_{\succ}^{*}(T_{\tau(r)}(\theta))\eta).
\end{equation*}Similarly, Items (b)-(f) hold.

\end{proof}

\begin{thm}\label{Ya1} Let $(A,\succ,\prec)$ be an anti-dendriform algebra and
$r\in A\otimes A$. Assume that $r+\tau(r)$ is invariant. Then the following conditions hold:
\begin{enumerate}
\item $r$ is a solution of the AD-YBE $D(r)=0$ if and only if 
$r$ is a solution of the equations $D_1(r)=D_2(r)=0$.
\item $r$ is a solution of $D(r)=0$ if and only if 
$r$ is a solution of the equation $D(\tau(r))=r_{21}\cdot r_{31}+r_{32}\succ r_{21}-r_{31}\prec r_{32}=0$.
\end{enumerate}
\end{thm}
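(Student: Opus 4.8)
The plan is to translate every equation in the statement into the operator identities of Proposition~\ref{Ya} and to play them against the invariance relations, which are available here because $r+\tau(r)$ is invariant: by Proposition~\ref{Si} and the discussion after it, Eqs.~\eqref{IE8}--\eqref{IE13} all hold. Two facts drive the argument. First, $r\mapsto T_r$ is linear, so $T_{r+\tau(r)}=T_r+T_{\tau(r)}$. Second, Proposition~\ref{Ya} gives each of $D(r)=0$, $D_1(r)=0$, $D_2(r)=0$ two equivalent reformulations, one ``carried'' by $T_r$ and one by $T_{\tau(r)}$. For each equivalence I would compare the two forms (of the two equations being related) that are carried by different operators but whose right-hand sides are the image, under the respective $T$, of the \emph{same} covector of $A^*$. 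The difference of the two identities is then an assertion of the form $T_r(\text{stuff})-T_{\tau(r)}(\text{stuff})=T_{r+\tau(r)}(\text{that covector})$, which I would check directly using Eqs.~\eqref{IE8}--\eqref{IE13}: substituting $T_{r+\tau(r)}=T_r+T_{\tau(r)}$, all cross terms cancel and a tautology remains. Since that identity always holds, the two reformulated equations hold for all $\eta,\zeta$ simultaneously or fail simultaneously.

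\textbf{Item (a).} For $D(r)=0\Leftrightarrow D_1(r)=0$ I would pair the $\prec$-forms: Proposition~\ref{Ya}(b) for $D(r)=0$ (carried by $T_r$) with Proposition~\ref{Ya}(d) for $D_1(r)=0$ (carried by $T_{\tau(r)}$). Writing $u:=R_{\succ}^{*}(T_r(\eta))\zeta+L_{\cdot}^{*}(T_{\tau(r)}(\zeta))\eta$, these read $T_r(\eta)\prec T_r(\zeta)=T_r(u)$ and $T_{\tau(r)}(\eta)\prec T_{\tau(r)}(\zeta)=-T_{\tau(r)}(u)$, so their difference is the assertion $T_r(\eta)\prec T_r(\zeta)-T_{\tau(r)}(\eta)\prec T_{\tau(r)}(\zeta)=T_{r+\tau(r)}(u)$. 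I would verify this directly: applying Eq.~\eqref{IE11} to $T_{r+\tau(r)}(R_{\succ}^{*}(T_r(\eta))\zeta)$ and Eq.~\eqref{IE9} to $T_{r+\tau(r)}(L_{\cdot}^{*}(T_{\tau(r)}(\zeta))\eta)$ and expanding $T_{r+\tau(r)}=T_r+T_{\tau(r)}$, the cross term $T_r(\eta)\prec T_{\tau(r)}(\zeta)$ cancels, leaving exactly $T_r(\eta)\prec T_r(\zeta)-T_{\tau(r)}(\eta)\prec T_{\tau(r)}(\zeta)$. Hence Proposition~\ref{Ya}(b) and (d) are equivalent, i.e. $D(r)=0\Leftrightarrow D_1(r)=0$. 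Symmetrically, pairing the $\cdot$-forms Proposition~\ref{Ya}(a) (for $D(r)=0$, carried by $T_{\tau(r)}$) and Proposition~\ref{Ya}(e) (for $D_2(r)=0$, carried by $T_r$) with the covector $v:=R_{\prec}^{*}(T_r(\eta))\zeta-L_{\succ}^{*}(T_{\tau(r)}(\zeta))\eta$, and using Eqs.~\eqref{IE8}--\eqref{IE9}, gives $D(r)=0\Leftrightarrow D_2(r)=0$. Conjoining the two equivalences proves (a).

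\textbf{Item (b).} Since $T_{\tau(\tau(r))}=T_r$, Proposition~\ref{Ya}(a) applied to the element $\tau(r)$ in place of $r$ states that $D(\tau(r))=0$ is equivalent to
\begin{equation*}T_r(\eta)\cdot T_r(\zeta)=T_r\bigl(R_{\prec}^{*}(T_{\tau(r)}(\eta))\zeta-L_{\succ}^{*}(T_r(\zeta))\eta\bigr).\end{equation*}
On the other hand, by (a) we have $D(r)=0\Leftrightarrow D_2(r)=0$, and by Proposition~\ref{Ya}(e) the latter is equivalent to
\begin{equation*}T_r(\eta)\cdot T_r(\zeta)=T_r\bigl(L_{\succ}^{*}(T_{\tau(r)}(\zeta))\eta-R_{\prec}^{*}(T_r(\eta))\zeta\bigr).\end{equation*}
The two right-hand sides are $T_r$ evaluated on covectors whose difference is $L_{\succ}^{*}(T_{r+\tau(r)}(\zeta))\eta-R_{\prec}^{*}(T_{r+\tau(r)}(\eta))\zeta$, which vanishes by Eq.~\eqref{IE13}. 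Hence $D_2(r)=0\Leftrightarrow D(\tau(r))=0$, and combining with (a) gives $D(r)=0\Leftrightarrow D(\tau(r))=0$, which is (b).

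I expect the only delicate point to be choosing, at each step, the matching pair among the six forms of Proposition~\ref{Ya} and the correct one among Eqs.~\eqref{IE8}--\eqref{IE13}: the cancellation occurs precisely when the difference of the two covectors involves the symmetric combination $T_r+T_{\tau(r)}=T_{r+\tau(r)}$ rather than $T_r-T_{\tau(r)}$, for which no invariance relation is available. Once the right pair is selected each verification is a one-line linear computation; one must only keep the three right (resp. left) multiplications $R_{\cdot},R_{\succ},R_{\prec}$ apart, so that, for instance, $T_{r+\tau(r)}(R_{\succ}^{*}(x)\zeta)$ is rewritten via~\eqref{IE11} and $T_{r+\tau(r)}(R_{\prec}^{*}(x)\zeta)$ via~\eqref{IE8}.
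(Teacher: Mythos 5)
Your proof is correct, and it takes a genuinely different (dual) route from the paper's. The paper argues at the tensor level: one direction is the algebraic identity $D(r)=D_1(r)-D_2(r)$ (immediate from $\cdot=\succ+\prec$), and for the converse it rewrites $D_1(r)$ as $-\sigma_{123}D(r)$ plus tensor expressions of the form $(I\otimes L_{\cdot}(a_i)+R_{\prec}(a_i)\otimes I)(r+\tau(r))\otimes b_i$ and $(I\otimes I\otimes R_{\succ}(b_i)-L_{\prec}(b_i)\otimes I\otimes I)(\tau\otimes I)[a_i\otimes(r+\tau(r))]$, which vanish by invariance (the latter checked by pairing with covectors via Eq.~\eqref{IE12}); then $D_2(r)=D_1(r)-D(r)=0$, and item (b) is dismissed with ``similarly''. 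You instead push everything through the operator reformulations of Proposition~\ref{Ya}, pairing the two forms of each pair of equations that carry the same covector and showing their difference is an instance of Eqs.~\eqref{IE8}--\eqref{IE13} after expanding $T_{r+\tau(r)}=T_r+T_{\tau(r)}$. The two computations are dual to one another (pairing the paper's tensor identity against $\zeta\otimes\eta\otimes\theta$ essentially yields your operator identity), so neither is more general; but your organization buys two things: you obtain $D(r)=0\Leftrightarrow D_1(r)=0$ and $D(r)=0\Leftrightarrow D_2(r)=0$ independently without invoking $D=D_1-D_2$, and you give an explicit argument for item (b) --- Proposition~\ref{Ya}(a) applied to $\tau(r)$ combined with Eq.~\eqref{IE13} --- where the paper only asserts the analogy. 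I checked the two cancellations you describe (items (b) versus (d) of Proposition~\ref{Ya} using Eqs.~\eqref{IE11} and \eqref{IE9}, and items (a) versus (e) using Eqs.~\eqref{IE8} and \eqref{IE9}); both work exactly as claimed.
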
 

\begin{proof} 
If $r$ is a solution of $D_1(r)=D_2(r)=0$, then
$r$ is a solution of $D(r)=D_1(r)-D_2(r)=0$. On the other hand, assume that $r$ is a solution of $D(r)=0$. 
In view of Eq.~(\ref{IE12}), for all $\zeta,\eta,\theta\in A^{*}$ we have
\begin{align*} &\langle \zeta\otimes\eta \otimes\theta,\sum_{i}(I\otimes I\otimes R_{\succ}(b_i)-
L_{\prec}(b_i)\otimes I\otimes I)
(\tau\otimes I)[a_i\otimes (r+\tau(r))]\rangle
\\=&\sum_{i}\langle \eta\otimes\zeta\otimes R_{\succ}(b_i)^{*}\theta -\eta\otimes L_{\prec}(b_i)^{*}\zeta\otimes\theta,
a_i\otimes (r+\tau(r)) \rangle
\\=&\langle \zeta\otimes  R_{\succ}^{*}(T_{r}(\eta))\theta-L_{\prec}^{*}(T_{r}(\eta))\zeta\otimes\theta,r+\tau(r) \rangle
 \\=&\langle T_{r+\tau(r)}(\zeta),  R_{\succ}^{*}(T_{r}(\eta))\theta\rangle
 -\langle T_{r+\tau(r)}(L_{\prec}^{*}(T_{r}(\eta))\zeta),\theta\rangle
\\=&\langle R_{\succ}(T_{r}(\eta)) T_{r+\tau(r)}(\zeta)-T_{r+\tau(r)}(L_{\prec}^{*}(T_{r}(\eta))\zeta),  \theta\rangle
\\=&0.
\end{align*} 
Thus,
\begin{align*} D_1(r)=&-\sigma_{123}D(r)+r_{23}\cdot (r_{12}+r_{21})+(r_{13}+r_{31})\succ r_{23}-
r_{21}\prec (r_{31}+r_{13})+(r_{12}-r_{21})\prec r_{13}
\\=&-\sigma_{123}D(r)+\sum_{i}(I\otimes L_{\cdot}(a_i)+R_{\prec}(a_i)\otimes I)(r+\tau(r))\otimes b_i\\&+(I\otimes I\otimes R_{\succ}(b_i)-
L_{\prec}(b_i)\otimes I\otimes I)
(\tau\otimes I)[a_i\otimes (r+\tau(r))]\\=&0.
\end{align*} 
 It follows that $D_2(r)=D_1(r)-D(r)=0.$ In all, Item (a) holds. Similarly, Item (b) holds.
The proof is completed.
\end{proof}

\begin{thm}\label{QB0} Let $(A,\succ,\prec)$ be an anti-dendriform algebra and
$r\in A\otimes A$. Suppose that $r+\tau(r)$ is invariant. Then the following conditions are equivalent:
 \begin{enumerate}
\item $r$ is
a solution of the
AD-YBE in $(A,\succ,\prec)$.
 \item $(A^{*},\cdot_r)$ is an associative algebra and the linear maps
$T_{r},-T_{\tau(r)}$ are both associative algebra
 homomorphisms from $(A^{*},\cdot_r)$ to $(A,\cdot)$.
 \item $(A^{*},\succ_r,\prec_r)$ is an anti-dendriform algebra and the linear maps
$T_{r},-T_{\tau(r)}$ are both anti-dendriform algebra
 homomorphisms from $(A^{*},\succ_r,\prec_r)$ to $(A,\succ,\prec)$.
 \end{enumerate}
\end{thm}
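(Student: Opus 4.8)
The plan is to establish the cycle of implications $(a)\Rightarrow(c)\Rightarrow(b)\Rightarrow(a)$, using Proposition~\ref{Ya} as a translation device between the various Yang--Baxter-type equations and homomorphism conditions. First I would record the following dictionary: expanding the products $\succ_r,\prec_r,\cdot_r$ on $A^{*}$ from Proposition~\ref{Da1} (Eqs.~\eqref{IE1}, \eqref{IE2}, \eqref{IE0}) and merely renaming the dummy dual variables, each identity of Proposition~\ref{Ya} says exactly that $T_r$ or $-T_{\tau(r)}$ intertwines one product on $A$ with the corresponding product on $A^{*}$. Concretely, Proposition~\ref{Ya}(b) and (c) read $T_r(\zeta)\prec T_r(\eta)=T_r(\zeta\prec_r\eta)$ (equivalent to $D(r)=0$) and $T_r(\zeta)\succ T_r(\eta)=T_r(\zeta\succ_r\eta)$ (equivalent to $D_1(r)=0$); Proposition~\ref{Ya}(d) and (f) read the analogous two identities for $-T_{\tau(r)}$, using $(-u)\prec(-v)=u\prec v$ and $(-u)\succ(-v)=u\succ v$ (equivalent to $D_1(r)=0$ and $D_2(r)=0$); and Proposition~\ref{Ya}(a) and (e) read that $-T_{\tau(r)}$, respectively $T_r$, intertwines the associative products $\cdot$ and $\cdot_r$ (equivalent to $D(r)=0$, respectively $D_2(r)=0$). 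Since $r+\tau(r)$ is invariant, Theorem~\ref{Ya1}(a) makes $D(r)=0$, $D_1(r)=0$ and $D_2(r)=0$ simultaneously equivalent, so all six of these homomorphism identities stand or fall together with condition~$(a)$.

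For $(a)\Rightarrow(c)$: assume $r$ solves the AD-YBE. By the theorem proved just above (a solution of the AD-YBE whose symmetric part is invariant induces the anti-dendriform bialgebra $(A,\succ,\prec,\Delta_{\succ,r},\Delta_{\prec,r})$), combined with the remark following Definition~\ref{DB1} and with Proposition~\ref{Da1} (which identifies the dual anti-dendriform products with $\succ_r,\prec_r$), one gets that $(A^{*},\succ_r,\prec_r)$ is an anti-dendriform algebra. By Theorem~\ref{Ya1}(a) we also have $D_1(r)=D_2(r)=0$, hence all four $\succ$- and $\prec$-intertwining identities of the dictionary hold; that is, $T_r$ and $-T_{\tau(r)}$ are anti-dendriform algebra homomorphisms from $(A^{*},\succ_r,\prec_r)$ to $(A,\succ,\prec)$, which is exactly $(c)$.

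For $(c)\Rightarrow(b)$: the associated associative algebra of any anti-dendriform algebra is associative, so $(A^{*},\cdot_r)$ with $\cdot_r=\succ_r+\prec_r$ is associative; and since $\cdot=\succ+\prec$ on $A$, any linear map intertwining $\succ$ and $\prec$ automatically intertwines $\cdot$, so $T_r$ and $-T_{\tau(r)}$ are associative algebra homomorphisms from $(A^{*},\cdot_r)$ to $(A,\cdot)$. For $(b)\Rightarrow(a)$: the assertion that $-T_{\tau(r)}$ is an associative algebra homomorphism reads $T_{\tau(r)}(\zeta)\cdot T_{\tau(r)}(\eta)=-T_{\tau(r)}(\zeta\cdot_r\eta)$, and substituting Eq.~\eqref{IE0} turns this into precisely the identity of Proposition~\ref{Ya}(a); by that proposition it is equivalent to $D(r)=0$, i.e.\ to condition~$(a)$. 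This closes the cycle.

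All the substantive computation is already done in Propositions~\ref{Da1} and~\ref{Ya}, so the argument is essentially an assembly of those results together with Theorem~\ref{Ya1}. The points that need care are the systematic bookkeeping that matches the free dual variables in Proposition~\ref{Ya} with those in the homomorphism equations, and keeping track of where the clauses of $(b)$ and $(c)$ asserting that $A^{*}$ carries an algebra structure come from — they are supplied by the bialgebra theorem in the forward direction $(a)\Rightarrow(c)\Rightarrow(b)$ and are part of the hypotheses in $(b)\Rightarrow(a)$. I expect the only (minor) obstacle to be organizing this bookkeeping cleanly; the conceptual content is that invariance of $r+\tau(r)$ is exactly what collapses the three equations $D(r)=D_1(r)=D_2(r)=0$ to the single condition $(a)$, while the two maps $T_r$ and $-T_{\tau(r)}$ between them package these three equations.
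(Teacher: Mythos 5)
Your proposal is correct and follows exactly the route the paper takes: its proof of Theorem \ref{QB0} is the one-line statement ``Combining Proposition \ref{Ya}, Theorem \ref{Ya1} and Proposition \ref{Da1}, we deduce the conclusion,'' and your argument is precisely a careful unpacking of that combination, with the dictionary between the identities of Proposition \ref{Ya} and the homomorphism conditions worked out correctly (including the sign bookkeeping for $-T_{\tau(r)}$). No gaps.
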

\begin{proof} 
Combining Proposition \ref{Ya}, Theorem \ref{Ya1} and Proposition \ref{Da1}, we deduce
 the conclusion.
\end{proof}

\begin{cor} \label{QB} Let $(A,\succ,\prec,\Delta_{\succ,r},\Delta_{\prec,r})$ be a quasi-triangular anti-dendriform bialgebra.
 Then  \begin{enumerate}
\item $T_{r},-T_{\tau(r)}$ are both associative algebra
 homomorphisms from $(A^{*},\cdot_r)$ to $(A,\cdot)$.
 \item $T_{r},-T_{\tau(r)}$ are anti-dendriform algebra
 homomorphisms from $(A^{*},\succ_r,\prec_r)$ to $(A,\succ,\prec)$.
 \end{enumerate}
\end{cor}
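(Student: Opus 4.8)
The statement is an immediate consequence of the equivalence of (a) and (b)--(c) in Theorem \ref{QB0}, so the proof is essentially a matter of checking that the hypotheses of that theorem are met. Recall that a quasi-triangular anti-dendriform bialgebra is, by Definition \ref{Qt1}, precisely an anti-dendriform bialgebra of the form $(A,\succ,\prec,\Delta_{\succ,r},\Delta_{\prec,r})$ arising from an $r\in A\otimes A$ which is a solution of the AD-YBE and for which $r+\tau(r)$ is invariant. These are exactly the two standing hypotheses in Theorem \ref{QB0}: invariance of $r+\tau(r)$ is the blanket assumption there, and ``$r$ is a solution of the AD-YBE'' is condition (a).

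The proof therefore runs as follows. First I would invoke Definition \ref{Qt1} to extract from the hypothesis the two facts that $r+\tau(r)$ is invariant and that $r$ solves the AD-YBE in $(A,\succ,\prec)$. Second, with $r+\tau(r)$ invariant, Theorem \ref{QB0} applies, and since $r$ satisfies condition (a) of that theorem, conditions (b) and (c) hold as well. Condition (b) of Theorem \ref{QB0} says exactly that $(A^{*},\cdot_r)$ is associative and that $T_r$ and $-T_{\tau(r)}$ are associative algebra homomorphisms from $(A^{*},\cdot_r)$ to $(A,\cdot)$, which is item (a) of the corollary; condition (c) says that $(A^{*},\succ_r,\prec_r)$ is an anti-dendriform algebra and that $T_r$ and $-T_{\tau(r)}$ are anti-dendriform algebra homomorphisms from $(A^{*},\succ_r,\prec_r)$ to $(A,\succ,\prec)$, which is item (b). One should also note in passing that, since the bialgebra is coboundary with comultiplications given by Eqs.~(\ref{CD1})--(\ref{CD2}), the operations $\succ_r,\prec_r,\cdot_r$ on $A^{*}$ appearing here are the ones computed in Proposition \ref{Da1}, so there is no ambiguity in the statement.

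There is essentially no obstacle: this corollary is a packaging of Theorem \ref{QB0} specialized to the quasi-triangular case, and all the analytic content has already been absorbed into Proposition \ref{Ya}, Theorem \ref{Ya1}, and Proposition \ref{Da1}. The only point requiring any care is purely bookkeeping, namely making sure that the algebra structures $\cdot_r$ and $(\succ_r,\prec_r)$ named in the corollary coincide with those produced by the coboundary construction, which is exactly the content of Proposition \ref{Da1} and hence needs only a one-line remark.
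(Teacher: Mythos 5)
Your proposal is correct and matches the paper's proof exactly: the paper also derives the corollary directly from Theorem \ref{QB0} together with Definition \ref{Qt1}, with the operations on $A^{*}$ being those of Proposition \ref{Da1}. No further comment is needed.
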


\begin{proof} It follows directly from Theorem \ref{QB0} and Definition \ref{Qt1}.
\end{proof}

\section{Factorizable anti-dendriform bialgebras}
In this section, we introduce the notion of factorizable 
anti-dendriform bialgebras, which is a special class of quasi-triangular anti-dendriform bialgebras.
We show that the double of an anti-dendriform bialgebra
is naturally a factorizable anti-dendriform bialgebra.

\begin{defi} \label{Qt} A quasi-triangular anti-dendriform bialgebra $(A, \succ,\prec, \Delta_{\succ,r}, \Delta_{\prec,r})$ is
called factorizable if  
  $T_{r+\tau(r)}:A^{*}\longrightarrow A$ given by Eq. (\ref{To}) is a linear isomorphism of vector spaces 
where $\Delta_{\succ,r}, \Delta_{\prec,r}$
are defined by Eqs.~(\ref{CD1})-(\ref{CD2}).\end{defi}

\begin{pro} \label{Qf} Assume that $(A, \succ,\prec, \Delta_{\succ,r}, \Delta_{\prec,r})$ is a quasi-triangular
 (factorizable) anti-dendriform bialgebra. Then $(A, \succ,\prec, \Delta_{\succ,\tau(r)}, \Delta_{\prec,\tau(r)})$
 is also a quasi-triangular
 (factorizable) anti-dendriform bialgebra.\end{pro}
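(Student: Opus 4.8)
The plan is to show that the two defining properties of a quasi-triangular (resp.\ factorizable) anti-dendriform bialgebra are stable under the substitution $r \mapsto \tau(r)$. Recall that $(A,\succ,\prec,\Delta_{\succ,r},\Delta_{\prec,r})$ being quasi-triangular means precisely that $r$ solves the AD-YBE and that $r+\tau(r)$ is invariant; being factorizable adds that $T_{r+\tau(r)}$ is a linear isomorphism. So I need to verify: (i) $\tau(r)$ solves the AD-YBE; (ii) $\tau(r)+\tau(\tau(r)) = \tau(r)+r = r+\tau(r)$ is invariant; and (iii) $T_{\tau(r)+\tau(\tau(r))} = T_{r+\tau(r)}$ is an isomorphism. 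Points (ii) and (iii) are immediate since $\tau(r)+\tau(\tau(r))$ is literally the same element $r+\tau(r)$ (using $\tau^2 = \mathrm{id}$), so the invariance hypothesis and the isomorphism hypothesis transfer verbatim. The only real content is point (i).

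For (i), the key input is Theorem \ref{Ya1}(b): when $r+\tau(r)$ is invariant, $r$ solves $D(r)=0$ if and only if $D(\tau(r)) = r_{21}\cdot r_{31} + r_{32}\succ r_{21} - r_{31}\prec r_{32} = 0$. I would first observe that $D(\tau(r))$ — the expression obtained by replacing each $a_i\otimes b_i$ in $D(r) = r_{12}\cdot r_{13} + r_{23}\succ r_{12} - r_{13}\prec r_{23}$ with $b_i\otimes a_i$ — is exactly the AD-YBE expression $D(\cdot)$ evaluated at $\tau(r)$ in $(A,\succ,\prec)$; this is just unwinding the bookkeeping of the subscript conventions $r_{ij}\ast r_{kl}$ introduced before Theorem \ref{YE3}. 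Hence Theorem \ref{Ya1}(b) says precisely that, under the invariance of $r+\tau(r)$, $r$ is a solution of the AD-YBE iff $\tau(r)$ is. Since we are given that $r$ solves the AD-YBE and $r+\tau(r)$ is invariant, we conclude $\tau(r)$ solves the AD-YBE.

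Having established (i), (ii), (iii), the conclusion follows: $\tau(r)$ solves the AD-YBE, $\tau(r)+\tau(\tau(r))=r+\tau(r)$ is invariant, so by Definition \ref{Qt1} the bialgebra $(A,\succ,\prec,\Delta_{\succ,\tau(r)},\Delta_{\prec,\tau(r)})$ (which is a genuine anti-dendriform bialgebra by the theorem preceding Definition \ref{Qt1}) is quasi-triangular; and if moreover $T_{r+\tau(r)} = T_{\tau(r)+\tau(\tau(r))}$ is an isomorphism, then by Definition \ref{Qt} it is factorizable.

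I do not expect any genuine obstacle here; the proposition is essentially a symmetry observation. The one point that needs a sentence of care is confirming that the element called "$\tau(r)$" in the subscript-index expression $D(\tau(r)) = r_{21}\cdot r_{31}+r_{32}\succ r_{21}-r_{31}\prec r_{32}$ of Theorem \ref{Ya1}(b) really does coincide with plugging $\tau(r)$ into the map $D$, rather than being some other rearrangement — but this is exactly the content of how those indexed products were defined, so it is a matter of citing the definitions rather than computing. A secondary small check is that $\Delta_{\succ,\tau(r)},\Delta_{\prec,\tau(r)}$ as given by Eqs.~\eqref{CD1}--\eqref{CD2} are the comultiplications actually attached to the quasi-triangular structure built from $\tau(r)$, which is true by construction.
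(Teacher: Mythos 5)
Your proof is correct and follows essentially the same route as the paper, which simply cites Theorem \ref{Ya1} (for the equivalence $D(r)=0 \Leftrightarrow D(\tau(r))=0$ under invariance of $r+\tau(r)$) together with the definitions; you have merely spelled out the details, including the observation that $\tau(r)+\tau(\tau(r))=r+\tau(r)$ makes the invariance and isomorphism conditions transfer verbatim. No changes needed.
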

 \begin{proof} This conclusion follows from Item (b) of Theorem \ref{Ya1} and Definition \ref{Qt}.
 \end{proof}
 
\begin{pro}
Let $(A, \succ,\prec, \Delta_{\succ,r}, \Delta_{\prec,r})$ be a factorizable anti-dendriform bialgebra.
Then $\mathrm{Im}(T_{r}\oplus T_{\tau(r)})$ is an anti-dendriform 
 subalgebra of the direct sum anti-dendriform  
algebra $A\oplus A$,
which is isomorphic to the anti-dendriform algebra $(A^{*},\succ_{r},\prec_{r})$. Furthermore, any $x\in A$ has a unique
decomposition $x=x_1-x_2$, where $(x_1,x_2)\in \mathrm{Im}(T_{r}\oplus T_{\tau(r)})$ and
\begin{equation*}T_{r}\oplus T_{\tau(r)}:A^{*}\longrightarrow A\oplus A, \ \ 
(T_{r}\oplus T_{\tau(r)})(\zeta)=(T_{r}(\zeta), -T_{\tau(r)}(\zeta)),~\forall~\zeta\in A^{*}. \end{equation*}
\end{pro}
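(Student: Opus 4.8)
The plan is to package the two maps $T_{r}$ and $-T_{\tau(r)}$ into a single map $\phi:=T_{r}\oplus T_{\tau(r)}\colon A^{*}\to A\oplus A$, $\phi(\zeta)=(T_{r}(\zeta),-T_{\tau(r)}(\zeta))$, where $A\oplus A$ carries the component-wise direct sum anti-dendriform structure, and to show that $\phi$ is an injective homomorphism of anti-dendriform algebras. Once this is done, $\mathrm{Im}(\phi)$ is automatically an anti-dendriform subalgebra of $A\oplus A$ and $\phi$ corestricts to an isomorphism $(A^{*},\succ_{r},\prec_{r})\cong\mathrm{Im}(\phi)$, which gives the first two assertions. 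I would get the homomorphism property directly from Corollary~\ref{QB}: since the bialgebra is quasi-triangular, $T_{r}$ and $-T_{\tau(r)}$ are each anti-dendriform algebra homomorphisms from $(A^{*},\succ_{r},\prec_{r})$ to $(A,\succ,\prec)$, so comparing the two coordinates shows $\phi(\zeta\succ_{r}\eta)=\phi(\zeta)\succ\phi(\eta)$ and $\phi(\zeta\prec_{r}\eta)=\phi(\zeta)\prec\phi(\eta)$ for all $\zeta,\eta\in A^{*}$. For injectivity I would record the elementary identity $T_{r+\tau(r)}=T_{r}+T_{\tau(r)}$ (immediate from Eq.~\eqref{To}): if $\phi(\zeta)=0$ then $T_{r}(\zeta)=T_{\tau(r)}(\zeta)=0$, hence $T_{r+\tau(r)}(\zeta)=0$, and factorizability (Definition~\ref{Qt}) makes $T_{r+\tau(r)}$ a linear isomorphism, so $\zeta=0$.

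For the decomposition, I would argue as follows. Given $x\in A$, factorizability provides a unique $\zeta\in A^{*}$ with $T_{r+\tau(r)}(\zeta)=x$, that is, $T_{r}(\zeta)+T_{\tau(r)}(\zeta)=x$. Setting $x_{1}:=T_{r}(\zeta)$ and $x_{2}:=-T_{\tau(r)}(\zeta)$ we get $(x_{1},x_{2})=\phi(\zeta)\in\mathrm{Im}(\phi)$ and $x_{1}-x_{2}=x$, which settles existence. For uniqueness, every element of $\mathrm{Im}(\phi)$ has the form $(T_{r}(\zeta'),-T_{\tau(r)}(\zeta'))$, and its first coordinate minus its second equals $T_{r}(\zeta')+T_{\tau(r)}(\zeta')=T_{r+\tau(r)}(\zeta')$; hence the condition $x_{1}-x_{2}=x$ forces $T_{r+\tau(r)}(\zeta')=x$, so $\zeta'=\zeta$ by injectivity of $T_{r+\tau(r)}$, and therefore the pair $(x_{1},x_{2})$ is uniquely determined.

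This is essentially bookkeeping once Corollary~\ref{QB} is in hand. The only small points to verify are the identity $T_{r+\tau(r)}=T_{r}+T_{\tau(r)}$ and the fact that a pair of anti-dendriform homomorphisms into $A$ assembles into one homomorphism into the component-wise direct sum $A\oplus A$; neither is a genuine obstacle. What the proof really highlights is that the factorizability hypothesis enters in two places — it is precisely what makes $\phi$ injective (so that $\mathrm{Im}(\phi)\cong(A^{*},\succ_{r},\prec_{r})$) and what makes $T_{r+\tau(r)}$ surjective (so that every $x\in A$ decomposes) — and both conclusions fail in general for a quasi-triangular bialgebra that is not factorizable.
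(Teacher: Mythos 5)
Your proposal is correct and follows essentially the same route as the paper: invoke Corollary \ref{QB} for the homomorphism property of $T_{r}$ and $-T_{\tau(r)}$, use the invertibility of $T_{r+\tau(r)}$ from factorizability to get the isomorphism onto the image, and obtain the decomposition by applying $T_{r}$ and $-T_{\tau(r)}$ to $T_{r+\tau(r)}^{-1}(x)$. Your write-up is in fact slightly more complete than the paper's, since you spell out the injectivity of $T_{r}\oplus T_{\tau(r)}$ and the uniqueness of the decomposition, which the paper leaves implicit.
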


\begin{proof} In view of Corollary \ref{QB}, $T_{r},-T_{\tau(r)}$ are anti-dendriform
 algebra 
homomorphisms and 
$T_{r+\tau(r)}$ is a linear isomorphism. It follows that
$\mathrm{Im}(T_{r}\oplus T_{\tau(r)})$ is an anti-dendriform subalgebra of the direct sum anti-dendriform
algebra $A\oplus A$ and $\mathrm{Im}(T_{r}\oplus T_{\tau(r)})\simeq A^{*}$ as anti-dendriform
algebras. Due to $T_{r+\tau(r)}$ being an isomorphism,
we get 
\begin{equation*}T_{r}T_{r+\tau(r)}^{-1}(x)+T_{\tau(r)}T_{r+\tau(r)}^{-1}(x)=T_{r+\tau(r)}T_{r+\tau(r)}^{-1}(x)=x,\end{equation*}
that is, $x=x_1-x_2$ with $x_1=T_{r}T_{r+\tau(r)}^{-1}(x)$ and $x_2=-T_{\tau(r)}T_{r+\tau(r)}^{-1}(x)$. 
The proof is finished.
\end{proof}
\begin{pro}
 Let $(A, \succ_{A},\prec_{A}, \Delta_{\succ,r}, \Delta_{\prec,r})$ 
 be a factorizable anti-dendriform bialgebra. Then the double anti-dendriform 
algebra $(D=A\oplus A^{*}, \succ_D,\prec_D)$
is isomorphic to the direct sum $A\oplus A$ of anti-dendriform algebras.
\end{pro}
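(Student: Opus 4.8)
The plan is to write down an explicit isomorphism and verify it. Define
\[
\varphi:D=A\oplus A^{*}\longrightarrow A\oplus A,\qquad
\varphi(x+\zeta)=\bigl(x+T_{r}(\zeta),\,x-T_{\tau(r)}(\zeta)\bigr),\qquad x\in A,\ \zeta\in A^{*},
\]
where $A\oplus A$ carries the componentwise anti-dendriform algebra structure and $T_{r},T_{\tau(r)}$ are given by \eqref{To}. First I would check that $\varphi$ is a linear isomorphism: if $\varphi(x+\zeta)=0$, then $x+T_{r}(\zeta)=0=x-T_{\tau(r)}(\zeta)$, and subtracting yields $T_{r+\tau(r)}(\zeta)=T_{r}(\zeta)+T_{\tau(r)}(\zeta)=0$, hence $\zeta=0$ because the bialgebra is factorizable, and then $x=0$. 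Since $\dim D=2\dim A=\dim(A\oplus A)$, $\varphi$ is bijective. Note that $\varphi$ restricts to the diagonal embedding $x\mapsto(x,x)$ on $A$ and to $\zeta\mapsto\bigl(T_{r}(\zeta),-T_{\tau(r)}(\zeta)\bigr)$ on $A^{*}$.

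It then remains to show that $\varphi$ intertwines $\succ_{D},\prec_{D}$ with the componentwise products; by bilinearity it suffices to check this on the four types of homogeneous pairs of arguments. For $x,y\in A$ one has $x\succ_{D}y=x\succ_{A}y$ and $x\prec_{D}y=x\prec_{A}y$, so both sides of each identity equal $(x\succ_{A}y,x\succ_{A}y)$, respectively $(x\prec_{A}y,x\prec_{A}y)$. For $\zeta,\eta\in A^{*}$, the restrictions of $\succ_{D},\prec_{D}$ to $A^{*}$ are the products $\succ_{r},\prec_{r}$ of \eqref{IE1}--\eqref{IE2}, and by Corollary~\ref{QB} the maps $T_{r}$ and $-T_{\tau(r)}$ are anti-dendriform homomorphisms $(A^{*},\succ_{r},\prec_{r})\to(A,\succ,\prec)$; therefore
\[
\varphi(\zeta\succ_{r}\eta)=\bigl(T_{r}(\zeta\succ_{r}\eta),\,-T_{\tau(r)}(\zeta\succ_{r}\eta)\bigr)
=\bigl(T_{r}(\zeta)\succ_{A}T_{r}(\eta),\,T_{\tau(r)}(\zeta)\succ_{A}T_{\tau(r)}(\eta)\bigr)=\varphi(\zeta)\succ\varphi(\eta),
\]
and similarly for $\prec$.

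The substantive case is a mixed pair, say $x\in A$ and $\eta\in A^{*}$ (the case $\zeta\in A^{*}$, $y\in A$ being entirely analogous). From \eqref{Db1}--\eqref{Db2}, with $\succ_{A^{*}}=\succ_{r}$ and $\prec_{A^{*}}=\prec_{r}$,
\[
x\succ_{D}\eta=L_{\prec_{r}}^{*}(\eta)\,x-\bigl(R_{\prec}^{*}+R_{\succ}^{*}\bigr)(x)\,\eta,\qquad
x\prec_{D}\eta=-\bigl(L_{\prec_{r}}^{*}+L_{\succ_{r}}^{*}\bigr)(\eta)\,x+R_{\succ}^{*}(x)\,\eta,
\]
where the first summand on each right-hand side lies in $A$ and the second in $A^{*}$. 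Applying $\varphi$ and comparing with $\varphi(x)\succ\varphi(\eta)=\bigl(x\succ_{A}T_{r}(\eta),\,-x\succ_{A}T_{\tau(r)}(\eta)\bigr)$, the identity in the first coordinate should follow directly from the definitions \eqref{IE1}--\eqref{IE2} of $\succ_{r},\prec_{r}$ and the relation $T_{r}^{*}=T_{\tau(r)}$ (pairing with an arbitrary $\zeta\in A^{*}$, the left side becomes $\langle x\succ_{A}T_{r}(\eta),\zeta\rangle$ once the two terms carrying $T_{\tau(r)}(\zeta)\cdot x$ cancel), and it uses no invariance hypothesis. The identity in the second coordinate, after pairing with $\zeta\in A^{*}$ and using $T_{r}+T_{\tau(r)}=T_{r+\tau(r)}$, should reduce precisely to
\[
\langle x\succ_{A}T_{r+\tau(r)}(\eta),\,\zeta\rangle=-\langle T_{r+\tau(r)}(\zeta)\cdot x,\,\eta\rangle,
\]
which is one of the invariance identities in \eqref{IE10}; the parallel computation for $\prec_{D}$ should reduce in the same way to \eqref{IE13}. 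Once all four cases are settled, $\varphi$ preserves $\succ$ and $\prec$ and is the desired isomorphism of anti-dendriform algebras.

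I expect the only real difficulty to be organizational: the double products mix the adjoint operators $L_{\succ}^{*},L_{\prec}^{*},R_{\succ}^{*},R_{\prec}^{*}$ on $A$ with their counterparts on $A^{*}$, so the mixed-pair verification requires matching roughly a dozen terms carefully. Conceptually, however, everything collapses once one pairs against a test covector, substitutes the formulas \eqref{IE1}--\eqref{IE2} for $\succ_{r},\prec_{r}$, and feeds in the invariance of $r+\tau(r)$.
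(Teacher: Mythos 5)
Your proposal is correct and follows essentially the same route as the paper: the same map $\varphi(x+\zeta)=(x+T_{r}(\zeta),\,x-T_{\tau(r)}(\zeta))$, the first coordinate of the mixed products handled by the definitions of $\succ_{r},\prec_{r}$, and the second coordinate by the invariance of $r+\tau(r)$. If anything you are more complete than the paper, which only writes out the mixed cases and leaves the $A^{*}$--$A^{*}$ case (your appeal to Corollary~\ref{QB}) and the bijectivity of $\varphi$ implicit.
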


\begin{proof} By Definition \ref{Qt1} and Definition \ref{Qt}, 
$T_{r+\tau(r)}$ is a linear isomorphism and $r+\tau(r)$ is invariant.
Define $\varphi:A\oplus A^{*}\longrightarrow A\oplus A$ by
\begin{equation}\label{FD1}\varphi(x,\zeta)=(x+T_{r}(\zeta),x-T_{\tau(r)}(\zeta)),~\forall~(x,\zeta)\in A\oplus A^{*}.\end{equation}
It is obvious that $\varphi$ is a bijection.
 By Eq.~(\ref{IE2}), we get
\begin{align*}&\langle\eta,
T_{r}(-(R_{\prec_A}^{*}+R_{\succ_A}^{*})(x)\zeta)+L_{\prec_{A^*}}^{*}(\zeta)x\rangle
\\=& -\langle T_{\tau(r)}(\eta), (R_{\prec_A}^{*}+R_{\succ_A}^{*})(x)\zeta
\rangle+\langle \zeta\prec_r\eta,x\rangle
\\=&-\langle \zeta,T_{\tau(r)}(\eta)\cdot_A x\rangle +
\langle L_{\cdot_A}^{*}(T_{\tau(r)}(\eta))\zeta+
R_{\succ}^{*}(T_{r}(\zeta))\eta,x\rangle
\\=&-\langle \zeta,T_{\tau(r)}(\eta)\cdot_A x\rangle +
\langle \zeta,T_{\tau(r)}(\eta)\cdot_A x\rangle+\langle \eta
, x\succ_A T_{r}(\zeta)\rangle
\\=&\langle \eta, x\succ_A T_{r}(\zeta)\rangle, 
\end{align*}
which yields that 
\begin{equation}\label{FD2}T_{r}(-(R_{\prec_A}^{*}+R_{\succ_A}^{*})(x)\zeta)+L_{\prec_{A^*}}^{*}(\zeta)x
=x\succ_A T_{r}(\zeta).\end{equation}
Analogously,  
\begin{equation}\label{FD3}T_{\tau(r)}((R_{\prec_A}^{*}+R_{\succ_A}^{*})(x)\zeta)+L_{\prec_{A^*}}^{*}(\zeta)x
=-x\succ_A T_{\tau(r)}(\zeta).\end{equation}
According to Eqs. (\ref{Db1}) and (\ref{FD1})-(\ref{FD3}),
\begin{align*}&\varphi(x\succ_D \zeta)=\varphi(L_{\prec_{A^*}}^{*}(\zeta)x
-(R_{\prec_A}^{*}+R_{\succ_A}^{*})(x)\zeta)\\
=&(T_{r}(-(R_{\prec_A}^{*}+R_{\succ_A}^{*})(x)\zeta)+L_{\prec_{A^*}}^{*}(\zeta)x,
T_{\tau(r)}((R_{\prec_A}^{*}+R_{\succ_A}^{*})(x)\zeta+L_{\prec_{A^*}}^{*}(\zeta)x)
\\=&(x\succ_A T_{r}(\zeta),-x\succ_A T_{\tau(r)}(\zeta))
=(x,x)\succ (T_{r}(\zeta),-T_{\tau(r)}(\zeta))
\\=&\varphi(x)\succ_A \varphi(\zeta).
\end{align*}
By the same token, $\varphi(\zeta\succ_D x)=\varphi(\zeta)\succ_A \varphi(x) $. Thus,
$\varphi((x,\zeta)\succ_D (y,\eta))=\varphi(x,\zeta)\succ \varphi(y,\eta) $ for all
$(x,\zeta), (y,\eta)\in A\oplus A^{*}$.
Taking the same procedure, we can prove that
$\varphi((x,\zeta)\prec_D (y,\eta))=\varphi(x,\zeta)\prec \varphi(y,\eta) $.
Thus, $\varphi$ is an anti-dendriform algebra isomorphism.
The proof is completed.
\end{proof}

\begin{thm} \label{Ft}
Assume that $(A, \succ_A,\prec_A, \Delta_{\succ,r}, \Delta_{\prec,r})$ is an anti-dendriform
bialgebra and $(D=A\oplus A^{*},\succ_{D},\prec_{D})$ is the double anti-dendriform algebra given in Section 2. 
 Assume that $\{e_1,...,e_n\}$ is a basis of $A$ and $\{e^{*}_1,...,e^{*}_n\}$ is the dual basis.
 Let \begin{equation*}r=\sum_{i=1}^{n}e_{i}\otimes e_{i}^{*}\in A\otimes A^{*}\subseteq D\otimes D.\end{equation*}
Then $(D,\succ_D,\prec_D, \Delta_{\succ,r},\Delta_{\prec,r})$ with 
$\Delta_{\succ,r},\Delta_{\prec,r}$ given by Eqs.~(\ref{CD1})-(\ref{CD2})
 is a factorizable anti-dendriform
bialgebra.

\end{thm}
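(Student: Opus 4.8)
The plan is to verify the two defining properties of a factorizable anti-dendriform bialgebra for the pair $(D,r)$: first, that $r=\sum_i e_i\otimes e_i^{*}$ is a solution of the AD-YBE in $(D,\succ_D,\prec_D)$ with $r+\tau(r)$ invariant (so that $(D,\succ_D,\prec_D,\Delta_{\succ,r},\Delta_{\prec,r})$ is a quasi-triangular anti-dendriform bialgebra), and second, that $T_{r+\tau(r)}:D^{*}\to D$ is a linear isomorphism. For the latter, note that under the identification $D^{*}\cong A^{*}\oplus A$ coming from $D=A\oplus A^{*}$, the element $r+\tau(r)=\sum_i(e_i\otimes e_i^{*}+e_i^{*}\otimes e_i)$ is exactly the canonical symmetric copairing attached to the pairing between $A$ and $A^{*}$; hence $T_{r+\tau(r)}$ is (up to the canonical identifications) the identity-type map sending $(\zeta,x)\in A^{*}\oplus A$ to $(x,\zeta)\in A\oplus A^{*}$, which is manifestly bijective. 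So the isomorphism clause is essentially automatic once $D$ and $D^{*}$ are identified correctly; the substantive work is the quasi-triangularity.

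For the AD-YBE in $D$, I would exploit the structure of $r$: since $r=\sum_i e_i\otimes e_i^{*}$ with $e_i\in A$ and $e_i^{*}\in A^{*}$, the operator $T_r:D^{*}\to D$ restricts to the canonical map $A^{*}\to A^{*}\subseteq D$ (the inclusion) composed with the pairing, and $T_{\tau(r)}:D^{*}\to D$ restricts to the inclusion $A\hookrightarrow D$. By Theorem \ref{QB0}, the AD-YBE for $r$ in $D$ (granting invariance of $r+\tau(r)$) is equivalent to $T_r$ and $-T_{\tau(r)}$ being anti-dendriform algebra homomorphisms from $(D^{*},\succ_r,\prec_r)$ to $(D,\succ_D,\prec_D)$. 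The key observation is that the anti-dendriform algebra $(D^{*},\succ_r,\prec_r)$ built from $r$ via Proposition \ref{Da1} is precisely $(D,\succ_D,\prec_D)$ transported through the pairing $D\cong D^{*}$: the formulas in Eqs.~(\ref{IE1})-(\ref{IE2}) for $\succ_r,\prec_r$, when $T_r,T_{\tau(r)}$ are the two halves of the canonical identification $D^{*}\cong D$, reproduce exactly the double-algebra multiplications $\succ_D,\prec_D$ in Eqs.~(\ref{Db1})-(\ref{Db2}). Concretely, I would check that $T_r$ corresponds to the projection-inclusion $A^{*}\hookrightarrow D$ and $-T_{\tau(r)}$ to the projection-inclusion $A\hookrightarrow D$, and that both are anti-dendriform homomorphisms because $A$ and $A^{*}$ are anti-dendriform subalgebras of $D$ — this is immediate from the definition of $\succ_D,\prec_D$ restricted to each factor. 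Invariance of $r+\tau(r)$ should be verified directly from Definition \ref{In} using Eqs.~(\ref{Db1})-(\ref{Db2}): one computes $(R_{\prec_D}(d)\otimes I+I\otimes L_{\cdot_D}(d))(r+\tau(r))$ for $d=x+a\in D$ and checks the $A\otimes A^{*}$, $A^{*}\otimes A$, $A\otimes A$, $A^{*}\otimes A^{*}$ components vanish; the cross terms cancel precisely because the $\succ_D,\prec_D$ formulas pair the coadjoint-type actions of $A$ on $A^{*}$ against those of $A^{*}$ on $A$.

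The main obstacle I anticipate is the bookkeeping in the invariance computation: one must carefully expand $L_{\cdot_D}$ and $R_{\prec_D}$ on a general element of $D$ using the somewhat intricate double-algebra formulas (\ref{Db1})-(\ref{Db2}), track how each dual-representation operator $L_{\prec_{A^{*}}}^{*}$, $R_{\succ_A}^{*}$, etc.\ acts on the tensor $\sum_i e_i\otimes e_i^{*}+e_i^{*}\otimes e_i$, and confirm that the resulting terms organize into canceling pairs. A cleaner route, which I would prefer, is to observe that invariance of $r+\tau(r)$ in $D$ is equivalent, via Proposition \ref{Si}, to the identities (\ref{IE8}) relating $L_{\cdot_D},L_{\succ_D}$ to $T_{r+\tau(r)}$ and $R_{\prec_D}^{*},R_{\cdot_D}^{*}$; since $T_{r+\tau(r)}$ is the canonical identification swapping the two summands of $D$, these identities reduce to the compatibility conditions in Definition \ref{DB1} defining an anti-dendriform bialgebra — i.e.\ they hold precisely because $(A,\succ_A,\prec_A,\Delta_{\succ},\Delta_{\prec})$ is a bialgebra. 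Thus the whole theorem can be reduced to the bialgebra axioms plus the formal properties of the double, avoiding any genuinely new computation; assembling this reduction cleanly is the only real task.
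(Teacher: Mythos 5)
Your overall skeleton (verify that $r+\tau(r)$ is invariant, that $r$ solves the AD-YBE in $(D,\succ_D,\prec_D)$, and that $T_{r+\tau(r)}$ is bijective) is the right one, and your treatment of the bijectivity clause agrees with the paper: $T_{r}(\zeta,x)=\zeta$, $T_{\tau(r)}(\zeta,x)=\pm x$, so $T_{r+\tau(r)}$ is essentially the canonical identification $D^{*}\cong D$. The gap is that the two substantive verifications are never actually performed, and the ``cleaner route'' you prefer in their place does not work as stated. For the AD-YBE you invoke Theorem \ref{QB0} and claim that $T_{r}$ and $-T_{\tau(r)}$ are anti-dendriform homomorphisms from $(D^{*},\succ_r,\prec_r)$ ``because $A$ and $A^{*}$ are anti-dendriform subalgebras of $D$.'' That observation only covers the inclusions $A^{*}\hookrightarrow D$ and $A\hookrightarrow D$; since $T_{r}$ and $-T_{\tau(r)}$ factor through the projections $D^{*}\to A^{*}$ and $D^{*}\to A$, your argument also needs those projections to be multiplicative for the structure you claim is the transported double structure. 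But the double multiplications (\ref{Db1})--(\ref{Db2}) have cross terms (e.g. $-(R_{\prec_A}^{*}+R_{\succ_A}^{*})(x)b+L_{\prec_A}^{*}(y)a$) landing in each summand, so the projections of $(D,\succ_D,\prec_D)$ onto $A$ and $A^{*}$ are \emph{not} algebra maps; hence either your identification of $(D^{*},\succ_r,\prec_r)$ with the transported double is wrong, or the homomorphism property does not follow from the subalgebra observation. Determining $\succ_r,\prec_r$ on $D^{*}$ from Eqs.~(\ref{IE1})--(\ref{IE2}) is a computation of the same size as checking the AD-YBE directly, so nothing is saved.

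The invariance reduction has a similar problem: the identities (\ref{IE8}) for $D$ do not ``reduce to the compatibility conditions in Definition \ref{DB1}.'' What actually makes them (and the AD-YBE for this $r$) hold is purely formal: with $r=\sum_{i}e_{i}\otimes e_{i}^{*}$ one has $\sum_{i}f(e_{i})\otimes e_{i}^{*}=\sum_{i}e_{i}\otimes f^{*}(e_{i}^{*})$ for any linear map $f$, and the paper's proof consists of expanding $(R_{\cdot_D}(x)\otimes I+I\otimes L_{\succ_D}(x))(r+\tau(r))$ and $r_{12}\cdot_D r_{13}+r_{23}\succ_D r_{12}-r_{13}\prec_D r_{23}$ via (\ref{Db1})--(\ref{Db2}) and cancelling terms pairwise by exactly this dual-basis identity; the bialgebra axioms enter only to guarantee that $(D,\succ_D,\prec_D)$ is an anti-dendriform algebra at all. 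This ``bookkeeping'' is the entire content of the theorem and occupies only a few lines; it cannot be deferred to a reduction that, as written, rests on a false multiplicativity claim.
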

\begin{proof}
Firstly, we prove that $r+\tau(r)=\sum_{i,j=1}^{n}(e_{i}\otimes e_{i}^{*}+e_{j}^{*}\otimes e_{j})$ is invariant. 
By Eqs.~(\ref{Db1}) and (\ref{Db2}), we have for all $x\in A$ 
\begin{align*}&(R_{\cdot_D}(x)\otimes I+I\otimes L_{\succ_D}(x))\sum_{i,j=1}^{n}(e_{i}\otimes e_{i}^{*}+e_{j}^{*}\otimes e_{j})
\\=&\sum_{i,j=1}^{n}e_{i}\cdot_{D} x\otimes e_{i}^{*}+e_{j}^{*}\cdot_{D}x\otimes e_{j}
+e_{i}\otimes x\succ_{D}e_{i}^{*}+e_{j}^{*}\otimes x\succ_De_{j}
\\=&\sum_{i,j=1}^{n}e_{i}\cdot_A x\otimes e_{i}^{*}-(R_{\prec_{A^*}}^{*}(e_{j}^{*})x+L_{\succ_A}^{*}(x)e_{j}^{*})
\otimes e_{j}
-e_{i}\otimes(R_{\cdot_A}^{*}(x)e_{i}^{*}-L_{\prec_{A^*}}^{*}(e_{i}^{*})x)
+e_{j}^{*}\otimes x\succ_A e_{j}.
\end{align*}
Note that
\begin{align*}& \sum_{i}^{n}e_{i}\cdot_A x\otimes e_{i}^{*}-e_{i}\otimes(R_{\cdot_A}^{*}(x)e_{i}^{*})=0,
\\&\sum_{i,j=1}^{n} e_{i}\otimes L_{\prec_{A^*}}^{*}(e_{i}^{*})x-(R_{\prec_{A^*}}^{*}(e_{j}^{*})x)
\otimes e_{j}=0,
\\&\sum_{j=1}^{n}e_{j}^{*}\otimes x\succ_A e_{j}-L_{\succ_A}^{*}(x)e_{j}^{*}\otimes e_{j}=0.
\end{align*}
Thus, $(R_{\cdot_D}(x)\otimes I+I\otimes L_{\succ_D}(x))(r+\tau(r))=0.$
Similarly, $(I\otimes L_{\cdot}(x)+R_{\prec}(x)\otimes I)(r+\tau(r))=0.$
By duality, we get
\begin{equation*}(R_{\cdot_D}(\zeta)\otimes I+I\otimes L_{\succ_D}(\zeta))(r+\tau(r))=0,
\ \ \
(I\otimes L_{\cdot}(\zeta)+R_{\prec}(\zeta)\otimes I)(r+\tau(r))=0\end{equation*}
for all $\zeta\in A^{*}$. Thus, according to Definition \ref{In}, $r+\tau(r)$ is invariant.
Secondly, we prove that $r$ is a solution of the AD-YBE in $(D,\succ_D,\prec_D)$.
 From Eqs.~(\ref{Db1}) and (\ref{Db2}), we get
\begin{align*}& r_{12}\cdot_D r_{13}+r_{23}\succ_D r_{12}-r_{13}\prec_D r_{23}
\\=&\sum_{i,j=1}^{n}e_{i}\cdot_D e_{j}\otimes e_{i}^{*}\otimes e_{j}^{*}+e_{j}\otimes e_{i}\succ_D e_{j}^{*} \otimes e_{i}^{*}
-e_{i}\otimes e_j \otimes e_{i}^{*}\prec_D e_{j}^{*}
\\=&\sum_{i,j=1}^{n}e_{i}\cdot_A e_{j}\otimes e_{i}^{*}\otimes e_{j}^{*}+e_{j}\otimes (L_{\prec_{A^*}}^{*}(e_{j}^{*})e_{i}-R_{\cdot_A}^{*}(e_{i})e_{j}^{*})
 \otimes e_{i}^{*}-e_{i}\otimes e_j \otimes e_{i}^{*}\prec_{A^*} e_{j}^{*}
=0.\end{align*}
In all, $(D,\succ_D,\prec_D, \Delta_{\succ,r},\Delta_{\prec,r})$ is a quasi-triangular anti-dendriform
bialgebra.
Furthermore, the linear maps $T_{r},T_{\tau(r)}:D^{*}\longrightarrow D$ are respectively defined by 
$T_{r}(\zeta,x)=\zeta,~T_{\tau(r)}(\zeta,x)=-x$ for all $x\in A,\zeta\in A^{*}$, 
which yields that
$T_{r+\tau(r)}(\zeta,x)=(\zeta,-x)$ is a linear isomorphism. Therefore, $(D,\succ_D,\prec_D, \Delta_{\succ,r},\Delta_{\prec,r})$
is a factorizable anti-dendriform
bialgebra.

\end{proof}
\section{Relative Rota-Baxter operators and the AD-YBE}
In this section, we introduce the notion of relative Rota-Baxter operators with 
weights on anti-dendriform algebras, 
which can demonstrates the solutions of the AD-YBE whose symmetric parts are invariant.

\begin{defi} Let $(A,\succ,\prec)$ and $(V,\succ_V,\prec_V)$ be anti-dendriform algebras. Assume that
 $(V,l_{\succ},r_{\succ},$\ \ \ \ $l_{\prec},r_{\prec})$ is a representation of $(A,\succ,\prec)$
 and the following conditions are satisfied:
 \begin{align*}&
 (l_{\succ}(x)a)\prec_V b=l_{\succ}(x)(a\prec_V b),\ \ \ (r_{\succ}(x)a)\prec_V b=a\succ_V(l_{\prec}(x)b)
 ,\ \ \ r_{\prec}(x)(a\succ_V b)=a\succ_V(r_{\prec}(x)b),\\&
 l_{\succ}(x)(a\prec_V b)=-(l_{\cdot}(x)a)\succ_V b=-l_{\prec}(x)(a\cdot_V b)=(l_{\prec}(x)a)\prec_V b, 
 \\&
 a\succ_V(l_{\succ}(x) b)=-(r_{\cdot}(x)a)\succ_V b=-a\prec_V (l_{\cdot}(x)b)
 =(r_{\prec}(x)a)\prec_V b, 
 \\&
 a\succ_V(r_{\succ}(x) b)=-r_{\succ}(x)(a\cdot_V b)=-a\prec_V (r_{\cdot}(x)b)
 =r_{\prec}(x)(a\prec_V b), 
\end{align*}
for all $x\in A$ and $a,b\in V$. Then $(V,\succ_V,\prec_V,l_{\succ},r_{\succ},l_{\prec},r_{\prec})$ is called
  an $A$-anti-dendriform algebra, where $\cdot=\succ+\prec,~\cdot_V=\succ_V+\prec_V$.
\end{defi}
 It is easy to check that $(V,\succ_V,\prec_V,l_{\succ},r_{\succ},l_{\prec},r_{\prec})$ is an $A$-anti-dendriform algebra
 if and only if $(A\oplus V,\succeq,\preceq)$ is an anti-dendriform algebra, where
 \begin{align*}&
 (x+a)\succeq (y+b)=x\succ y+l_{\succ}(x)b+r_{\succ}(y)a+a\succ_V b,\\&
 (x+a)\preceq (y+b)=x\prec y+l_{\prec}(x)b+r_{\prec}(y)a+a\prec_V b.
  \end{align*}

\begin{defi} Let $(A,\cdot)$ and $(V,\circ)$ be associative algebras. Assume that
 $(V,l_{A},r_{A})$ is a representation of $(A,\cdot)$
 and the following conditions are satisfied:
 \begin{align*}
 (l_{A}(x)a)\circ b=l_{A}(x)(a\circ b),\ \ \ a\circ (r_{A}(x)b)=r_{A}(x)(a\circ b)
 ,\ \ \ (r_{A}(x)a)\circ b=a\circ(l_{A}(x)b),
\end{align*}
for all $x\in A$ and $a,b\in V$. Then $(V,\circ,l_{A},r_{A})$ is called
  an $A$-associative algebra.
\end{defi}

 \begin{pro}
Let $(A,\succ,\prec)$ be an anti-dendriform algebra and $r\in A\otimes A$ be skew-symmetric and invariant. 
Define multiplications $\succeq_r,\preceq_r:A^{*}\otimes A^{*}\longrightarrow A^{*}$ by
\begin{align}&\label{AD1} \zeta\succeq_r\eta=R_{\cdot}^{*}(T_{r}(\zeta))\eta=L_{\prec}^{*}(T_{r}(\eta))\zeta, 
 \\&\label{AD2}\zeta\preceq_r\eta=-
R_{\succ}^{*}(T_{r}(\zeta))\eta=-L_{\cdot}^{*}(T_{r}(\eta))\zeta,~~\forall~\zeta,\eta\in A^{*}.
\end{align}
Then $(A^{*},\succeq_r,\preceq_r,-R_{\cdot}^{*},L_{\prec}^{*},R_{\succ}^{*},-L_{\cdot}^{*})$ is an $A$-anti-dendriform algebra
and $(A^{*},\cdot_r,-R_{\prec}^{*},-L_{\succ}^{*})$ is an $A$-associative algebra, where
\begin{equation}\label{AD3} \zeta\cdot_r\eta=R_{\prec}^{*}(T_{r}(\zeta))\eta=-L_{\succ}^{*}(T_{r}(\eta))\zeta,~~\forall~\zeta,\eta\in A^{*}.
\end{equation}
\end{pro}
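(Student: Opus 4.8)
The plan is to translate the invariance of $r$ into a complete set of intertwining relations between $T_r$ and the structure maps of $(A,\succ,\prec)$, and then to reduce every axiom to be checked to an identity already available in $A$ or in its regular representation. Since $r$ is skew-symmetric, $\tau(r)=-r$ and $T_{\tau(r)}=-T_r$; the invariance of $r$ is, by Lemma~\ref{In1}, the relations \eqref{IE7}, which read $T_r\circ R_\prec^{*}(x)=-L_\cdot(x)\circ T_r$ and $T_r\circ R_\cdot^{*}(x)=-L_\succ(x)\circ T_r$ for all $x\in A$, and applying $\tau$ (that is, using $\tau(r)=-r$) gives the mirror relations $T_r\circ L_\cdot^{*}(x)=-R_\prec(x)\circ T_r$ and $T_r\circ L_\succ^{*}(x)=-R_\cdot(x)\circ T_r$; the analogous relations for $\succ$ and $\prec$ follow by subtraction. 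The coincidence of the two expressions defining $\succeq_r$ and $\preceq_r$ (hence $\cdot_r=\succeq_r+\preceq_r$ in \eqref{AD3}) is immediate from these same invariance relations. As a first corollary $T_r$ is a homomorphism for each of the three products, e.g.\ $T_r(\zeta\cdot_r\eta)=T_r(-R_\prec^{*}(T_r\zeta)\eta)=T_r(\zeta)\cdot T_r(\eta)$, and similarly $T_r(\zeta\succeq_r\eta)=T_r(\zeta)\succ T_r(\eta)$, $T_r(\zeta\preceq_r\eta)=T_r(\zeta)\prec T_r(\eta)$.

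Next, the quadruple $(-R_\cdot^{*},L_\prec^{*},R_\succ^{*},-L_\cdot^{*})$ is exactly the dual of the regular representation $(L_\succ,R_\succ,L_\prec,R_\prec)$ of $(A,\succ,\prec)$, hence a representation of $(A,\succ,\prec)$ by Proposition~\ref{zr}(c); likewise $(-R_\prec^{*},-L_\succ^{*})$ is a representation of $(A,\cdot)$ by Proposition~\ref{zr}(d). So it remains to show (i) that $(A^{*},\succeq_r,\preceq_r)$ is an anti-dendriform algebra and $(A^{*},\cdot_r)$ an associative algebra, and (ii) the compatibility identities in the definitions of an A-anti-dendriform and of an A-associative algebra. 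For (ii) the procedure is uniform: pair with an arbitrary $x\in A$, expand $\succeq_r,\preceq_r$ through $T_r$, push $T_r$ across via the intertwining relations, and reduce to one of \eqref{R1}--\eqref{R4} or to a dualized statement of Proposition~\ref{zr}(a),(b). For instance $(-R_\cdot^{*}(x)\zeta)\preceq_r\eta=R_\succ^{*}\bigl(T_r(-R_\cdot^{*}(x)\zeta)\bigr)\eta=R_\succ^{*}(x\succ T_r\zeta)\eta$, which equals $-R_\cdot^{*}(x)(\zeta\preceq_r\eta)$ precisely because $R_\succ(x\succ u)=-R_\succ(u)R_\cdot(x)$ by \eqref{R2}.

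For (i) one needs one more input. Because $r$ is invariant, the comultiplications of \eqref{CD1}--\eqref{CD2} vanish identically, so $(A,\succ,\prec,0,0)$ is trivially an anti-dendriform bialgebra and Theorem~\ref{BY} forces $D(r)=0$, i.e.\ $r$ solves the AD-YBE. Inserting $D(r)=0$ into Proposition~\ref{Ya}(a),(b) and using the intertwining relations once more collapses each right-hand side to twice its left-hand side, which forces $T_r(A^{*})\succ T_r(A^{*})=T_r(A^{*})\prec T_r(A^{*})=T_r(A^{*})\cdot T_r(A^{*})=0$. Granting this, pushing $T_r$ through and using Proposition~\ref{zr}(a),(b) turns each of the four members of \eqref{A1} (resp.\ \eqref{A2}) for $(A^{*},\succeq_r,\preceq_r)$ into an operator of the form $R_{\bullet}^{*}\bigl(T_r(\zeta)\star T_r(\eta)\bigr)$, $\star\in\{\succ,\prec,\cdot\}$, applied to the remaining argument, hence zero, and the axioms hold; associativity of $\cdot_r$ follows in the same way (or directly: $(\zeta\cdot_r\eta)\cdot_r\theta=-R_\prec^{*}(T_r\zeta\cdot T_r\eta)\theta=R_\prec^{*}(T_r\zeta)R_\prec^{*}(T_r\eta)\theta=\zeta\cdot_r(\eta\cdot_r\theta)$, using that $R_\prec^{*}(ab)=-R_\prec^{*}(a)R_\prec^{*}(b)$ dualizes Proposition~\ref{zr}(a)).

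The conceptual weight is entirely in the first paragraph — converting invariance of $r$ into the intertwining dictionary — together with the short deduction $D(r)=0$ in the third; everything else is mechanical. Accordingly the main obstacle is organizational rather than conceptual: many identities must be discharged (two full anti-dendriform axiom systems plus the A-anti-dendriform and A-associative compatibility conditions, several of which are four-way equalities), and the sign conventions in \eqref{AD1}--\eqref{AD3} and in the intertwining relations have to be tracked carefully. One can shorten the write-up by proving instead the single equivalent statement, recorded just after the definition of an A-anti-dendriform algebra, that $A\oplus A^{*}$ — carrying $\succ,\prec$ on $A$, the dual representation on $A^{*}$, and $\succeq_r,\preceq_r$ on $A^{*}$ — is an anti-dendriform algebra; this bundles all of the above into verifying \eqref{A1}--\eqref{A2} for one algebra, split according to how many of the three arguments lie in $A^{*}$.
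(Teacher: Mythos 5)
Your first two paragraphs are sound and follow the same route as the paper's (unwritten) direct computation: convert invariance into intertwining relations for $T_r$, observe that $(-R_\cdot^{*},L_\prec^{*},R_\succ^{*},-L_\cdot^{*})$ is the dual of the regular representation via Proposition~\ref{zr}, and discharge each compatibility identity by pushing $T_r$ across and invoking \eqref{R1}--\eqref{R4}. The genuine gap is in your third paragraph, the verification that $(A^{*},\succeq_r,\preceq_r)$ is an anti-dendriform algebra. You argue: invariance makes $\Delta_{\succ,r}=\Delta_{\prec,r}=0$, the zero comultiplications trivially give a bialgebra, so Theorem~\ref{BY} forces $D(r)=0$, and then Proposition~\ref{Ya} forces $T_r(A^{*})\star T_r(A^{*})=0$ for $\star\in\{\succ,\prec,\cdot\}$, after which every axiom is $0=0$. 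This step is not justified. The ``only if'' direction of Theorem~\ref{BY} applied to the identically zero comultiplication is exactly the degenerate case where such equivalences break down: the bialgebra conditions \eqref{CD5}--\eqref{CD8} have the form ``(operator depending on $x$) applied to $D_i(r)$ vanishes for all $x$,'' which does not force $D_i(r)=0$. Your own chain of reasoning, taken literally, would prove that \emph{every} skew-symmetric invariant $r$ satisfies $T_r(\eta)\cdot T_r(\zeta)=0$ for all $\eta,\zeta$ --- a strong structural degeneracy asserted nowhere in the paper and certainly not intended as the mechanism behind this proposition. You should not rest the anti-dendriform axioms on it.

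The repair is short and is already contained in your own parenthetical argument for associativity of $\cdot_r$. Invariance gives the homomorphism identities $T_r(\zeta\succeq_r\eta)=T_r(\zeta)\succ T_r(\eta)$, $T_r(\zeta\preceq_r\eta)=T_r(\zeta)\prec T_r(\eta)$, $T_r(\zeta\cdot_r\eta)=T_r(\zeta)\cdot T_r(\eta)$, so every iterated product such as $(\zeta\preceq_r\eta)\preceq_r\theta$ equals $R_\succ^{*}\bigl(T_r(\zeta)\prec T_r(\eta)\bigr)\theta$ with the \emph{inner} argument still of the form $T_r(\alpha)$; one then applies the dualized regular-representation identities (e.g.\ $R_\succ^{*}(x\prec y)=R_\cdot^{*}(x)R_\cdot^{*}(y)$ and $R_\succ^{*}(x\succ y)=-R_\cdot^{*}(x)R_\succ^{*}(y)$, from \eqref{R1} and \eqref{R4} for the dual representation) to see that all four members of \eqref{A1}, and both sides of \eqref{A2}, coincide --- with no vanishing of products required. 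This is precisely the ``direct computation'' the paper intends. Two smaller points you should also address: the two expressions in \eqref{AD1}--\eqref{AD2} do \emph{not} both follow from \eqref{IE5}--\eqref{IE6} with the signs as printed (under skew-symmetry \eqref{IE5} gives $R_\cdot^{*}(T_r(\zeta))\eta=L_\prec^{*}(T_r(\eta))\zeta$, which contradicts the sign in \eqref{AD1} unless the product vanishes; the consistency of $\cdot_r=\succeq_r+\preceq_r$ with \eqref{AD3} pins down which signs are typos), so your claim that the coincidence ``is immediate'' needs an explicit sign audit; and your suggested shortcut of verifying that $A\oplus A^{*}$ is an anti-dendriform algebra is a fine organizational device but still requires exactly the computations above.
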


\begin{proof} Using Eqs.~(\ref{R1})-(\ref{R4}), (\ref{IE8})-(\ref{IE13}) 
and (\ref{AD1})-(\ref{AD2}), 
we obtain the conclusion via direct computations.
\end{proof}

\begin{defi} Let $(A,\succ,\prec)$ be an anti-dendriform algebra and 
$(V,\succ_V,\prec_V,l_{\succ},r_{\succ},l_{\prec},r_{\prec})$ be an A-anti-dendriform algebra.
A relative Rota-Baxter operator $T$ of weight $\lambda$ on $(A,\succ,\prec)$ associated to
 $(V,\succ_V,\prec_V,l_{\succ},r_{\succ},l_{\prec},r_{\prec})$
   is a linear map $T:V\longrightarrow A$ satisfying
\begin{align*}&T(u)\succ T(v)=T (l_{\succ}(T(u))v+r_{\succ}(T(v))u+\lambda u\succ_V v),\\& 
T(u)\prec T(v)=T (l_{\prec}(T(u))v+r_{\prec}(T(v))u+\lambda u\prec_V v),~~\forall~u,v\in V.\end{align*}
When $u\succ_Vv=u\prec_Vv=0$ for all $u,v\in V$, then $T$ is simply $\mathcal O$-operator ( a relative Rota-Baxter operator) on
$(A,\succ,\prec)$ associated to a representation $(V,l_{\succ},r_{\succ},l_{\prec},r_{\prec})$.
In particular, a relative Rota-Baxter operator $P$ on $(A,\succ,\prec )$ associated to
 $(A,\succ,\prec,L_{\succ},R_{\succ},L_{\prec},R_{\prec})$ is called a Rota-Baxter operator of weight $\lambda$, 
 that is, $P:A\longrightarrow A$
is a linear map satisfying 
\begin{align*}&
P(x)\succ P(y)=P(P(x)\succ y)+x\succ P(y)+\lambda x\succ y),\\
 & P(x)\prec P(y)=P(P(x)\prec y)+x\prec P(y)+\lambda x\prec y) \end{align*}
for all $x, y, z \in A$.
$(A,\succ,\prec,P)$ is called a Rota-Baxter anti-dendriform algebra of weight $\lambda$.\end{defi}

Let $(A,\succ,\prec,P)$ be a Rota-Baxter anti-dendriform algebra of weight $\lambda$. 
It is easy to check that
  $(A,\succ,\prec,\tilde{P}=-\lambda I -P)$ is a Rota-Baxter anti-dendriform algebra of weight $\lambda$.

\begin{thm}
Let $(A,\succ,\prec)$ be an anti-dendriform algebra and $r\in A\otimes A$. Assume that $r+\tau(r)$ is invariant.
Then the following
conditions are equivalent.
 \begin{enumerate}
\item $r$ is a solution of the AD-YBE in $(A,\succ,\prec)$
 such that $(A,\succ,\prec,\Delta_{\succ,r},\Delta_{\prec,r})$
with $\Delta_{\succ,r},\Delta_{\prec,r}$ given by Eqs. (\ref{CD1})-(\ref{CD2}) is a quasi-triangular anti-dendriform bialgebra.
\item $T_r$ is a relative Rota-Baxter operator of weight $-1$ on $(A,\succ,\prec)$
 associated with the $A$-anti-dendriform algebra 
 $(A^{*},\succeq_{r+\tau(r)},\preceq_{r+\tau(r)},-R_{\cdot}^{*},L_{\prec}^{*},R_{\succ}^{*},-L_{\cdot}^{*})$,
 that is,
 \begin{align}\label{AD5} &
 T_{r}(\zeta)\succ T_{r}(\eta)= T_{r}(-R_{\cdot}^{*}(T_{r}(\zeta))\eta
 +L_{\prec}^{*}(T_{r}(\eta))\zeta-\zeta\succeq_{r+\tau(r)}\eta),\\&
 \label{AD6}T_{r}(\zeta)\prec T_{r}(\eta)=T_{r}(R_{\succ}^{*}(T_{r}(\zeta))\eta-L_{\cdot}^{*}(T_{r}(\eta))\zeta-\zeta\preceq_{r+\tau(r)}\eta).
 \end{align}
\item $T_r$ is a relative Rota-Baxter operator of weight $-1$ on $(A,\cdot)$
 associated with the $A$-associative algebra $(A^{*},\cdot_{r+\tau(r)},-R_{\prec}^{*},-L_{\succ}^{*})$, that is,
 \begin{align}\label{AD8} &
 T_{r}(\zeta)\cdot T_{r}(\eta)= T_{r}(-R_{\prec}^{*}(T_{r}(\zeta))\eta
 -L_{\succ}^{*}(T_{r}(\eta))\zeta-\zeta\cdot_{r+\tau(r)}\eta),
 \end{align}
  \end{enumerate}
 for all $\zeta,\eta\in A^{*}$, where $\succeq_{r+\tau(r)},\preceq_{r+\tau(r)}$ and $\cdot_{r+\tau(r)}$ are given respectively by 
 Eqs.~ (\ref{AD1})-(\ref{AD3}).
 
  \end{thm}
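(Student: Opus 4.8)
The plan is to transport all three conditions to the dual space $A^{*}$ through the operator $T_{r}$ of Eq.~(\ref{To}) and then read them off from Proposition~\ref{Ya}, Theorem~\ref{Ya1} and the invariance relations recorded after Proposition~\ref{Si}. Throughout, write $s:=r+\tau(r)$; by hypothesis $s$ is invariant, so $T_{s}=T_{s}^{*}=T_{r}+T_{\tau(r)}$ and Eqs.~(\ref{IE8})--(\ref{IE13}) hold. As a preliminary I would verify that the data in the statement really are what they are claimed to be: the quintuple $(A^{*},\succeq_{s},\preceq_{s},-R_{\cdot}^{*},L_{\prec}^{*},R_{\succ}^{*},-L_{\cdot}^{*})$ is an $A$-anti-dendriform algebra and $(A^{*},\cdot_{s},-R_{\prec}^{*},-L_{\succ}^{*})$ an $A$-associative algebra. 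The module maps are the dual representation of Proposition~\ref{zr}(3), and the compatibility axioms follow from Eqs.~(\ref{R1})--(\ref{R4}) together with (\ref{IE8})--(\ref{IE13}) exactly as in the Proposition immediately preceding this theorem; that argument uses only the invariance of the defining tensor, which is available here because $s$ is symmetric and invariant, the two expressions defining $\succeq_{s},\preceq_{s},\cdot_{s}$ in (\ref{AD1})--(\ref{AD3}) agreeing by (\ref{IE10}).

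For $(a)\Leftrightarrow(b)$ I would proceed as follows. Since $s$ is invariant, Theorem~\ref{Ya1}(a) gives $D(r)=0\iff D_{1}(r)=D_{2}(r)=0$, and $D=D_{1}-D_{2}$. Now expand the right-hand sides of (\ref{AD5}) and (\ref{AD6}): in the terms $\zeta\succeq_{s}\eta$ and $\zeta\preceq_{s}\eta$ substitute $T_{s}=T_{r}+T_{\tau(r)}$ using (\ref{AD1})--(\ref{AD2}), choosing at each occurrence the appropriate one of the two equal expressions for the product so that the diagonal summands cancel against the $L_{\prec}^{*}(T_{r}(\eta))\zeta$ and $R_{\succ}^{*}(T_{r}(\zeta))\eta$ already present; this leaves identities whose remaining off-diagonal pieces involve only $T_{\tau(r)}$. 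Pairing with an arbitrary $\theta\in A^{*}$, using $T_{\tau(r)}=T_{r}^{*}$, and rewriting the $T_{\tau(r)}$-terms by means of (\ref{IE8})--(\ref{IE13}) (and, where the index pattern forces it, by Theorem~\ref{Ya1}(b), $D(r)=0\iff D(\tau(r))=0$), these identities turn into dual-side statements each of which is, by Proposition~\ref{Ya}, equivalent to one of $D(r)=0$, $D_{1}(r)=0$, $D_{2}(r)=0$. Since any two of the three force the third, and $D(r)=0$ forces all three by Theorem~\ref{Ya1}(a), the conjunction of (\ref{AD5}) and (\ref{AD6}) is equivalent to $D(r)=0$, i.e.\ (b)$\Leftrightarrow$(a).

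The equivalence $(a)\Leftrightarrow(c)$ is obtained in the same way in the associative picture: expanding $\zeta\cdot_{s}\eta$ by $T_{s}=T_{r}+T_{\tau(r)}$ via (\ref{AD3}), pairing with $\theta$, and applying (\ref{IE8})--(\ref{IE13}) converts (\ref{AD8}) into the characterization of the AD-YBE furnished by Proposition~\ref{Ya}(a) (equivalently \ref{Ya}(e)), hence (c)$\Leftrightarrow D(r)=0\Leftrightarrow$(a). The direction (b)$\Rightarrow$(c) is in any case immediate, since adding (\ref{AD5}) and (\ref{AD6}) and using $R_{\cdot}^{*}=R_{\succ}^{*}+R_{\prec}^{*}$, $L_{\cdot}^{*}=L_{\succ}^{*}+L_{\prec}^{*}$ and $\succeq_{s}+\preceq_{s}=\cdot_{s}$ gives (\ref{AD8}). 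This yields the full chain of equivalences.

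I expect the main obstacle to be the bookkeeping of the mixed $T_{r}/T_{\tau(r)}$ terms that appear when $T_{s}$ is split. One must, term by term, pick the right expression for $\succeq_{s},\preceq_{s},\cdot_{s}$ so that the diagonal contributions cancel, and then invoke precisely the correct identity among (\ref{IE8})--(\ref{IE13}) (together with Theorem~\ref{Ya1}(b)) to bring each surviving $T_{\tau(r)}$-term into the shape of the right-hand side of the pertinent item of Proposition~\ref{Ya}. It is exactly this matching that produces the weight-$1$ terms $\zeta\succeq_{s}\eta$, $\zeta\preceq_{s}\eta$, $\zeta\cdot_{s}\eta$; once it has been carried out, the equivalences drop out of Theorem~\ref{Ya1}.
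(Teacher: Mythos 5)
Your proposal follows essentially the same route as the paper: both transport the conditions to $A^{*}$ via $T_{r}$, invoke Proposition \ref{Ya} and Theorem \ref{Ya1} to characterize the AD-YBE on the dual side, and split $T_{r+\tau(r)}=T_{r}+T_{\tau(r)}$ so that the leftover piece becomes the weight-$1$ term $\zeta\succeq_{r+\tau(r)}\eta$, $\zeta\preceq_{r+\tau(r)}\eta$ or $\zeta\cdot_{r+\tau(r)}\eta$. The paper writes out only the associative case (a)$\Leftrightarrow$(c) and declares (a)$\Leftrightarrow$(b) analogous, so your more explicit description of the remaining bookkeeping is consistent with, and slightly more detailed than, the published argument.
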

 \begin{proof} 
 In the light of Proposition \ref{Ya} and Theorem \ref{Ya1}, if $r+\tau(r)$ is invariant, then
 $r$ is a solution of the AD-YBE in $(A,\succ,\prec)$ if and only if
 \begin{equation*}T_{r}(\eta)\cdot T_{r}(\zeta)=T_{r}(L_{\succ}^{*}(T_{\tau(r)}(\zeta))\eta
-R_{\prec}^{*}(T_{r}(\eta))\zeta),~~\forall~\eta,\zeta\in A^{*}.
\end{equation*}
 Note that
  \begin{align*}T_{r}(\eta)\cdot T_{r}(\zeta)&=T_{r}(-L_{\succ}^{*}(T_{r}(\zeta))\eta
-R_{\prec}^{*}(T_{r}(\eta))\zeta+L_{\succ}^{*}(T_{r+\tau(r)}(\zeta))\eta)\\&=
T_{r}(-L_{\succ}^{*}(T_{r}(\zeta))\eta
-R_{\prec}^{*}(T_{r}(\eta))\zeta-\eta\cdot_{r+\tau(r)}\zeta).
\end{align*}
Thus, Item (a) $\Longleftrightarrow$ Item (c).
By the same token, Item (a) $\Longleftrightarrow$ Item (b). 
  \end{proof}
 
 If $r\in A\otimes A$ is skew-symmetric, then $r+\tau(r)=0$. The following conclusion is obtained.
 
 \begin{cor} \cite{030}
Let $(A,\succ,\prec)$ be an anti-dendriform algebra and $r\in A\otimes A$ be skew-symmetric. 
Then the following conditions are equivalent:
\begin{enumerate}
\item $r$ is a solution of the AD-YBE in $(A,\succ,\prec)$.
\item $T_r:A^{*}\longrightarrow A$ is a relative Rota-Baxter operator on $(A,\cdot)$
 associated to the representation $(A^{*},-R_{\prec}^{*},-L_{\succ}^{*})$.
 \item $T_r:A^{*}\longrightarrow A$ is a relative Rota-Baxter operator on $(A,\succ,\prec)$
  associated to the representation $(A^{*},-R_{\cdot}^{*},L_{\prec}^{*},R_{\succ}^{*},-L_{\cdot}^{*})$.
  \end{enumerate}
  \end{cor}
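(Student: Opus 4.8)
The plan is to reduce the corollary to the main theorem of this section by observing that skew-symmetry of $r$ forces $r+\tau(r)=0$, so all the hypotheses and auxiliary structures simplify drastically. First I would note that when $r$ is skew-symmetric, the invariance condition on $r+\tau(r)$ holds vacuously, since $r+\tau(r)=0$ is trivially invariant. This means the preceding theorem applies with no extra assumption, so its three equivalent conditions specialize to the skew-symmetric case.

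Next I would simplify the A-anti-dendriform and A-associative structures appearing in that theorem. Since $r+\tau(r)=0$, we have $T_{r+\tau(r)}=0$, hence the products $\succeq_{r+\tau(r)},\preceq_{r+\tau(r)},\cdot_{r+\tau(r)}$ defined by Eqs.~(\ref{AD1})--(\ref{AD3}) all vanish identically. Consequently the $A$-anti-dendriform algebra $(A^{*},\succeq_{r+\tau(r)},\preceq_{r+\tau(r)},-R_{\cdot}^{*},L_{\prec}^{*},R_{\succ}^{*},-L_{\cdot}^{*})$ collapses to the bare representation $(A^{*},-R_{\cdot}^{*},L_{\prec}^{*},R_{\succ}^{*},-L_{\cdot}^{*})$ with zero internal multiplications, and similarly the $A$-associative structure on $(A^{*},\cdot_{r+\tau(r)},-R_{\prec}^{*},-L_{\succ}^{*})$ becomes the representation $(A^{*},-R_{\prec}^{*},-L_{\succ}^{*})$ of $(A,\cdot)$. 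In this degenerate situation a relative Rota--Baxter operator of weight $1$ associated with the $A$-algebra is, by definition (with the internal products set to zero), exactly a relative Rota--Baxter operator (an $\mathcal{O}$-operator) associated with the underlying representation. Also $T_{\tau(r)} = -T_r$ when $r$ is skew-symmetric, so expressions like $T_{\tau(r)}(\zeta)$ reduce to $-T_r(\zeta)$; one checks the defining identities match those in Proposition~\ref{Da1} and the representation $(A^{*},-R_{\prec}^{*}+R_{\succ}^{*}+\ldots)$ of Proposition~\ref{zr}(3) with the dual-representation signs, which is precisely the representation $(A^{*},-R_{\cdot}^{*},L_{\prec}^{*},R_{\succ}^{*},-L_{\cdot}^{*})$ named in items (b)--(c) of the corollary.

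Finally I would invoke the theorem directly: item (a) of the theorem (that $r$ is a solution of the AD-YBE inducing a quasi-triangular bialgebra) becomes, in the skew-symmetric case, simply ``$r$ is a solution of the AD-YBE'' by Theorem~\ref{BY}, which says a skew-symmetric solution always yields a coboundary (triangular) bialgebra. Items (b) and (c) of the theorem, after the specializations above, become items (c) and (b) of the corollary respectively. Hence the three equivalences transfer verbatim. The one place where a small amount of care is needed—and the only real obstacle—is confirming that ``relative Rota--Baxter operator of weight $1$ associated with the zero-product $A$-anti-dendriform algebra on a representation'' coincides on the nose with ``relative Rota--Baxter operator ($\mathcal{O}$-operator) associated with that representation'' as defined in the preceding Definition; this is immediate from the sentence in that Definition stating that when the internal products $u\succ_V v = u\prec_V v = 0$ vanish, $T$ is simply a relative Rota--Baxter operator associated with the representation. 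So the corollary follows, and this also recovers the statement of \cite{030} as indicated.
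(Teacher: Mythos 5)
Your proposal is correct and follows essentially the same route as the paper: the paper's entire justification is the remark that skew-symmetry forces $r+\tau(r)=0$, after which the preceding weighted theorem specializes (the internal products $\succeq_{r+\tau(r)},\preceq_{r+\tau(r)},\cdot_{r+\tau(r)}$ vanish, the weight-$1$ relative Rota--Baxter conditions collapse to the $\mathcal{O}$-operator conditions, and Theorem~\ref{BY} absorbs the quasi-triangularity clause in item (a)). You simply spell out the details the paper leaves implicit; no gap.
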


\section{Characterization of factorizable anti-dendriform bialgebras}
In this section, we first introduce the notion of quadratic Rota-Baxter anti-dendriform algebras. 
Then we characterize the relationship between factorizable anti-dendriform bialgebras
and quadratic Rota-Baxter anti-dendriform algebras.

A bilinear form $\omega$ on an associative algebra $(A,\cdot)$ is invariant if
$\omega(x\cdot y,z)=\omega(x,y\cdot z)$ for all $x,y,z\in A.$

A symmetric bilinear form $\omega$ on an associative algebra $(A,\cdot)$ 
is called a commutative Connes cocycle \cite{15} if the following equation holds:
\begin{equation*}\omega(x\cdot y,z)+\omega(y\cdot z,x)+\omega(z\cdot x,y)=0,~~\forall~x,y,z\in A.\end{equation*}

\begin{defi} Let $(A,\succ,\prec)$ be an anti-dendriform algebra 
and $\omega$ a non-degenerate symmetric bilinear form.
If $\omega$ is invariant, that is,
\begin{equation} \label{C1}\omega (x\succ y,z)=-\omega(y,z\cdot x), \ \ \
\omega (x\prec y,z)=-\omega(x,y\cdot z), ~\forall~x, y,z \in A.\end{equation}
Then $(A,\succ,\prec,\omega)$ is called a \textbf{quadratic anti-dendriform algebra}.
\end{defi}

\begin{thm}\label{Am3} \cite{15} Let $(A,\cdot)$ be an associative algebra
with a commutative Connes cocycle $\omega$.
 Then there exists a compatible anti-dendriform algebra structure $(A,\succ,\prec)$
 on $(A,\cdot)$ defined by Eq.~(\ref{C1}),
such that $(A,\cdot)$ is the associated associative algebra of $(A,\succ,\prec)$. This anti-dendriform algebra is called 
the compatible anti-dendriform algebra of $(A,\cdot,\omega)$. Moreover, $(A,\succ,\prec,\omega)$ is a quadratic anti-dendriform algebra.
Conversely, assume that $(A,\succ,\prec,\omega)$ is a quadratic anti-dendriform algebra.
Then $(A,\cdot)$ is an associative algebra
with a commutative Connes cocycle $\omega$.
\end{thm}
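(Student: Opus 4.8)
The plan is to prove the two directions of Theorem~\ref{Am3} separately, treating the correspondence between commutative Connes cocycles on $(A,\cdot)$ and quadratic anti-dendriform structures as a purely bilinear-algebra translation. First I would set up the nondegenerate symmetric form $\omega$ and use it to \emph{define} the two operations: since $\omega$ is nondegenerate, for fixed $x,y\in A$ there are unique elements, call them $x\succ y$ and $x\prec y$, determined by the two equations in~\eqref{C1}, namely $\omega(x\succ y,z)=-\omega(y,z\cdot x)$ and $\omega(x\prec y,z)=-\omega(x,y\cdot z)$ for all $z$. Bilinearity of $\succ,\prec$ is immediate from bilinearity of $\omega$ and $\cdot$. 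The first key check is that $x\succ y+x\prec y=x\cdot y$: using the commutative Connes cocycle identity $\omega(x\cdot y,z)+\omega(y\cdot z,x)+\omega(z\cdot x,y)=0$ together with symmetry of $\omega$, one rewrites $\omega(x\succ y,z)+\omega(x\prec y,z)=-\omega(y,z\cdot x)-\omega(x,y\cdot z)=-\omega(z\cdot x,y)-\omega(y\cdot z,x)=\omega(x\cdot y,z)$, and nondegeneracy gives $x\succ y+x\prec y=x\cdot y$.

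Next I would verify the anti-dendriform axioms~\eqref{A1}--\eqref{A2}. Each of the four expressions in~\eqref{A1} is an element of $A$; to show they coincide it suffices to pair each against an arbitrary $z\in A$ and reduce everything, via the defining relations~\eqref{C1}, to an expression purely in terms of $\omega$ and the associative product $\cdot$, then invoke associativity of $\cdot$ and the cocycle identity. For instance $\omega\big(x\succ(y\succ z),w\big)=-\omega(y\succ z,w\cdot x)=\omega(z,(w\cdot x)\cdot y)$, while $\omega\big(-(x\cdot y)\succ z,w\big)=\omega(z,w\cdot(x\cdot y))$, and these agree by associativity; the remaining two terms of~\eqref{A1} and the single identity~\eqref{A2} are handled by the same mechanical pairing-and-reduce procedure. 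Since the excerpt explicitly notes that $(A,\cdot)$ is always associative for any anti-dendriform algebra, no separate verification of associativity is needed once the axioms hold; and the construction makes $(A,\cdot)$ literally equal to the associated associative algebra by the previous paragraph. That $(A,\succ,\prec,\omega)$ is then quadratic is automatic, since~\eqref{C1} holds by construction and $\omega$ was assumed nondegenerate and symmetric.

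For the converse, I would start from a quadratic anti-dendriform algebra $(A,\succ,\prec,\omega)$, so $(A,\cdot)$ with $\cdot=\succ+\prec$ is its associated associative algebra, and $\omega$ satisfies~\eqref{C1}. Adding the two identities in~\eqref{C1} gives $\omega(x\cdot y,z)=-\omega(y,z\cdot x)-\omega(x,y\cdot z)$; symmetrizing/cycling this relation and using symmetry of $\omega$ yields the commutative Connes cocycle identity $\omega(x\cdot y,z)+\omega(y\cdot z,x)+\omega(z\cdot x,y)=0$ after a short rearrangement. I would also remark that $\omega$ need not be invariant as an associative bilinear form in the usual sense; it is the Connes-cocycle condition, not invariance, that is the right notion here, which is why~\eqref{C1} is stated with the ``$-$'' sign and the cyclic flavour.

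The main obstacle is bookkeeping rather than conceptual: one must be careful with the placement of parentheses and the exact sign in each of the four clauses of~\eqref{A1} when translating through~\eqref{C1}, because the two defining relations for $\succ$ and $\prec$ are \emph{not} symmetric in the roles of their arguments (one ``absorbs'' on the right into $z\cdot x$, the other on the left into $y\cdot z$). I expect the cleanest exposition is to prove one representative equality in full (say $x\succ(y\succ z)=-(x\cdot y)\succ z$) to exhibit the pattern, then state that the other identities in~\eqref{A1}--\eqref{A2} follow by the identical pairing argument using associativity of $\cdot$, rather than writing out all of them. One should double-check that the compatible structure produced is unique — this too is immediate from nondegeneracy of $\omega$, since the operations are characterized by their pairings against all $z$.
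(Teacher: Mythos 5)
The paper does not actually prove this statement: it is imported verbatim from the literature (the citation \cite{4}, and in substance from the Gao--Liu--Bai paper on anti-dendriform algebras), so there is no in-paper argument to compare yours against; what follows is an assessment of your reconstruction on its own terms. Your overall strategy is the right one: define $x\succ y$ and $x\prec y$ by nondegeneracy from the two equations in~\eqref{C1}, verify $x\succ y+x\prec y=x\cdot y$ from the commutative Connes cocycle identity (your computation of this is correct), check the axioms~\eqref{A1}--\eqref{A2} by pairing against an arbitrary element, and obtain the converse by adding the two equations of~\eqref{C1} and using symmetry of $\omega$. The converse direction as you describe it is complete.

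The gap is in the forward direction, in the sentence asserting that the remaining clauses of~\eqref{A1} ``follow by the identical pairing argument using associativity of $\cdot$.'' Pairing the four expressions of~\eqref{A1} against $w$ gives, in order, $\omega(z,(w\cdot x)\cdot y)$, $\omega(z,w\cdot(x\cdot y))$, $\omega(x,(y\cdot z)\cdot w)$ and $\omega(x,y\cdot(z\cdot w))$. Associativity identifies the first with the second and the third with the fourth, but it says nothing about the bridge between the two halves, namely
\begin{equation*}
\omega\bigl((w\cdot x)\cdot y,\,z\bigr)=\omega\bigl((y\cdot z)\cdot w,\,x\bigr),
\end{equation*}
and this is exactly where the cocycle condition must carry the argument. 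The representative equality you chose to work out in full ($x\succ(y\succ z)=-(x\cdot y)\succ z$) is precisely the one that needs only associativity, so the exhibited ``pattern'' does not cover the hard case. The displayed identity is a genuine consequence of the hypotheses, but not a one-line one: writing $F_0=\omega((x\cdot y)\cdot z,w)$, $F_1=\omega((y\cdot z)\cdot w,x)$, $F_2=\omega((z\cdot w)\cdot x,y)$, $F_3=\omega((w\cdot x)\cdot y,z)$, two applications of the cocycle identity (to the triples $(x,y,z\cdot w)$ and $(z,w,x\cdot y)$) together with symmetry of $\omega$ give $F_0+F_3=F_1+F_2$, whose cyclic shift is $F_0+F_1=F_2+F_3$; subtracting yields $2(F_3-F_1)=0$ (so this derivation even requires $\operatorname{char}k\neq 2$, a hypothesis the paper does not state but which the cited source assumes). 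You should prove this identity explicitly rather than subsume it under ``mechanical pairing-and-reduce''; once it is in place, the rest of your outline, including the verification of~\eqref{A2} and the fourth clause of~\eqref{A1}, does go through exactly as you describe.
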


\begin{defi} Assume that $(A,\succ,\prec,P)$ is a Rota-Baxter anti-dendriform algebra of weight $\lambda$
and $(A,\succ,\prec,\omega)$ is a quadratic anti-dendriform algebra. Then $(A,\succ,\prec,P,\omega)$
is called a \textbf{quadratic Rota-Baxter anti-dendriform algebra of weight $\lambda$} if the following condition holds:
\begin{equation} \label{Fs}\omega (P(x),y)+\omega(x, P(y))+\lambda\omega(x,y)=0, ~\forall~x, y \in A.\end{equation}
\end{defi}

\begin{defi} Let $(A,\cdot,P)$ be a Rota-Baxter associative algebra of weight $\lambda$ and
 $\omega $ be a  commutative Connes cocycle. Then $(A,\cdot,P,\omega)$ is called a \textbf{Rota-Baxter associative
algebra of weight $\lambda$ with a commutative Connes cocycle} if the following condition holds: 
 \begin{equation} \label{Fs1}\omega (P(x),y)+\omega(x, P(y))+\lambda\omega(x,y)=0, ~\forall~x, y \in A.\end{equation}
\end{defi}

The following propositions can be formally derived from the equation:
\begin{align*} &\lambda\omega(x,y)+\omega (-\lambda(x)- P(x),y)+\omega(x, -\lambda(y)- P(y))\\=&
-\lambda\omega(x,y)-\omega (P(x),y)-\omega(x,  P(y)), ~\forall~x, y \in A.\end{align*}
 \begin{pro} \label{Fb2} 
Let $(A,\succ,\prec,\omega)$
 be a quadratic anti-dendriform algebra and let $P : A\longrightarrow A$ be a linear
map. Then $(A,\succ,\prec,P,\omega)$ is a
 quadratic Rota-Baxter anti-dendriform algebra of weight $\lambda$ if and only if $(A,\succ,\prec,-P-\lambda I,\omega)$
 is a quadratic Rota-Baxter anti-dendriform algebra of weight $\lambda$.
\end{pro}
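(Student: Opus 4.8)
The plan is to reduce the equivalence to the single scalar identity that the paper has already isolated immediately before the statement. Recall that a quadratic Rota-Baxter anti-dendriform algebra of weight $\lambda$ consists of a quadratic anti-dendriform algebra $(A,\succ,\prec,\omega)$ together with a linear map $P$ which is (i) a Rota-Baxter operator of weight $\lambda$ on $(A,\succ,\prec)$ and (ii) compatible with $\omega$ in the sense of \eqref{Fs}. Replacing $P$ by $\tilde P := -\lambda I - P$ leaves the quadratic anti-dendriform data $(A,\succ,\prec,\omega)$ completely untouched, so the proposition will follow once we check that (i) and (ii) are each preserved under $P\mapsto\tilde P$.

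For (i) there is nothing to do: it is precisely the remark recorded right after the definition of Rota-Baxter anti-dendriform algebras that $(A,\succ,\prec,\tilde P)$ is again a Rota-Baxter anti-dendriform algebra of weight $\lambda$ whenever $(A,\succ,\prec,P)$ is. For (ii) I would substitute $\tilde P(x) = -\lambda x - P(x)$ into the left-hand side of \eqref{Fs} and expand by bilinearity of $\omega$:
\begin{align*}
\lambda\omega(x,y) + \omega(\tilde P(x),y) + \omega(x,\tilde P(y))
&= \lambda\omega(x,y) + \omega(-\lambda x - P(x),y) + \omega(x,-\lambda y - P(y))\\
&= -\lambda\omega(x,y) - \omega(P(x),y) - \omega(x,P(y))\\
&= -\bigl(\lambda\omega(x,y) + \omega(P(x),y) + \omega(x,P(y))\bigr).
\end{align*}
Hence the weight-$\lambda$ compatibility condition \eqref{Fs} holds for $\tilde P$ if and only if it holds for $P$. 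This gives both implications at once; one may also note that $P\mapsto\tilde P$ is an involution, since $-\lambda I - (-\lambda I - P) = P$, so the converse is in any case a formal consequence of the direct implication.

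I do not anticipate any real obstacle here; the whole argument is the displayed identity the paper points to, and every remaining ingredient, namely the non-degeneracy, symmetry and invariance of $\omega$, the anti-dendriform axioms, and the Rota-Baxter identities for $\tilde P$, is either quoted from earlier or visibly unaffected by the substitution. The only thing that requires a moment's care is the bookkeeping observation that the ``quadratic anti-dendriform algebra'' half of the structure does not involve $P$ at all, which is what lets the equivalence collapse to the one scalar identity above.
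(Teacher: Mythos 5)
Your proof is correct and follows essentially the same route as the paper, which derives the proposition "formally" from the very identity you display (negating the compatibility expression under $P\mapsto -\lambda I-P$) together with the earlier remark that $\tilde P=-\lambda I-P$ is again a Rota--Baxter operator of weight $\lambda$. Your write-up is in fact slightly more explicit than the paper's in separating the two conditions to be checked, but the substance is identical.
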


\begin{pro} 
Let $(A,\cdot,\omega)$
 be a quadratic anti-dendriform algebra and let $P: A\longrightarrow A$ be a linear
map. Then $(A,\cdot,P,\omega)$ is a Rota-Baxter associative
algebra of weight $\lambda$ with a commutative Connes cocycle $\omega$
if and only if $(A,\cdot,-P-\lambda I,\omega)$ is
a Rota-Baxter associative
algebra of weight $\lambda$ with a commutative Connes cocycle $\omega$.
\end{pro}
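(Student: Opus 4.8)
The statement is purely formal and is the associative counterpart of Proposition~\ref{Fb2}. The plan is to check separately the three ingredients of ``a Rota-Baxter associative algebra of weight~$\lambda$ with a commutative Connes cocycle'' under the simultaneous replacement $P\mapsto\tilde{P}:=-P-\lambda I$ and $\omega\mapsto-\omega$, and to exploit that this replacement is an involution — since $-\tilde{P}-\lambda I=P$ and $-(-\omega)=\omega$ — so that it suffices to prove one implication, the converse being obtained by applying it to $(A,\cdot,\tilde{P},-\omega)$.

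First I would recall that if $(A,\cdot,P)$ is a Rota-Baxter associative algebra of weight~$\lambda$, then so is $(A,\cdot,\tilde{P})$; this is the associative analogue of the remark in Section~5 on Rota-Baxter anti-dendriform algebras, and amounts to the routine verification that $\tilde{P}(x)\cdot\tilde{P}(y)=\tilde{P}\bigl(\tilde{P}(x)\cdot y+x\cdot\tilde{P}(y)+\lambda\,x\cdot y\bigr)$ follows from the weight-$\lambda$ Rota-Baxter relation for $P$. Secondly, $-\omega$ is again a (non-degenerate) symmetric bilinear form, and the commutative Connes cocycle identity $\omega(x\cdot y,z)+\omega(y\cdot z,x)+\omega(z\cdot x,y)=0$ is homogeneous and linear in $\omega$, hence is inherited by $-\omega$. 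Thus the only point that needs to be examined is the compatibility condition~(\ref{Fs1}) between the operator and the bilinear form.

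For this last point I would use the algebraic identity displayed just before Proposition~\ref{Fb2}: for all $x,y\in A$,
\begin{align*}
&\lambda(-\omega)(x,y)+(-\omega)\bigl(\tilde{P}(x),y\bigr)+(-\omega)\bigl(x,\tilde{P}(y)\bigr)\\
=&-\lambda\omega(x,y)-\omega\bigl(-\lambda x-P(x),y\bigr)-\omega\bigl(x,-\lambda y-P(y)\bigr)\\
=&\omega(P(x),y)+\omega(x,P(y))+\lambda\omega(x,y).
\end{align*}
Hence the left-hand side vanishes for all $x,y$ precisely when $(A,\cdot,P,\omega)$ satisfies~(\ref{Fs1}); that is, $(A,\cdot,\tilde{P},-\omega)$ satisfies the compatibility condition of its definition if and only if $(A,\cdot,P,\omega)$ does. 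Combining the three observations yields the desired equivalence. I do not expect any genuine obstacle here: each step is either a homogeneity/linearity remark or the standard complementary-operator identity for weight-$\lambda$ Rota-Baxter operators, exactly as the sentence introducing Proposition~\ref{Fb2} anticipates.
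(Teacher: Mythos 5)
Your proposal is correct and follows essentially the same route as the paper, which states no separate proof but derives this proposition "formally" from exactly the displayed identity you invoke; your additional checks (that $-P-\lambda I$ is again a weight-$\lambda$ Rota--Baxter operator and that the Connes cocycle condition is linear in $\omega$) are the routine verifications the paper leaves implicit.
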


\begin{thm}\label{Fb0} Suppose that $(A,\cdot,P,\omega)$ is a Rota-Baxter associative algebra of 
weight $\lambda$ with a commutative Connes cocycle. Then $(A,\succ,\prec,P,\omega)$ is a quadratic Rota-Baxter anti-dendriform
algebra of weight $\lambda$, where $\succ,\prec$ are defined by Eq.~(\ref{C1}).
 On the other hand, let $(A,\succ,\prec,P,\omega)$ be a quadratic Rota-Baxter anti-dendriform
 algebra of weight $\lambda$.
Then $(A,\cdot,P,\omega)$ is a Rota-Baxter associative algebra of weight $\lambda$ with a commutative Connes cocycle, 
where $\cdot=\succ+\prec$.
\end{thm}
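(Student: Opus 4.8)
The plan is to reduce everything to Theorem \ref{Am3}, which already establishes the correspondence between associative algebras with commutative Connes cocycles and quadratic anti-dendriform algebras via the defining equation \eqref{C1}. So the only genuinely new content here is the compatibility of the Rota-Baxter operator $P$ with the anti-dendriform splitting, together with the fact that the condition \eqref{Fs1} on $\omega$ relative to $P$ is literally the same as \eqref{Fs}. The latter is immediate since $\cdot = \succ + \prec$ and \eqref{Fs}, \eqref{Fs1} are word-for-word identical, so I will not belabor it.

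First I would assume $(A,\cdot,P,\omega)$ is a Rota-Baxter associative algebra of weight $\lambda$ with commutative Connes cocycle $\omega$. By Theorem \ref{Am3}, defining $\succ,\prec$ by \eqref{C1} makes $(A,\succ,\prec,\omega)$ a quadratic anti-dendriform algebra with associated associative algebra $(A,\cdot)$. It remains to check that $P$ is a Rota-Baxter operator of weight $\lambda$ on $(A,\succ,\prec)$, i.e. the two identities
\begin{align*}
&P(x)\succ P(y) = P\bigl(P(x)\succ y + x\succ P(y) + \lambda\, x\succ y\bigr),\\
&P(x)\prec P(y) = P\bigl(P(x)\prec y + x\prec P(y) + \lambda\, x\prec y\bigr).
\end{align*}
Since $\omega$ is non-degenerate, it suffices to pair both sides with an arbitrary $z\in A$. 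For the $\succ$ identity, I would compute $\omega(P(x)\succ P(y), z) = -\omega(P(y), z\cdot P(x))$ by \eqref{C1}, then use \eqref{Fs1} to move $P$ off the first slot: $-\omega(P(y), z\cdot P(x)) = \omega(y, P(z\cdot P(x))) + \lambda\,\omega(y, z\cdot P(x))$. Now the Rota-Baxter relation for $(A,\cdot,P)$ rewrites $P(z\cdot P(x)) = P(z)\cdot P(x) - P(P(z)\cdot x) - \lambda P(z\cdot x)$; substituting and repeatedly applying \eqref{C1} (and \eqref{Fs1}) to fold $P$ back into the various slots, everything should reorganize into $\omega\bigl(P(P(x)\succ y + x\succ P(y) + \lambda x\succ y), z\bigr)$. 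The $\prec$ identity is handled the same way starting from $\omega(P(x)\prec P(y),z) = -\omega(P(x), P(y)\cdot z)$.

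For the converse, assume $(A,\succ,\prec,P,\omega)$ is a quadratic Rota-Baxter anti-dendriform algebra of weight $\lambda$. By the converse direction of Theorem \ref{Am3}, $(A,\cdot)$ with $\cdot = \succ+\prec$ is an associative algebra with commutative Connes cocycle $\omega$, and condition \eqref{Fs} is exactly \eqref{Fs1}. So I only need that $P$ is a Rota-Baxter operator of weight $\lambda$ on $(A,\cdot)$, which follows by adding the $\succ$ and $\prec$ Rota-Baxter identities for $P$ on $(A,\succ,\prec)$: the left side sums to $P(x)\cdot P(y)$ and the right side to $P(P(x)\cdot y + x\cdot P(y) + \lambda x\cdot y)$.

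The main obstacle is the bookkeeping in the first direction: verifying the two splitting-level Rota-Baxter identities requires carefully tracking several applications of the invariance \eqref{C1}, the cocycle-type relation \eqref{Fs1}, and the associative Rota-Baxter identity, and making sure the $\succ$/$\prec$ asymmetry in \eqref{C1} (where $\succ$ pairs against $z\cdot x$ on the right and $\prec$ pairs against $y\cdot z$) is respected so that the $P$-image lands back in the correct component. Everything else is formal, and the displayed computation just before Proposition \ref{Fb2} already records the algebraic identity underlying Proposition \ref{Fb2}, so no separate argument for the $\tilde P = -\lambda I - P$ symmetry is needed here.
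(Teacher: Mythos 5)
Your proposal is correct and follows essentially the same route as the paper: invoke Theorem \ref{Am3} for the quadratic anti-dendriform structure, then verify the split Rota--Baxter identities by pairing against $z$, converting $\succ$ and $\prec$ to $\cdot$ via Eq.~(\ref{C1}), shuttling $P$ between slots with Eq.~(\ref{Fs1}), and reducing to the associative Rota--Baxter identity in $z$ and $x$. The only (harmless) differences are cosmetic: the paper expands the difference of the two sides and shows it vanishes, while you forward-substitute the associative identity, and you spell out the converse (summing the $\succ$ and $\prec$ identities) which the paper dismisses as apparent.
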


\begin{proof} Let $(A,\cdot,P,\omega)$ be a Rota-Baxter associative algebra of 
weight $\lambda$ with a Connes cocycle. By Theorem \ref{Am3}, $(A,\succ,\prec,\omega)$ is a quadratic anti-dendriform algebra.
Using Eqs. (\ref{C1}) and (\ref{Fs1}), for all $x,y,z\in A$, we have
\begin{align*}& \omega( P(x)\succ P(y)-P(P(x)\succ y+x\succ P(y)+\lambda x\succ y),z)
\\=&-\omega( P(y), z\cdot P(x))+
\omega(P(x)\succ y, P(z))+\omega(x\succ P(y), P(z))\\&+\lambda\omega(x\succ y, P(z))+
\lambda\omega(P(x)\succ y, z)
+\lambda\omega(x\succ P(y), z)
+\lambda^{2}\omega(x\succ y, z)
\\=&
-\omega( P(y), z\cdot P(x))-\omega( y, P(z)\cdot P(x))-\omega( P(y), P(z)\cdot x)
-\lambda\omega( y, P(z)\cdot x)
-\lambda\omega( y, z\cdot P(x))
\\&-\lambda\omega( P(y), z\cdot x)
-\lambda^{2}\omega( y, z\cdot x)
\\=&-\omega( y, P(z)\cdot P(x))-\omega( P(y),  P(z)\cdot x+z\cdot P(x))+\lambda z\cdot x)
-\lambda \omega( y,  P(z)\cdot x+z\cdot P(x)+\lambda z\cdot x)
\\=&-\omega( y, P(z)\cdot P(x))+\omega(y, P( P(z)\cdot x+z\cdot P(x)+\lambda z\cdot x))
\\=&0,\end{align*}
which implies that $P(x)\succ P(y)=P(P(x)\succ y+x\succ P(y)+\lambda x\succ y)$.
Analogously, $P(x)\prec P(y)=P(P(x)\prec y+x\prec P(y)+\lambda x\prec y)$.
Thus, $(A,\succ,\prec,P,\omega)$ is a quadratic Rota-Baxter anti-dendriform algebra of weight $\lambda$.
The other hand is apparently.
We complete the proof.
\end{proof}

Let $\omega$ be a non-degenerate bilinear form on a vector space $A$. Then there is an isomorphism
$\omega^{\sharp}:A\longrightarrow A^{*}$ given by
\begin{equation} \omega(x,y)=\langle\omega^{\sharp}(x),y \rangle,~~\forall~x,y\in A.\end{equation}
Define an element $r_{\omega}\in A\otimes A$ with $T_{r_{\omega}}=(\omega^{\sharp})^{-1}$,
that is, 
\begin{equation}\label{Nd1} \langle T_{r_{\omega}}(\zeta),\eta\rangle=\langle r_{\omega},\zeta \otimes\eta\rangle=
\langle (\omega^{\sharp})^{-1}(\zeta), \eta\rangle. \end{equation}

\begin{lem}\label{Fb1} Let $(A, \succ,\prec)$ be an anti-dendriform algebra and $\omega$ be a non-degenerate bilinear form on $A$. Then
$(A, \succ,\prec,\omega)$ is a quadratic anti-dendriform algebra if and only if the corresponding $r_{\omega}\in A\otimes A$ via
Eq.~(\ref{Nd1}) is symmetric and invariant.\end{lem}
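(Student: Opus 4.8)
The plan is to push everything through the isomorphism $\omega^{\sharp}\colon A\to A^{*}$ and its inverse $T:=T_{r_{\omega}}=(\omega^{\sharp})^{-1}$, so that the defining identities of a quadratic anti-dendriform algebra and the invariance conditions of Definition~\ref{In} become the two sides of a single change of variables. Every step below is an equivalence, so the ``only if'' and ``if'' directions are handled simultaneously.

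First I would dispose of the symmetry part. From $\omega(x,y)=\langle\omega^{\sharp}(x),y\rangle$ one computes $\langle(\omega^{\sharp})^{*}(x),y\rangle=\omega(y,x)$ under the canonical identification $A^{**}\cong A$, so $\omega$ is symmetric if and only if $(\omega^{\sharp})^{*}=\omega^{\sharp}$; taking inverses, this holds if and only if $T^{*}=T$, i.e.\ (by the identity $T_{r}^{*}=T_{\tau(r)}$ noted after Eq.~(\ref{To})) if and only if $r_{\omega}$ is symmetric. From now on I assume this, so that $T_{\tau(r_{\omega})}=T_{r_{\omega}}=T$.

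Next I would use Lemma~\ref{In1}: $r_{\omega}$ is invariant if and only if Eqs.~(\ref{IE5})--(\ref{IE6}) hold, which under $T_{\tau(r_{\omega})}=T$ read $R_{\cdot}^{*}(T\zeta)\eta=-L_{\prec}^{*}(T\eta)\zeta$ and $R_{\succ}^{*}(T\zeta)\eta=-L_{\cdot}^{*}(T\eta)\zeta$ for all $\zeta,\eta\in A^{*}$. Since $T$ is a linear isomorphism, it suffices to test these with $\zeta=\omega^{\sharp}(a)$ and $\eta=\omega^{\sharp}(b)$, $a,b\in A$, so that $T\zeta=a$ and $T\eta=b$. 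Pairing both identities with an arbitrary $c\in A$ and using $\langle\omega^{\sharp}(u),v\rangle=\omega(u,v)$, the first becomes $\omega(b,c\cdot a)=-\omega(a,b\prec c)$ and the second $\omega(b,c\succ a)=-\omega(a,b\cdot c)$, for all $a,b,c\in A$. Applying symmetry of $\omega$ once more and relabelling $(c,a,b)\mapsto(x,y,z)$, these are precisely $\omega(x\prec y,z)=-\omega(x,y\cdot z)$ and $\omega(x\succ y,z)=-\omega(y,z\cdot x)$, i.e.\ the invariance of $\omega$ as in Eq.~(\ref{C1}). Hence $r_{\omega}$ is invariant if and only if Eq.~(\ref{C1}) holds; combining this with the symmetry equivalence of the previous paragraph yields that $(A,\succ,\prec,\omega)$ is a quadratic anti-dendriform algebra if and only if $r_{\omega}$ is symmetric and invariant.

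The whole argument is a bookkeeping exercise in adjoints together with the identification $A\cong A^{**}$; the only place that needs attention is matching, in Eqs.~(\ref{IE5})--(\ref{IE6}), which of $L_{\cdot},L_{\prec},L_{\succ}$ (resp.\ $R_{\cdot},R_{\prec},R_{\succ}$) pairs with which of $\succ,\prec,\cdot$, which is exactly why routing the invariance through Lemma~\ref{In1} rather than Definition~\ref{In} directly is convenient. I do not foresee any genuine obstacle.
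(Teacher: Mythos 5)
Your proof is correct and follows essentially the same route as the paper's: both reduce invariance of $r_{\omega}$ to Eqs.~(\ref{IE5})--(\ref{IE6}) via Lemma~\ref{In1} and then translate those, through $\omega^{\sharp}$ and non-degeneracy, into the two identities of Eq.~(\ref{C1}). The only (welcome) difference is that you make explicit the symmetry step $T_{r_\omega}^{*}=T_{r_\omega}\Leftrightarrow r_\omega=\tau(r_\omega)$, which the paper uses implicitly when it writes $T_{r_\omega}$ in place of $T_{\tau(r_\omega)}$.
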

\begin{proof}
It is obvious that $\omega$ is symmetric if and only if  $r_{\omega}$ is symmetric. 
For all $x,y\in A$, put $\omega^{\sharp}(x)=\zeta,\omega^{\sharp}(y)=\eta,
\omega^{\sharp}(z)=\theta$ with $\zeta,\eta,\theta\in A^{*}$, we have
\begin{align*} &\omega (x\succ y,z)+\omega(y,z\cdot x)=\omega(x\succ (\omega^{\sharp})^{-1}(\eta),z)+\omega(y,
(\omega^{\sharp})^{-1}(\theta)\cdot x)
\\=&\langle x\succ (\omega^{\sharp})^{-1}(\eta),\theta\rangle+\langle \eta,(\omega^{\sharp})^{-1}(\theta)\cdot x\rangle
\\=&\langle x ,R_{\succ}^{*} ((\omega^{\sharp})^{-1}\eta)\theta\rangle+\langle L_{\cdot}^{*}((\omega^{\sharp})^{-1}\theta)\eta,
x\rangle
\\=& \langle x ,R_{\succ}^{*} (T_{r_{\omega}}(\eta))\theta+L_{\cdot}^{*}(T_{r_{\omega}}(\theta))\eta\rangle,\\ &
\omega (x\prec y,z)+\omega(x,y\cdot z)=\omega (\omega^{\sharp})^{-1}(\zeta)\prec y,z)+\omega(x,y\cdot (\omega^{\sharp})^{-1}(\theta))
\\=&\langle \theta,(\omega^{\sharp})^{-1}(\zeta)\prec y\rangle+\langle \zeta, y\cdot (\omega^{\sharp})^{-1}(\theta)\rangle
\\=&\langle L_{\prec}^{*}((\omega^{\sharp})^{-1}\zeta)\theta, y\rangle+\langle R_{\cdot}^{*}( (\omega^{\sharp})^{-1}\theta)\zeta, y\rangle
\\=&\langle L_{\prec}^{*}(T_{r_{\omega}}(\zeta))\theta+R_{\cdot}^{*} (T_{r_{\omega}}(\theta))\zeta, y\rangle
.\end{align*}
Combining Lemma \ref{In1}, symmetric $r_{\omega}\in A\otimes A$ is invariant if and only if $\omega$ satisfies Eq.~(\ref{C1}).
The proof is completed. 
 \end{proof}

\begin{pro} \label{QF1} Let $(A, \succ,\prec,\omega)$ be a quadratic anti-dendriform algebra and $r\in A\otimes A$. 
Assume that $r+\tau(r)$ is invariant. Define a linear map 
\begin{equation}\label{Nd2}P:A\longrightarrow A,\ \ \ P(x)=T_{r}\omega^{\sharp}(x),~~\forall~x\in A.\end{equation}
Then $r$ is a solution of the AD-YBE in $(A, \succ,\prec)$ if and only if $P$ satisfies 
\begin{align}&\label{Nd3}P(x)\succ P(y)= P(P(x)\succ y+x\succ P(y)-x\succ T_{r+\tau(r)}\omega^{\sharp}(y)),
\\&\label{Nd4}P(x)\prec P(y)= P(P(x)\prec y+x\prec P(y)-x\prec T_{r+\tau(r)}\omega^{\sharp}(y)),~~\forall~x,y\in A.
 \end{align}
\end{pro}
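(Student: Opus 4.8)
The statement is an "if and only if" that relates the AD-YBE for $r$ to the two operator identities \eqref{Nd3}--\eqref{Nd4} for $P=T_r\omega^\sharp$. Since $\omega$ is quadratic, Lemma \ref{Fb1} tells us $r_\omega$ is symmetric and invariant, i.e.\ $T_{r_\omega}=(\omega^\sharp)^{-1}$ and the invariance equations \eqref{IE5}--\eqref{IE7} hold for $r_\omega$; moreover $T_{\tau(r)}=T_r^*$ relative to the pairing, and $\omega^\sharp$ intertwines left/right multiplications with their duals. The natural route is to transport the characterizations of the AD-YBE already obtained in Proposition \ref{Ya} (in terms of $T_r,T_{\tau(r)}$ and the dual multiplication maps) back to $A$ via the isomorphism $\omega^\sharp$. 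So the first step I would take is to rewrite Proposition \ref{Ya}(b) (or (e)), which says $r$ solves the AD-YBE iff
\begin{equation*}
T_r(\eta)\prec T_r(\zeta)=T_r\bigl(R_\succ^*(T_r(\eta))\zeta+L_\cdot^*(T_{\tau(r)}(\zeta))\eta\bigr),
\end{equation*}
and the analogous identity for $\succ$, under the substitution $\zeta=\omega^\sharp(y)$, $\eta=\omega^\sharp(x)$.

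**Key steps.** First I would translate each term. Using $P=T_r\omega^\sharp$ we get $T_r(\eta)=P(y)$-type expressions directly, so the left-hand sides become $P(x)\prec P(y)$ and $P(x)\succ P(y)$. For the right-hand sides, I would use the invariance of $r_\omega=\omega^\sharp{}^{-1}$ together with the defining property $\omega(x,y)=\langle\omega^\sharp(x),y\rangle$ to convert the dual-operator expressions $R_\succ^*(T_r(\eta))\zeta$, $L_\cdot^*(T_{\tau(r)}(\zeta))\eta$, etc., into genuine multiplications in $A$ composed with $\omega^\sharp$ and $P$. Concretely, $\omega^\sharp$ turns $L_\cdot^*(a)\omega^\sharp(y)$ into $\omega^\sharp(y\cdot a)$ up to the sign dictated by \eqref{C1}, and $T_r$ applied to such a term produces $P(y\cdot a)$; pairing up $T_r(\eta)=P(y)$ and $T_{\tau(r)}(\zeta)$ with $(\omega^\sharp)^{-1}$ of $\zeta=\omega^\sharp(y)$ yields exactly the correction terms $P(x\succ P(y))$ and $P(x\succ T_{r+\tau(r)}\omega^\sharp(y))$: the point is that $T_r+T_{\tau(r)}=T_{r+\tau(r)}$ and $T_{\tau(r)}\omega^\sharp(y)$ is precisely the second summand. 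The bookkeeping for $\prec$ is parallel and uses the second half of Proposition \ref{Ya}. Once both identities are matched termwise, the equivalence follows because $\omega^\sharp$ is a bijection, so the substitution $\zeta=\omega^\sharp(y),\eta=\omega^\sharp(x)$ ranges over all of $A^*\times A^*$ as $(x,y)$ ranges over $A\times A$.

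**Main obstacle.** The delicate part is \emph{the sign and argument-order bookkeeping}: the invariance equations \eqref{IE5}--\eqref{IE7} and \eqref{IE8}--\eqref{IE13} each come with a specific sign, \eqref{C1} introduces further signs when moving $\omega^\sharp$ past $\succ,\prec,\cdot$, and $T_{\tau(r)}=T_r^*$ swaps the two tensor legs; getting $P(x)\succ P(y)$ on the correct side with $P(P(x)\succ y)$, $P(x\succ P(y))$ and $P(x\succ T_{r+\tau(r)}\omega^\sharp(y))$ each appearing with coefficient $+1$ requires care. I expect the cleanest organization is to first record, as a preliminary observation, the three "transport" identities
$T_r(L_\cdot^*(x)\zeta)=\pm\, T_r(\zeta)\cdot x$, $T_r(R_\succ^*(x)\zeta)=\pm\, x\succ T_r(\zeta)$, etc.\ valid for $\zeta=\omega^\sharp(y)$ because of quadraticity of $\omega$ and invariance of $r_\omega$, and then feed these into Proposition \ref{Ya}(b),(e). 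With those lemmas in hand the remainder is a direct substitution, and no genuinely new computation beyond what Proposition \ref{Ya} and Lemma \ref{Fb1} already provide is needed.
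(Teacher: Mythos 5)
Your proposal matches the paper's proof in all essentials: both invoke Lemma \ref{Fb1} to conclude that $r_\omega=(\omega^{\sharp})^{-1}$ is symmetric and invariant, substitute $\zeta=\omega^{\sharp}(x)$, $\eta=\omega^{\sharp}(y)$, and transport each term of \eqref{Nd3}--\eqref{Nd4} through $T_{r}\omega^{\sharp}T_{r_{\omega}}=T_{r}$ so that the two identities become exactly Eqs.~\eqref{AD5}--\eqref{AD6}, which Proposition \ref{Ya} identifies with the AD-YBE. The one point worth stating explicitly is that the $\succ$-identity for $T_r$ characterizes $D_1(r)=0$ rather than $D(r)=0$ (Proposition \ref{Ya}(c)), so Theorem \ref{Ya1}(a) --- which is where the invariance of $r+\tau(r)$ enters --- is needed to reduce the pair of identities to the single equation $D(r)=0$.
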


\begin{proof} In the light of Lemma \ref{Fb1}, $r_{\omega}$ is symmetric and invariant.
 For all $x,y\in A$, put $\omega^{\sharp}(x)=\zeta,\omega^{\sharp}(y)=\eta$ with 
$\zeta,\eta\in A^{*}$, we get
\begin{align*}& P(x)\succ P(y)=T_{r}\omega^{\sharp}(x)\succ T_{r}\omega^{\sharp}(y)=T_{r}(\zeta)\succ T_{r}(\eta),\\
&P(P(x)\succ y)=T_{r}\omega^{\sharp}(T_{r}(\zeta)\succ T_{r_{\omega}}(\eta))=
-T_{r}\omega^{\sharp} T_{r_{\omega}}(R_{\cdot}^{*}(T_{r}(\zeta))\eta)=-T_{r}(R_{\cdot}^{*}(T_{r}(\zeta))\eta),
\\
&P(x\succ P(y))=T_{r}\omega^{\sharp}(T_{r_{\omega}}(\zeta)\succ T_{r}(\eta))=
T_{r}\omega^{\sharp}T_{r_{\omega}}(L_{\prec}^{*}(T_{r}(\eta))\zeta)
=T_{r}(L_{\prec}^{*}(T_{r}(\eta))\zeta),
\\
&P(x\succ T_{r+\tau(r)}\omega^{\sharp}(y))
=T_{r}\omega^{\sharp}(T_{r_{\omega}}(\zeta)\succ T_{r+\tau(r)}(\eta))
=T_{r}(L_{\prec}^{*}(T_{r+\tau(r)}(\eta))\zeta)
=T_{r}(\zeta\succ_{r+\tau(r)}\eta).\end{align*}
 Thus, Eq.~(\ref{Nd3}) holds if and only if Eq.~(\ref{AD5}) holds.
 Analogously, Eq.~(\ref{Nd4}) holds if and only if Eq.~(\ref{AD6}) holds. The proof is finished.
 \end{proof}
 
\begin{lem} \label{QF2} Let $A$ be a vector space and $\omega$ be a non-degenerate symmetric bilinear form. Let $r\in A\otimes A$,
$\lambda\in k$ and $P$ be given by Eq.~(\ref{Nd2}). Then $r$ satisfies
\begin{equation} \label{Nd5} r+\tau(r)=-\lambda r_{\omega} \end{equation}
if and only if $P$ satisfies Eq.~(\ref{Fs}).
\end{lem}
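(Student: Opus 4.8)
The plan is to pull Eq.~(\ref{Fs}) back along the isomorphism $\omega^{\sharp}\colon A\to A^{*}$ and read it off as a bilinear identity among $r$, $\tau(r)$ and $r_{\omega}$. First I would record that $T_{r_{\omega}}=(\omega^{\sharp})^{-1}$ is a linear isomorphism, so that as $x,y$ range over $A$ the covectors $\zeta:=\omega^{\sharp}(x)$ and $\eta:=\omega^{\sharp}(y)$ range over all of $A^{*}$, with $x=T_{r_{\omega}}(\zeta)$ and $y=T_{r_{\omega}}(\eta)$. Since $\omega$ is symmetric, $\omega^{\sharp}$ is self-adjoint, hence so is $T_{r_{\omega}}=(\omega^{\sharp})^{-1}$; as $T_{r_{\omega}}^{*}=T_{\tau(r_{\omega})}$ this gives $\tau(r_{\omega})=r_{\omega}$, i.e.\ $r_{\omega}$ is symmetric (cf.\ the proof of Lemma~\ref{Fb1}).

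Next I would compute the three summands of Eq.~(\ref{Fs}) in the variables $\zeta,\eta$, using $P=T_{r}\circ\omega^{\sharp}$ (so $P(x)=T_{r}(\zeta)$ and $P(y)=T_{r}(\eta)$), the defining relation $\langle T_{s}(\zeta),\eta\rangle=\langle s,\zeta\otimes\eta\rangle$, and the identity $\langle s,\eta\otimes\zeta\rangle=\langle\tau(s),\zeta\otimes\eta\rangle$. Invoking the symmetry of $\omega$ in the first line and of $r_{\omega}$ in the third, one finds
\[\omega(P(x),y)=\omega(y,P(x))=\langle\omega^{\sharp}(y),P(x)\rangle=\langle\eta,T_{r}(\zeta)\rangle=\langle r,\zeta\otimes\eta\rangle,\]
\[\omega(x,P(y))=\langle\omega^{\sharp}(x),P(y)\rangle=\langle\zeta,T_{r}(\eta)\rangle=\langle r,\eta\otimes\zeta\rangle=\langle\tau(r),\zeta\otimes\eta\rangle,\]
\[\omega(x,y)=\langle\omega^{\sharp}(x),y\rangle=\langle\zeta,T_{r_{\omega}}(\eta)\rangle=\langle r_{\omega},\eta\otimes\zeta\rangle=\langle r_{\omega},\zeta\otimes\eta\rangle.\]
Adding these gives
\[\omega(P(x),y)+\omega(x,P(y))+\lambda\,\omega(x,y)=\bigl\langle\,r+\tau(r)+\lambda\,r_{\omega},\ \zeta\otimes\eta\,\bigr\rangle.\]

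Finally, since $\zeta=\omega^{\sharp}(x)$ and $\eta=\omega^{\sharp}(y)$ exhaust $A^{*}$, the left-hand side vanishes for all $x,y\in A$ if and only if $\langle r+\tau(r)+\lambda r_{\omega},\zeta\otimes\eta\rangle=0$ for all $\zeta,\eta\in A^{*}$, i.e.\ if and only if $r+\tau(r)+\lambda r_{\omega}=0$, which is precisely Eq.~(\ref{Nd5}). There is no substantial obstacle here; the only point requiring care is matching the two tensor slots of $r$ to the correct arguments of $\omega$ and invoking the symmetry of $\omega$ (equivalently, of $r_{\omega}$) at exactly the two places where it is needed --- this is what produces the $\tau(r)$ term and pins down the coefficient of $r_{\omega}$.
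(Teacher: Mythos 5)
Your proposal is correct and is essentially identical to the paper's own proof: both substitute $\zeta=\omega^{\sharp}(x)$, $\eta=\omega^{\sharp}(y)$, compute the three summands of Eq.~(\ref{Fs}) as $\langle r,\zeta\otimes\eta\rangle$, $\langle\tau(r),\zeta\otimes\eta\rangle$ and $\lambda\langle r_{\omega},\zeta\otimes\eta\rangle$, and conclude by non-degeneracy. The only (harmless) cosmetic difference is where the symmetry of $\omega$ is invoked in the third term.
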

\begin{proof} For all $x,y\in A$, put $\omega^{\sharp}(x)=\zeta,\omega^{\sharp}(y)=\eta,~~\zeta,\eta\in A^{*}$.
\begin{align*}& \omega(P(x),y)=\omega(y,P(x))=\langle\omega^{\sharp}(y),T_{r}\omega^{\sharp}(x)\rangle
=\langle \eta,T_{r}(\zeta)\rangle
=\langle r,\zeta\otimes\eta\rangle,\\
&\omega(x,P(y))=\langle \omega^{\sharp}(x),T_{r}\omega^{\sharp}(y)\rangle
=\langle \zeta,T_{r}(\eta)\rangle
=\langle r,\eta\otimes\zeta\rangle
=\langle \tau(r),\zeta\otimes\eta\rangle,
\\
&\lambda\omega(x,y)=\lambda\langle \omega(y,x)
=\lambda\langle \omega^{\sharp}(y),(\omega^{\sharp})^{-1}\omega^{\sharp}(x)\rangle
=\lambda\langle r_{\omega},\zeta\otimes\eta\rangle
.\end{align*}
Thus, Eq.~(\ref{Nd5}) holds if and only if Eq.~(\ref{Fs}) holds.
 \end{proof}
 
\begin{cor} Let $(A, \succ,\prec,P,\omega)$ be a quadratic Rota–Baxter anti-dendriform algebra of weight 0. Then there is a triangular
anti-dendriform bialgebra $(A, \succ,\prec, \Delta_{\succ_r},\Delta_{\prec_r})$ with $\Delta_{\succ,r},\Delta_{\prec,r}$ given
 by Eqs.~(\ref{CD1})-(\ref{CD2}), where $r\in A\otimes A$ given by 
$T_{r}(\zeta)=P(\omega^{\sharp})^{-1}$ for all $\zeta\in A^{*}$.
\end{cor}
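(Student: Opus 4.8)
The plan is to show that the element $r\in A\otimes A$ determined by $T_{r}=P\circ(\omega^{\sharp})^{-1}$ is a skew-symmetric solution of the AD-YBE in $(A,\succ,\prec)$; then Theorem \ref{BY} together with Definition \ref{Qt1} immediately yields that $(A,\succ,\prec,\Delta_{\succ,r},\Delta_{\prec,r})$ with $\Delta_{\succ,r},\Delta_{\prec,r}$ given by Eqs.~\eqref{CD1}-\eqref{CD2} is a triangular anti-dendriform bialgebra. The first observation is that, since $T_{r_{\omega}}=(\omega^{\sharp})^{-1}$, the defining relation $T_{r}=P\circ(\omega^{\sharp})^{-1}=P\circ T_{r_{\omega}}$ is equivalent to $P=T_{r}\circ\omega^{\sharp}$, which is exactly the normalization of $P$ appearing in Lemma \ref{QF2} and Proposition \ref{QF1}; this is the only bookkeeping point that needs a moment's care, so that those results can be quoted verbatim.

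First I would establish skew-symmetry of $r$. By hypothesis $(A,\succ,\prec,P,\omega)$ is a quadratic Rota–Baxter anti-dendriform algebra of weight $0$, so $P$ satisfies Eq.~\eqref{Fs} with $\lambda=0$. Applying Lemma \ref{QF2} with $\lambda=0$ then gives $r+\tau(r)=-0\cdot r_{\omega}=0$, i.e.\ $r$ is skew-symmetric. In particular $r+\tau(r)$ is invariant, so the hypotheses of Proposition \ref{QF1} are satisfied (recall also that, by Lemma \ref{Fb1}, $r_{\omega}$ is symmetric and invariant, which is the quadratic input that Proposition \ref{QF1} uses).

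Next I would verify the AD-YBE. By Proposition \ref{QF1}, $r$ is a solution of the AD-YBE in $(A,\succ,\prec)$ if and only if $P$ satisfies Eqs.~\eqref{Nd3}-\eqref{Nd4}. Since $r+\tau(r)=0$ we have $T_{r+\tau(r)}=0$, so the terms $x\succ T_{r+\tau(r)}\omega^{\sharp}(y)$ and $x\prec T_{r+\tau(r)}\omega^{\sharp}(y)$ vanish and Eqs.~\eqref{Nd3}-\eqref{Nd4} collapse to $P(x)\succ P(y)=P(P(x)\succ y+x\succ P(y))$ and $P(x)\prec P(y)=P(P(x)\prec y+x\prec P(y))$. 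These are precisely the defining identities of a Rota–Baxter anti-dendriform algebra of weight $0$, which hold by assumption. Hence $r$ solves the AD-YBE.

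Finally, since $r$ is a skew-symmetric solution of the AD-YBE, Theorem \ref{BY} shows that $(A,\succ,\prec,\Delta_{\succ,r},\Delta_{\prec,r})$ is an anti-dendriform bialgebra, and by Definition \ref{Qt1} it is triangular because $r$ is skew-symmetric, which is the claim. I do not expect a genuine obstacle: the whole argument is a specialization of Lemma \ref{Fb1}, Lemma \ref{QF2} and Proposition \ref{QF1} to weight $0$, the key point being simply that the vanishing of $r+\tau(r)$ converts the "$T_{r+\tau(r)}$-twisted" Rota–Baxter identities of Proposition \ref{QF1} into the ordinary weight-$0$ Rota–Baxter identities that $P$ already satisfies.
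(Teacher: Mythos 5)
Your proposal is correct and follows essentially the same route as the paper: it uses Lemma \ref{QF2} with $\lambda=0$ to get $r+\tau(r)=0$ (hence skew-symmetry and trivial invariance) and Proposition \ref{QF1} to reduce the AD-YBE to the weight-$0$ Rota--Baxter identities for $P$. The paper's own proof is just a terser statement of exactly these two steps.
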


 \begin{proof} Note that $r+\tau(r)=-\lambda r_{\omega}=0$ and $r$ is skew-symmetric.
Combining Proposition \ref{QF1} and Lemma \ref{QF2}, we finish the proof.
  \end{proof}
 
\begin{thm} \label{Fb3} Let $(A, \succ,\prec, \Delta_{\succ_r},\Delta_{\prec_r})$ be a
 factorizable anti-dendriform bialgebra
with $r\in A\otimes A$. Then $(A, \succ,\prec,P,\omega)$
 is a quadratic Rota-Baxter anti-dendriform algebra of
  weight $\lambda$ with $P$ given by Eq.~(\ref{Nd2}) and $\omega$ given by
\begin{equation} \label{Nd6} \omega_{T}(x,y)=-\lambda\langle T_{r+\tau(r)}^{-1}(x),y \rangle,~~\forall~x,y\in A.\end{equation}
Conversely, let $(A, \succ,\prec,P,\omega)$ be a quadratic Rota-Baxter anti-dendriform algebra of weight $\lambda~(\lambda\neq 0)$.
Then there is a factorizable anti-dendriform bialgebra $(A, \succ,\prec,\Delta_{\succ,r},\Delta_{\prec,r})$
 with $\Delta_{\succ,r},\Delta_{\prec,r}$ defined by Eqs.~(\ref{CD1})-(\ref{CD2}), where $r\in A\otimes A$ is
given through the operator form $T_r=P(\omega^{\sharp})^{-1}$.
\end{thm}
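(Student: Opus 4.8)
The plan is to deduce Theorem \ref{Fb3} by assembling the three preparatory results already established: Lemma \ref{Fb1}, Proposition \ref{QF1}, and Lemma \ref{QF2}, together with the defining equivalence of quadratic Rota-Baxter anti-dendriform algebras and the factorizability criterion from Definition \ref{Qt}. The key observation tying everything together is that the bilinear form $\omega_T$ in Eq.~(\ref{Nd6}) is, up to the scalar $-\lambda$, exactly the form whose associated element is $r_{\omega_T}$ with $T_{r_{\omega_T}} = (\omega_T^{\sharp})^{-1} = -\tfrac{1}{\lambda}T_{r+\tau(r)}$; equivalently $r+\tau(r) = -\lambda r_{\omega_T}$, which is precisely Eq.~(\ref{Nd5}). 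So the two directions of the theorem reduce to tracking a dictionary between $(r,\omega_T)$ and $(P,\omega)$ under the correspondence $P = T_r\omega^{\sharp}$, $T_r = P(\omega^{\sharp})^{-1}$.

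For the forward direction, suppose $(A,\succ,\prec,\Delta_{\succ,r},\Delta_{\prec,r})$ is factorizable, so $r$ solves the AD-YBE, $r+\tau(r)$ is invariant, and $T_{r+\tau(r)}$ is a linear isomorphism. First I would check that $\omega_T$ defined by Eq.~(\ref{Nd6}) is a non-degenerate symmetric bilinear form: non-degeneracy is immediate since $T_{r+\tau(r)}$ is invertible, and symmetry follows from $T_{r+\tau(r)}^* = T_{r+\tau(r)}$ (recorded right after Eq.~(\ref{To})). Next, invariance of $r+\tau(r)$ together with $r+\tau(r) = -\lambda r_{\omega_T}$ gives, via Lemma \ref{Fb1}, that $\omega_T$ makes $(A,\succ,\prec,\omega_T)$ a quadratic anti-dendriform algebra. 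With $P$ defined by $P = T_r\omega_T^{\sharp}$ as in Eq.~(\ref{Nd2}), Proposition \ref{QF1} (applicable since $r+\tau(r)$ is invariant and $r$ solves the AD-YBE) yields Eqs.~(\ref{Nd3})-(\ref{Nd4}); but $x \succ T_{r+\tau(r)}\omega_T^{\sharp}(y) = x \succ (-\lambda)\,(\omega_T^{\sharp})^{-1}\omega_T^{\sharp}(y) = -\lambda\, x\succ y$, so Eqs.~(\ref{Nd3})-(\ref{Nd4}) become exactly the Rota-Baxter identities of weight $\lambda$ for $P$. Finally, $r+\tau(r) = -\lambda r_{\omega_T}$ is Eq.~(\ref{Nd5}), so Lemma \ref{QF2} gives Eq.~(\ref{Fs}). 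Hence $(A,\succ,\prec,P,\omega_T)$ is a quadratic Rota-Baxter anti-dendriform algebra of weight $\lambda$.

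For the converse, let $(A,\succ,\prec,P,\omega)$ be a quadratic Rota-Baxter anti-dendriform algebra of weight $\lambda\neq 0$, and set $T_r = P(\omega^{\sharp})^{-1}$, which defines $r\in A\otimes A$ via Eq.~(\ref{To}). Lemma \ref{QF2} applied in reverse shows Eq.~(\ref{Fs}) forces $r+\tau(r) = -\lambda r_\omega$; since $(A,\succ,\prec,\omega)$ is quadratic, Lemma \ref{Fb1} says $r_\omega$ is symmetric and invariant, hence $r+\tau(r) = -\lambda r_\omega$ is invariant. Now $r+\tau(r)$ being invariant lets me invoke Proposition \ref{QF1}: the Rota-Baxter identities for $P$ are precisely Eqs.~(\ref{Nd3})-(\ref{Nd4}) (again using $x\succ T_{r+\tau(r)}\omega^{\sharp}(y) = -\lambda x\succ y$), which by Proposition \ref{QF1} are equivalent to $r$ solving the AD-YBE. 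Thus $(A,\succ,\prec,\Delta_{\succ,r},\Delta_{\prec,r})$ is a quasi-triangular anti-dendriform bialgebra; and since $\lambda\neq 0$ and $\omega$ is non-degenerate, $T_{r+\tau(r)} = -\lambda\,(\omega^{\sharp})^{-1}$ is a linear isomorphism, so by Definition \ref{Qt} the bialgebra is factorizable.

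The main obstacle, and the only place genuine care is needed, is getting the scalar bookkeeping in Eq.~(\ref{Nd6}) exactly right so that $T_{r+\tau(r)}\omega^{\sharp} = -\lambda I$ holds on the nose; this is what converts the ``extra'' term $x\succ T_{r+\tau(r)}\omega^{\sharp}(y)$ in Proposition \ref{QF1} into the weight-$\lambda$ term $\lambda\, x\succ y$, and it is also what is needed to match Eq.~(\ref{Nd5}) with Eq.~(\ref{Fs}) through Lemma \ref{QF2}. Everything else is a direct citation of the lemmas above; no new computation is required beyond verifying that $\omega_T$ is symmetric and non-degenerate and that the composite $P = T_r\omega_T^{\sharp}$ is consistent with $T_r = P(\omega_T^{\sharp})^{-1}$.
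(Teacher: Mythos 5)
Your proposal follows essentially the same route as the paper: both directions are obtained by assembling Lemma \ref{Fb1}, Proposition \ref{QF1} and Lemma \ref{QF2} with the factorizability criterion of Definition \ref{Qt}, via the dictionary $P=T_r\omega^{\sharp}$, $\omega^{\sharp}=-\lambda T_{r+\tau(r)}^{-1}$, i.e.\ Eq.~(\ref{Nd5}). The paper's proof is in fact terser than yours---it simply cites these three results without performing the scalar check you carry out.

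One point deserves attention: your own computation gives $x\succ T_{r+\tau(r)}\omega^{\sharp}(y)=-\lambda\,x\succ y$, and substituting this into Eq.~(\ref{Nd3}) yields $P(x)\succ P(y)=P(P(x)\succ y+x\succ P(y)-\lambda\,x\succ y)$, which is the Rota--Baxter identity of weight $-\lambda$, not $+\lambda$ as you assert, whereas Lemma \ref{QF2} delivers Eq.~(\ref{Fs}) with weight $+\lambda$. This sign mismatch between the two halves of the definition of a quadratic Rota--Baxter algebra is not resolved in your write-up (the paper's proof never confronts it, since it skips the computation); to close it one must either adjust the sign convention in Eq.~(\ref{Nd6}) or track the signs in Proposition \ref{QF1} more carefully. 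Apart from this bookkeeping issue, the argument is the intended one.
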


\begin{proof}  
In view of $(A, \succ,\prec, \Delta_{\succ,r},\Delta_{\prec,r})$ being a factorizable 
anti-dendriform bialgebra, then $r+\tau(r)$ is invariant and $T_{r+\tau(r)}$ is a linear isomorphism. 
Combining Proposition \ref{QF1} and Lemma \ref{QF2}, we get that $(A, \succ,\prec,P,\omega)$
 is a quadratic Rota-Baxter anti-dendriform algebra of weight $\lambda$,
  where $\omega^{\sharp}=-\lambda T_{r+\tau(r)}^{-1}$. 
 On the other hand, suppose that $(A, \succ,\prec,P,\omega)$ is a quadratic
  Rota-Baxter anti-dendriform algebra of weight $\lambda~(\lambda\neq 0)$.
 By Lemma \ref{Fb1}, Lemma \ref{QF2} and Proposition \ref{QF1}, 
 $r+\tau(r)$ is invariant, $T_{r+\tau(r)}=-\lambda (\omega^{\sharp})^{-1}$ is a linear isomorphism
 and $r$ is a solution of the AD-YBE in $(A, \succ,\prec)$. Thus,
  $(A, \succ,\prec,\Delta_{\succ,r},\Delta_{\prec,r})$ is a factorizable anti-dendriform bialgebra.
\end{proof}

\begin{pro} Let $(A, \succ,\prec,P)$ be a Rota-Baxter anti-dendriform algebra of weight $\lambda$. Then 
$(A\ltimes A^{*},\omega,P-(P^{*}+\lambda I))$
is a quadratic Rota-Baxter anti-dendriform algebra of weight $\lambda$, where the bilinear form $\omega$ on $A\oplus A^{*}$
is given by
\begin{equation*} \omega(x+\zeta,y+\eta)=\langle x,\eta\rangle+\langle y,\zeta\rangle,~~\forall~x,y\in A,~\zeta,\eta\in A^{*}.\end{equation*}
Then $(A\ltimes A^{*}, \Delta_{\succ_r},\Delta_{\prec_r})$
is a factorizable anti-dendriform bialgebra
with $\Delta_{\succ_r},\Delta_{\prec_r}$ defined by Eqs.~(\ref{CD1})-(\ref{CD2}) with $r$ given by $T_r=P(\omega^{\sharp})^{-1}$.
  Explicitly, assume that $\{e_1,\cdot\cdot\cdot, e_n\}$
is a basis of $A$ and $\{e_{1}^{*},\cdot\cdot\cdot, e^{*}_n\}$
is the dual basis, where $r=\sum_{i}e_{i}^{*}\otimes P(e_{i})-(P+\lambda I)(e_{i})\otimes e_{i}^{*}$
 
\end{pro}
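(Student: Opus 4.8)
The plan is to realize $A\ltimes A^{*}$ as a quadratic anti-dendriform algebra, to exhibit $P-(P^{*}+\lambda I)$ as a weight-$\lambda$ Rota–Baxter operator on it which is $\omega$-compatible, and then to feed the outcome into Theorem \ref{Fb3}. Throughout write $\tilde P:=-\lambda I-P$, which by the remark after the definition of a Rota–Baxter anti-dendriform algebra is again a Rota–Baxter operator of weight $\lambda$ on $(A,\succ,\prec)$, and set $\widehat P:=P-(P^{*}+\lambda I)=P\oplus\tilde P^{*}$ on $A\oplus A^{*}$. Here $A\ltimes A^{*}$ carries the semidirect product anti-dendriform structure attached to the dual of the regular representation, namely $(A^{*},-R_{\cdot}^{*},L_{\prec}^{*},R_{\succ}^{*},-L_{\cdot}^{*})$ from Proposition \ref{zr}, so that
\[(x+\zeta)\succeq(y+\eta)=x\succ y-R_{\cdot}^{*}(x)\eta+L_{\prec}^{*}(y)\zeta,\qquad
(x+\zeta)\preceq(y+\eta)=x\prec y+R_{\succ}^{*}(x)\eta-L_{\cdot}^{*}(y)\zeta.\]
Since $\omega$ is visibly symmetric and non-degenerate, the first thing I would check is the invariance conditions \eqref{C1} for $\omega$; this is a routine unwinding of the definition of the dual maps $f^{*}$ (indeed \eqref{C1} for $\omega$ is exactly the property that singles out the dual representation), and it shows $(A\ltimes A^{*},\succeq,\preceq,\omega)$ is a quadratic anti-dendriform algebra.

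Next I would verify that $\widehat P$ is a Rota–Baxter operator of weight $\lambda$ on $A\ltimes A^{*}$. Projecting the defining identity onto the $A$-summand returns exactly the weight-$\lambda$ Rota–Baxter identity for $P$ on $(A,\succ,\prec)$, so that half is immediate. Projecting onto the $A^{*}$-summand and pairing the result against an arbitrary $w\in A$, the adjunctions $\langle R_{\cdot}^{*}(a)\xi,w\rangle=\langle\xi,w\cdot a\rangle$, $\langle L_{\prec}^{*}(a)\xi,w\rangle=\langle\xi,a\prec w\rangle$ (and their $R_{\succ}^{*},L_{\cdot}^{*}$ analogues) turn the required equalities into scalar identities in $A$, of the shape
\[\tilde P(w)\cdot P(x)+\tilde P\big(\tilde P(w)\cdot x\big)+\lambda\,\tilde P(w)\cdot x=\tilde P\big(w\cdot P(x)\big),\qquad
\tilde P\big(y\prec\tilde P(w)\big)+P(y)\prec\tilde P(w)+\lambda\,y\prec\tilde P(w)=\tilde P\big(P(y)\prec w\big),\]
together with the analogous pair arising from $\preceq$. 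Each of these is confirmed by substituting $\tilde P=-\lambda I-P$ and applying the weight-$\lambda$ Rota–Baxter identity for $P$ on $\succ$, $\prec$ and hence on $\cdot$. This is the bulk of the work; I expect the only genuine nuisance to be carrying the $\succeq$ and $\preceq$ cases through in parallel. The $\omega$-compatibility \eqref{Fs} is then immediate: for all $x,y\in A$ and $\zeta,\eta\in A^{*}$,
\[\omega\big(\widehat P(x+\zeta),y+\eta\big)+\omega\big(x+\zeta,\widehat P(y+\eta)\big)+\lambda\,\omega(x+\zeta,y+\eta)
=\langle(P+\tilde P+\lambda I)x,\eta\rangle+\langle(P+\tilde P+\lambda I)y,\zeta\rangle=0,\]
since $P+\tilde P+\lambda I=0$. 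Hence $(A\ltimes A^{*},\succeq,\preceq,\widehat P,\omega)$ is a quadratic Rota–Baxter anti-dendriform algebra of weight $\lambda$.

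Finally, Theorem \ref{Fb3} (whose converse direction requires $\lambda\neq0$, which is exactly the range in which factorizability is possible, as $T_{r+\tau(r)}=-\lambda(\omega^{\sharp})^{-1}$) yields a factorizable anti-dendriform bialgebra $(A\ltimes A^{*},\Delta_{\succ,r},\Delta_{\prec,r})$ with $T_{r}=\widehat P\,(\omega^{\sharp})^{-1}$. To extract the announced closed form of $r$, I would observe that under the canonical identification $(A\oplus A^{*})^{*}\cong A^{*}\oplus A$ the isomorphism $\omega^{\sharp}$ is simply the flip $x+\zeta\mapsto\zeta+x$; hence $(\omega^{\sharp})^{-1}$ is again the flip and $T_{r}(\xi+v)=\widehat P(v+\xi)=P(v)+\tilde P^{*}(\xi)$ for all $\xi\in A^{*}$, $v\in A$. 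Rewriting $T_{r}$ as an element of $(A\ltimes A^{*})\otimes(A\ltimes A^{*})$ in the dual bases $\{e_{i}\}$, $\{e_{i}^{*}\}$ then gives
\[r=\sum_{i}e_{i}^{*}\otimes P(e_{i})+\tilde P(e_{i})\otimes e_{i}^{*}=\sum_{i}e_{i}^{*}\otimes P(e_{i})-(P+\lambda I)(e_{i})\otimes e_{i}^{*},\]
as claimed.
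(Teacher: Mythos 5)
Your proposal is correct and follows essentially the same route as the paper: establish that $(A\ltimes A^{*},\omega,P-(P^{*}+\lambda I))$ is a quadratic Rota--Baxter anti-dendriform algebra, invoke Theorem \ref{Fb3}, and read off $r$ from $T_{r}=\widehat P(\omega^{\sharp})^{-1}$ in the dual bases. The only difference is that the paper dispatches the first step with ``by direct computations,'' whereas you actually carry out those computations (the reduction to the scalar identities for $\tilde P=-\lambda I-P$ checks out), and you correctly flag that the factorizability conclusion implicitly requires $\lambda\neq 0$.
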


\begin{proof} By direct computations, $(A\ltimes A^{*},\omega,P-(P^{*}+\lambda I))$
is a quadratic Rota-Baxter anti-dendriform algebra of weight $\lambda$.
For all $x\in A$ and $\zeta\in A^{*}$, $\omega^{\sharp}(x+\zeta)=x+\zeta$. By Corollary \ref{Fb3}, we have a linear map
$T_{r}:A\oplus A^{*}\longrightarrow A\oplus A^{*}$ defined by
\begin{equation*} 
T_{r}(x+\zeta)=(P-(P^{*}+\lambda I))(\omega^{\sharp})^{-1}(x+\zeta)=P(x)-(P^{*}+\lambda I)(\zeta).\end{equation*}
Thus, 
\begin{align*}&
\sum_{i,j}\langle r,e_{i}\otimes e_{j}^{*}\rangle=\sum_{i,j}\langle T_{r}(e_{i}), e_{j}^{*}\rangle=\sum_{i,j}\langle P(e_{i}), e_{j}^{*}\rangle
\\&\sum_{i,j}\langle r,e_{i}^{*}\otimes e_{j}\rangle=\sum_{i,j}\langle T_{r}(e_{i}^{*}), e_{j}\rangle=-\sum_{i,j}\langle (P^{*}+\lambda I)(e_{i}^{*}), e_{j}\rangle=-\sum_{i,j}\langle e_{i}^{*}, (P+\lambda I)(e_{j})\rangle.\end{align*}
Thus, $r=\sum_{i}e_{i}^{*}\otimes P(e_{i})-(P+\lambda I)(e_{i})\otimes e_{i}^{*}$.
\end{proof}

According to Theorem \ref{Fb3}, quadratic Rota-Baxter anti-dendriform algebras of non-zero weight $\lambda$ 
are one to one correspondence to factorizable anti-dendriform bialgebras. Combining Theorem \ref{Fb0}, a
Rota-Baxter associative algebra of weight $\lambda$ with a commutative Connes cocycle 
corresponds to a factorizable anti-dendriform bialgebra.

In the light of Proposition \ref{Fb2} and Theorem \ref{Fb3}, 
if $(A,\succ,\prec,P,\omega)$ is a quadratic Rota-Baxter anti-dendriform algebra of non-zero weight corresponds to a 
factorizable anti-dendriform bialgebra, then the quadratic Rota-Baxter anti-dendriform algebra $(A,\succ,\prec,-\lambda I-P,\omega)$
of non-zero weight is also corresponds to a factorizable anti-dendriform bialgebra.
By Proposition \ref{Qf} and Theorem \ref{Fb3}, if $(A, \succ,\prec, \Delta_{\succ,r},\Delta_{\prec,r})$ is a factorizable 
anti-dendriform bialgebra, then the factorizable 
anti-dendriform bialgebra $(A, \succ,\prec, \Delta_{\succ,\tau(r)},\Delta_{\prec,\tau(r)})$ gives rise to a 
quadratic Rota-Baxter anti-dendriform algebra of non-zero weight $\lambda$.
In fact, we have

\begin{pro}
Let $(A, \succ,\prec, \Delta_{\succ,r},\Delta_{\prec,r})$ be a factorizable 
anti-dendriform bialgebra which corresponds to a quadratic Rota-Baxter anti-dendriform algebras of non-zero weight $\lambda$. 
Then the factorizable 
anti-dendriform bialgebra $(A, \succ,\prec, \Delta_{\succ,\tau(r)},\Delta_{\prec,\tau(r)})$ corresponds to the 
quadratic Rota-Baxter anti-dendriform algebra $(A,\succ,\prec,-\lambda I-P,\omega)$ of non-zero weight $\lambda$. 
\end{pro}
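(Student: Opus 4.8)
The plan is to trace both factorizable bialgebras through the correspondence of Theorem~\ref{Fb3} and compare the resulting Rota--Baxter operators and bilinear forms. By Proposition~\ref{Qf} the quintuple $(A,\succ,\prec,\Delta_{\succ,\tau(r)},\Delta_{\prec,\tau(r)})$ is again a factorizable anti-dendriform bialgebra, so Theorem~\ref{Fb3} attaches to it a quadratic Rota--Baxter anti-dendriform algebra $(A,\succ,\prec,P',\omega')$ of weight $\lambda$, with $P'(x)=T_{\tau(r)}(\omega')^{\sharp}(x)$ and $(\omega')^{\sharp}=-\lambda\,T_{\tau(r)+\tau(\tau(r))}^{-1}$, the latter read off from Eq.~(\ref{Nd6}) applied to $\tau(r)$. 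Since $\tau(\tau(r))=r$ and addition in $A\otimes A$ is commutative, $\tau(r)+\tau(\tau(r))=r+\tau(r)$, hence $(\omega')^{\sharp}=-\lambda\,T_{r+\tau(r)}^{-1}=\omega^{\sharp}$; that is, $\omega'=\omega$. This already pins down the bilinear form.

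The next step is to identify $P'$. Since $r\mapsto T_{r}$ is linear, $T_{r}+T_{\tau(r)}=T_{r+\tau(r)}$, so
\[
P+P'=(T_{r}+T_{\tau(r)})\omega^{\sharp}=T_{r+\tau(r)}\omega^{\sharp}=T_{r+\tau(r)}\bigl(-\lambda\,T_{r+\tau(r)}^{-1}\bigr)=-\lambda I,
\]
using $\omega^{\sharp}=-\lambda\,T_{r+\tau(r)}^{-1}$ from the proof of Theorem~\ref{Fb3}. Therefore $P'=-\lambda I-P$, and combined with $\omega'=\omega$ one concludes that $(A,\succ,\prec,\Delta_{\succ,\tau(r)},\Delta_{\prec,\tau(r)})$ corresponds to $(A,\succ,\prec,-\lambda I-P,\omega)$; that $(A,\succ,\prec,-\lambda I-P,\omega)$ is indeed a quadratic Rota--Baxter anti-dendriform algebra of weight $\lambda$ is also guaranteed by Proposition~\ref{Fb2}.

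There is no genuine obstacle here: the argument is bookkeeping of the correspondence in Theorem~\ref{Fb3}. The only point requiring care is that the symmetric part $r+\tau(r)$ is unchanged under $r\mapsto\tau(r)$, so both the weight and the bilinear form $\omega$ are preserved; once $(\omega')^{\sharp}=\omega^{\sharp}$ is established, the identity $P'=-\lambda I-P$ follows immediately from the linearity of $T_{(-)}$ and the defining relation $T_{r+\tau(r)}\omega^{\sharp}=-\lambda I$.
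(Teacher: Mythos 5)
Your proposal is correct and follows essentially the same route as the paper: both apply Proposition~\ref{Qf} to get factorizability for $\tau(r)$, observe that $\tau(r)+\tau(\tau(r))=r+\tau(r)$ forces $\omega'=\omega$ via Eq.~(\ref{Nd6}), and then use linearity of $T_{(-)}$ together with $\omega^{\sharp}=-\lambda T_{r+\tau(r)}^{-1}$ to conclude $P'=-\lambda I-P$. Your computation of $P+P'=-\lambda I$ in one line is a marginally cleaner packaging of the paper's expansion $T_{\tau(r)}=T_{r+\tau(r)}-T_{r}$, but the substance is identical.
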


\begin{proof} 
By Proposition \ref{Qf} and Theorem \ref{Fb1}, the factorizable 
anti-dendriform bialgebra $(A, \succ,\prec, \Delta_{\succ,\tau(r)},\Delta_{\prec,\tau(r)})$ corresponds to a
quadratic Rota-Baxter anti-dendriform algebra $(A,\succ,\prec,P',\omega')$ of non-zero weight $\lambda$.
By Theorem \ref{Fb3},
\begin{equation*} 
\omega'(x,y)=-\lambda\langle T_{r+\tau(r)}^{-1}(x),y \rangle=\omega(x,y).\end{equation*}
Using Eqs.~(\ref{Nd2}) and (\ref{Nd6}), 
\begin{align*} 
P'(x)= T_{\tau(r)}{\omega^{'}}^{\sharp}(x)= T_{\tau(r)}\omega^{\sharp}(x)
&=-\lambda T_{\tau(r)}T_{r+\tau(r)}^{-1}(x)=\lambda (T_{r}-T_{r+\tau(r)})T_{r+\tau(r)}^{-1}(x)
\\&=\lambda T_{r}T_{r+\tau(r)}^{-1}(x)-\lambda (x)
=-T_{r}\omega^{\sharp}(x)-\lambda (x)=-P(x)-\lambda (x)
.\end{align*}
Thus, the factorizable 
anti-dendriform bialgebra $(A, \succ,\prec, \Delta_{\succ,\tau(r)},\Delta_{\prec,\tau(r)})$ generates a
quadratic Rota-Baxter anti-dendriform algebra $(A,\succ,\prec,-\lambda I-P,\omega)$ of non-zero weight $\lambda$.
Analogously, the converse part holds.
\end{proof}


\begin{center}{\textbf{Acknowledgments}}
\end{center}
This work was supported by the Natural Science
Foundation of Zhejiang Province of China (LY19A010001), the Science
and Technology Planning Project of Zhejiang Province
(2022C01118).

\begin{center} {\textbf{Statements and Declarations}}
\end{center}
 All datasets underlying the conclusions of the paper are available
to readers. No conflict of interest exits in the submission of this
manuscript.



\begin{thebibliography} {99}
	\bibitem[1] {1} M. Aguiar, On the associative analog of Lie bialgebras, J. Algebra 244 (2001), 492-532.

\bibitem [2] {3} C. Bai, Left-symmetric bialgebras and an analogue of the classical Yang–Baxter equation, Commun. Contemp.
Math. 10 (2008), 221-260.

\bibitem [3] {4} C. Bai, Double constructions of Frobenius algebras, Connes cocycles and their duality, J. Noncommut. Geom. 4 (2010), 475-530.

\bibitem [4] {5} C. Bai, L. Guo, Y. Sheng, Bialgebras, the classical Yang-Baxter equation and Manin triples for 3-Lie
algebras, Adv. Theor. Math. Phys. 23 (2019), 27-74

 \bibitem[5]{07} C. Bai, G. Liu, Y. Sheng, R. Tang, Quasi-triangular, factorizable Leibniz
bialgebras and relative Rota–Baxter operators, Forum Math. (2024).

\bibitem[6]{8} V. G. Drinfeld, Hamiltonian structures on Lie groups, Lie bialgebras and the geometric meaning of the classical
Yang–Baxter equations, Dokl. Akad. Nauk SSSR, 268 (2) (1983), 285–287 .
 

\bibitem [7] {15} D. Gao, G. Liu, C. Bai, Anti-dendriform algebras, new splitting of operations and Novikov-type algebras, J. Algebr Comb. 59 (2024), 661–696.

\bibitem[8]{16} Y. Hong, C. Bai, L. Guo, Infinite-dimensional Lie bialgebras via affinization of Novikov
bialgebras and Koszul duality, Commun. Math. Phys. 401 (2023), 2011-2049.
	
\bibitem [9] {19} B. Hou, Extending structures for perm algebras and perm bialgebras, J. Algebra 649 (2024), 392-432.

\bibitem [10] {022} H. Lang, Y. Sheng, Factorizable Lie bialgebras, quadratic Rota-Baxter Lie algebras and Rota-Baxter
Lie bialgebras, Commun. Math. Phys. 397 (2023), 763–791.
 
\bibitem [11] {22} Y. Li, Y. Hong, Quasi-Frobenius Novikov algebras and pre-Novikov bialgebras, Commun. Algebra. 53 (1) (2024), 308-327.

\bibitem [12] {24} G. Liu, C. Bai, A bialgebra theory for transposed Poisson algebras via anti-pre-Lie bialgebras and anti-pre-Lie Poisson bialgebras, 
Commun. Contemp. Math. 26 (2024), 2350050.

 \bibitem[13]{102} N. Reshetikhin, M. A. Semenov-Tian-Shansky, Quantum R-matrices and factorization problems, J. Geom. Phys. 5
(1988), 533-550.

\bibitem[14]{103} M. A. Semenov-Tian-Shansky, Integrable systems and factorization problems, Oper. Theory, Adv. Appl. 141 (2003),
155-218.

\bibitem[15]{029} Y. Sheng, Y. Wang, Quasi-triangular and factorizable antisymmetric
infinitesimal bialgebras, J. Algebra. 628 (2023), 415-433.

 \bibitem[16]{030} Q. Sun, X. Zeng, Extending structures for anti-dendriform algebras and anti-dendriform bialgebras, J. Algebra 682 (15) (2025), 723-770.
 
\bibitem [17] {28} R. Tang, Y. Sheng, Leibniz bialgebras, relative Rota-Baxter operators and the classical Leibniz Yang-Baxter
equation, J. Noncommut. Geom. 16 (2022), 1179-1211.

 \bibitem[18]{033} Y. Wang, C. Bai, J. Liu, Y. Sheng, Quasi-triangular pre-Lie bialgebras, factorizable pre-Lie
bialgebras and Rota-Baxter pre-Lie algebras, J. Geom. Phys. 199 (2024), 105146.

\bibitem[19]{32} V. N. Zhelyabin, Jordan bialgebras and their connections with Lie bialgebras, Algebra Logic 36 (1997), 1-15. 

 \bibitem[20]{33} V. N. Zhelyabin, Jordan bialgebras of symmetric elements and Lie bialgebras, Sib. Math. J. 39 (2) (1998), 261-276 .


 
\end{thebibliography}
\end {document}